\newtheorem{theorem}{Theorem}[section]
\newtheorem{lemma}[theorem]{Lemma}
\newtheorem{proposition}[theorem]{Proposition}
\newtheorem{corollary}[theorem]{Corollary}
\newtheorem*{rem}{Remark}
\numberwithin{equation}{section}
\newcommand{\be}{\begin{equation}}
\newcommand{\ee}{\end{equation}}
\newcommand{\bea}{\begin{eqnarray}}
\newcommand{\eea}{\end{eqnarray}}
\newcommand{\bean}{\begin{eqnarray*}}
\newcommand{\eean}{\end{eqnarray*}}
\newcommand{\bg}{\begin{gather}}
\newcommand{\eg}{\end{gather}}
\newcommand{\bgn}{\begin{gather*}}
\newcommand{\egn}{\end{gather*}}
\title{Orthogonal Polynomials, Asymptotics  and Heun Equations}
\author{Yang Chen$^a$\thanks{yayangchen@umac.mo},~
Galina Filipuk$^{b,}$\thanks{filipuk@mimuw.edu.pl},~Longjun Zhan$^{a,}$\thanks{Corresponding author: Zhan\_Longjun@foxmail.com}\\
\footnotesize{$^{a}$Department of Mathematics, University of Macau,}\\
\footnotesize{Avenida da Universidade, Taipa, Macau, China}\\
\footnotesize{$^{b}$Faculty of Mathematics, Informatics and Mechanics,}\\
\footnotesize{University of Warsaw, Banacha 2, Warsaw, 02-097, Poland}}
\date{}							% Activate to display a given date or no date
\begin{document}
\maketitle

\begin{abstract}
The Painlev\'{e} equations  arise from the study of Hankel determinants generated by moment matrices, whose
weights are expressed as the product of ``classical" weights multiplied by suitable ``deformation factors", usually dependent on a ``time variable'' $t$.
From ladder operators \cite{ChenIts2010,ChenIsmail2005,ChenIsmail1997,Magnus1995} one finds second order linear ordinary differential equations  for associated orthogonal polynomials with coefficients being rational functions.  The Painlev\'e and related functions appear as the residues of these rational functions.

We will be interested in the situation when $n$, the order of the Hankel matrix and also the degree of the  polynomials $P_n(x)$ orthogonal with respect to the deformed weights, gets large. We show  that the second order linear differential equations satisfied by $P_n(x)$ are particular cases of Heun equations when $n$ is large. In some sense,  monic orthogonal polynomials generated by deformed weights mentioned below are solutions of a variety of Heun equa\-tions. Heun equations are of considerable importance in mathematical physics and in the special cases they degenerate to the hypergeometric and confluent hypergeometric equations (see, for instance, \cite{Handbook,EL,HDE}).
%others Mathieu, Ince, Whittaker-Hill, Lame, Bessel, Legendre, Lagureee equations

In this paper we look at three type of weights:  the Jacobi type,
%, which are are supported $(0,1]$
the Laguerre type %supported on $[0,\infty)$
and the weights deformed by the indicator function of $(a,b)$ $\chi_{(a,b)}$    and the step function $\theta(x)$.

\vskip .2cm
\noindent
In particular, we consider the following Jacobi type weights:
\\
$1.1)\: x^\alpha(1-x)^\beta{\rm e}^{-t x},\;x\in[0,1],\;\alpha,\beta,t>0$;
\\
$1.2)\:x^\alpha(1-x)^\beta{\rm e}^{-t/ x},\; x\in(0,1],\; \alpha,\beta,t>0$;
\\
$1.3)\:(1-x^2)^\alpha(1-k^2x^2)^\beta,\;x\in[-1,1], \;\alpha,\beta>0,\;k^2\in(0,1);$
\vskip .2cm
\noindent
the Laguerre type weights:
\\
$2.1)\:x^\alpha(x+t)^\lambda{\rm e}^{-x},\; x\in[0,\infty),\;t,\alpha,\lambda  >0;$
\\
$2.2)\:x^\alpha {\rm e}^{-x-t/x},\;x\in(0,\infty),\alpha,t>0.$
\vskip .3cm

We also study another type of deformation when the classical weights are multiplied by $\chi$ or $\theta$:

\noindent
$3.1)\:{\rm e}^{-x^2}(1-\chi_{(-a,a)}(x)),\;x\in\mathbb{R},\;a>0;$

\noindent
$3.2)\:(1-x^2)^\alpha(1-\chi_{(-a,a)}(x)),\;x\in[-1,1],\; a\in(0,1),\;\alpha>0;$

\noindent
$3.3)\:x^\alpha{\rm e}^{-x}(A+B\theta(x-t)),\:\: x\in[0,\infty),\;\alpha,\,t>0,\;A\geq0, \;A+B\geq0.$

 The weights mentioned above were studied in a series of papers related to the deformation of ``classical" weights   \cite{EC,BasorChen,CCF,CD,ChenIts2010,LC2,LC1,MC2018,ZGCZ}.

%Heun equation may write as Hamiltonian structure, then from the Hamiltonian structure deduce the Painleve equation, namely, After asymptotic the %coefficients of second differential equation may write as Heun equation and Painleve equations. As we know the coefficients, $\alpha_n(t)$ and %$\beta_n(t)$, of three term recurrence relation satisfy Painleve equation. Obvious these two type of Painleve equations are not the same, as the %behaviours of asymptotic the recurrent coefficients represent as Taylor serious and the Painleve equation in terms with orthogonal polynomials and the %latter one in terms with the recurrence coefficients.

%We choose some classic cases to show the second order differential equation in terms with $P_n(z)$ were Heun class equations, also could written as %Painleve class equations, after asymptotic.

\end{abstract}

\section{Introduction}

\subsection{Heun equations}

The general Heun equation is the second order linear Fuchsian ordinary differential equation with four regular singular points in the complex plane \cite{Heun1889,HDE,S2000,Slee}. It is a generalization of the well-studied Gauss hypergeometric equation with three regular singularities. However, it is much harder to study properties of the Heun functions. The additional singularity causes many complications in comparison with the hypergeometric case (for instance, solutions do not have integral representation). There also exist confluent Heun equations, see \cite{HDE,S2000}, which are obtained by certain confluence of singularities of the    general Heun equation.

The general Heun equation is given by
\begin{equation}\label{Heun}
  \quad\frac{d^2y}{d z^2}+\left(\frac{\gamma}{z}+\frac{\delta}{z-1}+\frac{\epsilon}{z-a}\right)\frac{dy}{d z}+\frac{\alpha\beta z-q}{z(z-1)(z-a)}y=0,
\end{equation}
where the parameters satisfy the Fuchsian relation
\begin{equation}\label{Fuchr}
1+\alpha+\beta=\gamma+\delta+\epsilon.
\end{equation}
This equation has four regular singular points at $z=0,1,a$ and $\infty$. Its solutions, the Heun functions, are usually denoted by $y=H(a,q;\alpha,\beta,\gamma,\delta;z)$, where $\epsilon$ is expressed in term of $\alpha, \beta, \gamma, \delta$ via \eqref{Fuchr}. The parameter $q$ is called an accessory parameter.

It is well-known that the derivative of the hypergeometric function $_2F_1$ is again a hypergeometric function with different values of the parameters. However, for the Heun function it is generally not the case. The first order derivative of the general Heun function satisfies the second order Fuchsian differential equations with five regular singular points. It can be verified by direct computations that the function $v(z)=dy/dz$, where $y=y(z)$ is a solution of \eqref{Heun}, satisfies the following equation:
\begin{equation}\label{dHeun}
\frac{d^2v}{d z^2}+\left(\frac{\gamma+1}{z}+\frac{\delta+1}{z-1}+\frac{\epsilon+1}{z-a}-\frac{\alpha\beta}{\alpha\beta z-q}\right)\frac{dv}{d z}+\frac{f(z)}{z(z-1)(z-a)(\alpha\beta z-q)}v=0,
\end{equation}
where $f(z)=z(\alpha\beta z-2q)(\alpha\beta +\gamma+\delta+\epsilon)+q^2+q(\gamma+a(\gamma+\delta)+\epsilon)-\alpha\beta\gamma a$. We see from the equation above that an additional singularity at $z=q/(\alpha\beta)$ appear.

 There are four confluent limits of the general Heun equation: the confluent Heun, double confluent Heun, bi-confluent Heun and tri-confluent Heun equations.
 When the singularity $z=a$ is merged with $z=\infty$ the confluent Heun equation is found. Translating $z=1$ to $z=b$ then followed by $a\rightarrow\infty,\;\;b\rightarrow 0$ one finds the double confluent Heun equation. The bi-confluent Heun equation is obtained by $a\rightarrow\infty,\;\;b\rightarrow \infty.$ The tri-confluent Heun equation cannot be derived directly by confluence from the Heun equation in its standard form, we should be go back to the less specialized parametrization with singularities at $a_1,\;a_2,\;a_3$ and $\infty$, which is followed by $a_j\rightarrow\infty,\; j=1,2,3$.  These transformations are due to Heun (1889) (see\cite{HDE}) and they can be checked in  Maple\footnote{https://www.maplesoft.com/}. Other transformations can also be found in the literature, see for example, Slavyanov and Lay \cite{S2000}.

The list of Heun equations and equations for derivatives of the Heun functions is as follows.

{\bf
The confluent Heun equation} is given by
\begin{equation}\label{}
\frac{d^2y}{d z^2}+\left(\frac{\gamma}{z}+\frac{\delta}{z-1}+\epsilon\right)\frac{dy}{d z}+\frac{\alpha z-q}{z(z-1)}y = 0
\end{equation}
and the linear equation for the function $v=dy/dz$ is given by
\begin{equation}\label{dC}
\frac{d^2v}{d z^2}+\left(\frac{\gamma+1}{z}+\frac{\delta+1}{z-1}+\epsilon-\frac{\alpha}{\alpha z-q}\right)\frac{dv}{d z}+\frac{g(z)}{z(z-1)(\alpha z-q)}v = 0,
\end{equation}
where $g(z)=(\alpha+\epsilon)(\alpha z-2q )z+q^2-(\gamma+\delta-\epsilon)q+\alpha\gamma$.

{\bf The double-confluent Heun equation} is given by
\begin{equation}\label{}
 \frac{d^2y}{d z^2}+\left(\frac{\gamma}{z^2}+\frac{\delta}{z}+\epsilon\right)\frac{dy}{d z}+\frac{\alpha z -q}{z^2}y=0
\end{equation}
and the linear equation for the function $v=dy/dz$ is given by
\begin{equation}\label{dD}
\frac{d^2v}{d z^2}+\left(\frac{\gamma}{z}+\frac{\delta+2}{z-1}+\epsilon-\frac{\alpha}{\alpha z-q}\right)\frac{dv}{d z}+\frac{h(z)}{z^2(\alpha z-q)}v = 0,
\end{equation}
where $h(z)=(\alpha+\epsilon)(\alpha z-2q)z+q^2-\gamma q-\alpha\gamma$.

{\bf The bi-confluent Heun equation} is given by
\begin{equation}\label{}
 \frac{d^2y}{d z^2}+\left(\frac{\gamma}{z}+\delta+\epsilon z\right)\frac{dy}{d z}+\frac{\alpha z -q}{z}y=0
\end{equation}
and the linear equation for the function $v=dy/dz$ is given by
\begin{equation}\label{}
\frac{d^2v}{d z^2}+\left(\frac{\gamma+1}{z}+\delta+\epsilon z-\frac{\alpha}{\alpha z-q}\right)\frac{dv}{d z}+\frac{k(z)}{z(\alpha z-q)}v = 0,
\end{equation}
where $k(z)=(\alpha+\epsilon)(\alpha z-2q)z+q^2-\gamma q-\alpha\gamma$.

{\bf The tri-confluent Heun equation} is given by
\begin{equation}\label{}
 \frac{d^2y}{d z^2}+\left(\gamma+\delta z+\epsilon z^2\right)\frac{dy}{d z}+(\alpha z -q)y=0
\end{equation}
and the linear equation for the function $v=dy/dz$ is given by
\begin{equation}\label{}
\frac{d^2v}{d z^2}+\left(\gamma+\delta z+\epsilon z^2-\frac{\alpha}{\alpha z-q}\right)\frac{dv}{d z}+\frac{p(z)}{\alpha z-q}v = 0,
\end{equation}
where $p(z)=(\alpha+\epsilon)(\alpha z-2q)z+q^2-\gamma q-\alpha\gamma$.

Solutions of the Heun equation were given by Heun in 1889. In the 50 years following 1889 \cite{Heun1889},  solutions of Heun equations were obtained as power series in $z$.
Svartholm (1939) \cite{S1939} showed that solutions of the Heun equations may also be represented as series of hypergeometric functions.  This was further developed by Erd\'{e}lyi (1942--1944) \cite{E1942a}--\cite{E1944}.  Schmidt (1979) \cite{S1979} included the possibility of doubly infinite series of hypergeometric functions, similar to Laurent series. Kalnins and Miller (1991)  \cite{K1991} were concerned with the expansion of Heun polynomials based on group-theoretic methods and technique of separation of variables on the $n$-sphere and deduced the expansion of a product of two Heun polynomials  in terms of the product  of Jacobi polynomials. There are certain cases where the confluent Heun functions could  be expressed in terms of special functions of mathematical physics (see, for instance recent results by A. Ishkhanyan, e.g., \cite{Ish}).
  Hence, it is of considerable interest to construct solutions of the Heun equations. In this paper we will describe the orthogonal polynomials with respect to the deformed weights and show that for large $n$ the ordinary differential equations that they satisfy %$\sum _{n=0}^{\infty} \;c_n X^n$
 are Heun equations of various types.

\subsection{Orthogonal polynomial and ladder operators}

Let  $\{P_j(x)\}_{j=1}^{\infty}$  be a sequence of monic polynomials of degree $j$ orthogonal with respect to the weight $w(x)$ on the interval $[a,\,b]$, i.e.,
$$\int_a^bP_j(x)P_k(x)w(x){\rm d}x=h_j\delta_{j,k},\quad j,k=0,1,2,\cdots,$$
where $h_j$ denote the square of the weighted    $L^2$ norm of $P_j(x)$ over $[a,b]$. We write $$P_n(x)=x^n+p(n)x^{n-1}+\ldots.$$
It is known that
$$\prod _{1\leq i<j\leq N}(x_i-x_j)=\det\left(x_j^{i-1}\right)_{i,j=1}^{N}=\det\left(P_{i-1}(x_j)\right)_{i,j=1}^N .$$ The polynomials $P_n(x)$ can be constructed by the Gram-Schmidt orthogonalization process. Referring to the weights listed in the abstract, we see that the parameters $t,\, \alpha,\ldots$ will also appear in the polynomials and in the $L^2$ norm. However, to simplify notations we will not usually display all the dependence.

If  the moments of the deformed weight $\mu_j=\int_a^b x^j w(x) {\rm d} x$ exist, and $$\det (\mu_{i+j})_{0\leq i,j\leq n-1}\neq0,$$
then the theory of orthogonal polynomials states that monic orthogonal polynomials $P_n(z)$ for $n=0,1,2,\ldots$ satisfy the three-term recurrence relation
$$zP_n(z)=P_{n+1}(z)+\alpha_nP_n(z)+\beta_nP_{n-1}(z)$$ with $$P_0(z)=1,\quad\beta_0 P_{-1}(z)=0.$$
The  recurrence coefficients $\alpha_n$ and $\beta_n$ will depend on the parameters of the weight $t,\,\alpha $ etc.
 Note that the monic  polynomials $P_j(x)$ orthogonal with respect to weight $w(x)$ are defined on the real axis. However, they can be extended to the complex plane, hence,  we will use the variable  $z$  in $P_j(z)$.
For more detail about orthogonal polynomials see Szeg\"{o} \cite{Szego}.

Let $w(a)=w(b)=0$.  In \cite{ChenIsmail1997} it is shown that the following relations hold.

\begin{lemma}
Assume that  $v(x)=-\log w(x)$ has derivative in some Lipschitz class with a positive exponent. The lowering and raising operators (ladder operators) satisfy the following differential-difference formulas:
\begin{align}
  P'_n(z) &=-B_n(z)  P_n(z)+\beta_n A_n(z)P_{n-1}(z), \label{UP}\\
  P'_{n-1}(z) &=\left[B_n(z)+v'(z)\right]P_{n-1}(z)-A_{n-1}(z)P_n(z),\label{DOWN}
\end{align}
where
\begin{align}
&A_n(z):=\frac{1}{h_n}\int_a^b\frac{v^\prime(z)-v^\prime(y)}{z-y}P^2_n(y)w(y){\rm d}y,\label{A}\\
&B_n(z):=\frac{1}{h_{n-1}}\int_a^b\frac{v^\prime(z)-v^\prime(y)}{z-y}P_n(y)P_{n-1}(y)w(y){\rm d}y.\label{B}
\end{align}
\end{lemma}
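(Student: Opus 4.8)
The plan is to establish the lowering relation \eqref{UP} first and then obtain \eqref{DOWN} by the same mechanism. Since $P_n'(z)$ is a polynomial of degree $n-1$, I would expand it in the monic orthogonal basis, $P_n'(z)=\sum_{k=0}^{n-1}c_{n,k}P_k(z)$ with $c_{n,k}=h_k^{-1}\int_a^b P_n'(y)P_k(y)w(y)\,dy$. The first key step is integration by parts: the hypothesis $w(a)=w(b)=0$ kills the boundary term, and the identity $w'=-v'w$ (which is just $v=-\log w$) turns the coefficient into $c_{n,k}=h_k^{-1}\int_a^b P_n(y)P_k(y)v'(y)w(y)\,dy$, because the term $\int_a^b P_n P_k' w$ produced by the integration by parts vanishes by orthogonality ($\deg P_k'<n$).

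Next I would reassemble the sum. Substituting the coefficients and interchanging summation and integration produces the Christoffel--Darboux kernel $K_{n-1}(z,y)=\sum_{k=0}^{n-1}h_k^{-1}P_k(z)P_k(y)$, so that $P_n'(z)=\int_a^b P_n(y)v'(y)w(y)K_{n-1}(z,y)\,dy$. The decisive trick is to write $v'(y)=v'(z)-\bigl(v'(z)-v'(y)\bigr)$. The piece carrying the constant $v'(z)$ contributes $v'(z)\int_a^b P_n(y)K_{n-1}(z,y)w(y)\,dy=0$, by the reproducing property of $K_{n-1}$ together with $P_n\perp\{P_0,\dots,P_{n-1}\}$. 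For the remaining piece I would insert the explicit Christoffel--Darboux formula $K_{n-1}(z,y)=h_{n-1}^{-1}\bigl(P_n(z)P_{n-1}(y)-P_{n-1}(z)P_n(y)\bigr)/(z-y)$ and split the two terms; recognising the integrals \eqref{A}, \eqref{B} and using $\beta_n=h_n/h_{n-1}$ yields exactly $-B_n(z)P_n(z)+\beta_n A_n(z)P_{n-1}(z)$, which is \eqref{UP}.

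For \eqref{DOWN} I would run the identical argument with $P_{n-1}'$ in place of $P_n'$, arriving at $P_{n-1}'(z)=\int_a^b P_{n-1}(y)v'(y)w(y)K_{n-1}(z,y)\,dy$ (the would-be coefficient of $P_{n-1}$ drops out since $\int_a^b P_{n-1}^2 v' w=0$, itself a consequence of the same integration by parts). The only structural difference appears when isolating the $v'(z)$ term: now $\int_a^b P_{n-1}(y)K_{n-1}(z,y)w(y)\,dy=P_{n-1}(z)$, because $K_{n-1}$ reproduces the degree-$(n-1)$ polynomial $P_{n-1}$ exactly. This produces the extra summand $v'(z)P_{n-1}(z)$, while the divided-difference piece, split as before, gives $B_n(z)P_{n-1}(z)-A_{n-1}(z)P_n(z)$. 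Collecting terms delivers $\bigl(B_n(z)+v'(z)\bigr)P_{n-1}(z)-A_{n-1}(z)P_n(z)$, which is \eqref{DOWN}.

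The genuinely delicate point is analytic rather than algebraic. One must justify the integration by parts and the interchange of summation and integration: here $w(a)=w(b)=0$ is exactly what removes the boundary contributions, and the assumption that $v'$ belongs to a Lipschitz class with positive exponent is precisely what makes the divided difference $\bigl(v'(z)-v'(y)\bigr)/(z-y)$ bounded, hence integrable against $P_j^2 w$, so that $A_n(z)$ and $B_n(z)$ in \eqref{A}, \eqref{B} are well defined and every manipulation above is legitimate. I expect the verification of these regularity and integrability conditions, particularly near the endpoints $a$ and $b$, to be the only real obstacle; the rest is the bookkeeping sketched here.
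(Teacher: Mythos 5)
Your proposal is correct: the paper itself states this lemma without proof (citing Chen and Ismail \cite{ChenIsmail1997}), and your argument --- expanding $P_n'$ in the orthogonal basis, integrating by parts using $w(a)=w(b)=0$ and $w'=-v'w$, reassembling via the Christoffel--Darboux kernel, and splitting off $v'(z)$ --- is precisely the standard derivation in that reference, including the correct handling of the reproducing-kernel term that produces the extra $v'(z)P_{n-1}(z)$ in \eqref{DOWN}. Nothing is missing; the identification $\beta_n=h_n/h_{n-1}$ and the vanishing of $\int_a^b P_{n-1}^2\,v'w\,{\rm d}y$ are exactly the steps used there as well.
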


If $w(a)\neq0,\, w(b)\neq0$, then additional terms should be included in the definitions of $A_n(z)$ and $B_n(z)$  (See Chen and Ismail \cite{CI1997}, \cite{ChenIsmail2005}).
The variable $z$ shown in the    equations above   is complex; we assume that $v'(z)$ is an extension of $v'(x)$  off the real axis.

\begin{lemma}
The functions $A_n(z)$ and $B_n(z)$ defined by \eqref{A} and \eqref{B} satisfy the identities
\begin{align}
&B_{n+1}(z)+B_n(z)=(z-\alpha_n)A_n(z)-  v'(z),~~~~~~~~~~~~~~~~~~~~~~~~~~~~~~~~~~~~~(S_1)\notag\\
&1+  (z-\alpha_n)[B_{n+1}(z)-B_n(z)]=\beta_{n+1}A_{n+1}(z)-\beta_nA_{n-1}(z).~~\;~~~~~~~~~~~~~(S_2)\notag
\end{align}
\end{lemma}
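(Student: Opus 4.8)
The plan is to work directly from the integral representations \eqref{A}--\eqref{B}, exploiting the single algebraic identity $(z-y)\,\frac{v'(z)-v'(y)}{z-y}=v'(z)-v'(y)$ together with three structural facts: the orthogonality $\int_a^b P_jP_k w\,\mathrm{d}x=h_j\delta_{jk}$; the consequence $w'(x)=-v'(x)w(x)$ of $w=\mathrm{e}^{-v}$, which makes integration by parts available with vanishing boundary terms because $w(a)=w(b)=0$; and the normalization $\beta_n=h_n/h_{n-1}$. Throughout I write $K(z,y):=\frac{v'(z)-v'(y)}{z-y}$.

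First I would establish $(S_1)$. The quick route uses the preceding Lemma: \eqref{UP} at index $n$ and \eqref{DOWN} at index $n+1$ both express $P_n'(z)$, so equating them and inserting the recurrence in the form $P_{n+1}+\beta_nP_{n-1}=(z-\alpha_n)P_n$ gives $(S_1)$ after cancelling $P_n$. The route I would actually record, because it generalizes, is the integral one: substituting $P_{n+1}=(y-\alpha_n)P_n-\beta_nP_{n-1}$ into $B_{n+1}$ collapses $B_{n+1}+B_n$ to the single integral $\frac{1}{h_n}\int_a^b K(z,y)(y-\alpha_n)P_n^2\,w\,\mathrm{d}y$; splitting $(y-\alpha_n)=(y-z)+(z-\alpha_n)$ leaves $(z-\alpha_n)A_n$ and turns the first piece into $\frac{1}{h_n}\int_a^b(v'(y)-v'(z))P_n^2 w\,\mathrm{d}y$, whose $v'(z)$-part is $-v'(z)$ by normalization and whose $\int v'(y)P_n^2w$-part vanishes after one integration by parts (the boundary term dies and $\int P_nP_n'w=0$ by orthogonality). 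This gives $B_{n+1}+B_n=(z-\alpha_n)A_n-v'(z)$.

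For $(S_2)$ I would compute $(z-\alpha_n)B_{n+1}$ and $(z-\alpha_n)B_n$ separately, pulling the $z$-independent factor inside the integral and writing $(z-\alpha_n)=(z-y)+(y-\alpha_n)$. In each case the $(z-y)$ piece becomes $v'(z)-v'(y)$, kills the $v'(z)$-term by orthogonality, and after integration by parts produces an integer: $-(n+1)$ for $B_{n+1}$ and $-n$ for $B_n$, coming from $\int P_{n+1}'P_nw=(n+1)h_n$ and $\int P_n'P_{n-1}w=nh_{n-1}$. The $(y-\alpha_n)$ piece is handled by $(y-\alpha_n)P_n=P_{n+1}+\beta_nP_{n-1}$: for $B_{n+1}$ it yields $\beta_{n+1}A_{n+1}$ plus a cross term $\frac{\beta_n}{h_n}\int_a^b KP_{n+1}P_{n-1}w\,\mathrm{d}y$, and for $B_n$ it yields $\beta_nA_{n-1}$ plus $\frac{1}{h_{n-1}}\int_a^b KP_{n+1}P_{n-1}w\,\mathrm{d}y$. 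Subtracting, the two integers combine to $-1$ and the two cross terms cancel exactly because $\beta_n/h_n=1/h_{n-1}$, leaving $(z-\alpha_n)(B_{n+1}-B_n)=-1+\beta_{n+1}A_{n+1}-\beta_nA_{n-1}$, i.e. $(S_2)$.

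The main obstacle lies genuinely in $(S_2)$, not $(S_1)$: whereas $(S_1)$ already follows from the ladder relations alone, $(S_2)$ is \emph{not} a formal consequence of them and forces one back to the orthogonality structure. The two delicate points are (i) extracting the integration-by-parts integers $n$ and $n+1$ correctly so their difference is precisely $-1$, and (ii) the cancellation of the cross integral $\int_a^b KP_{n+1}P_{n-1}w\,\mathrm{d}y$, which would otherwise spoil the identity and survives only thanks to $\beta_n=h_n/h_{n-1}$. I would also keep the analytic hypotheses in view: the Lipschitz condition on $v'$ ensures that $K(z,y)$ is a genuine finite kernel, and $w(a)=w(b)=0$ is exactly what makes every boundary term in the integrations by parts vanish.
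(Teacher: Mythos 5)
Your proof is correct, and the first thing to say is that the paper itself offers no proof of this lemma: $(S_1)$ and $(S_2)$ are stated as known facts from the ladder-operator literature (Chen--Ismail \cite{ChenIsmail1997,ChenIsmail2005}, Basor--Chen \cite{BC2009}, Chen--Its \cite{ChenIts2010}), so there is no in-paper argument to compare against. Your derivation is a valid, self-contained version of the standard one in those references, and the delicate points all check out: the collapse of $B_{n+1}+B_n$ to $\frac{1}{h_n}\int_a^b K(z,y)(y-\alpha_n)P_n^2(y)w(y)\,{\rm d}y$ using $\beta_n/h_n=1/h_{n-1}$; the splitting $(y-\alpha_n)=(y-z)+(z-\alpha_n)$ with $\int_a^b v'(y)P_n^2(y)w(y)\,{\rm d}y=0$ after one integration by parts (boundary terms vanish since $w(a)=w(b)=0$); and, in $(S_2)$, the integers $-(n+1)$ and $-n$ coming from $\int_a^b P_{n+1}'P_nw\,{\rm d}y=(n+1)h_n$ and $\int_a^b P_n'P_{n-1}w\,{\rm d}y=nh_{n-1}$, together with the exact cancellation of the two cross terms involving $\int_a^b K P_{n+1}P_{n-1}w\,{\rm d}y$ thanks to $\beta_n=h_n/h_{n-1}$.

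One correction to your closing commentary, however: $(S_2)$ \emph{is} a formal consequence of the ladder relations once the three-term recurrence is granted. Differentiate $zP_n=P_{n+1}+\alpha_nP_n+\beta_nP_{n-1}$ to get $P_n+(z-\alpha_n)P_n'=P_{n+1}'+\beta_nP_{n-1}'$, substitute \eqref{UP} at indices $n$ and $n+1$ and \eqref{DOWN} at index $n$, and eliminate $P_{n+1}$ via the recurrence. The result is
\begin{equation*}
P_n\Bigl[1+(z-\alpha_n)\bigl(B_{n+1}-B_n\bigr)-\beta_{n+1}A_{n+1}+\beta_nA_{n-1}\Bigr]
+\beta_nP_{n-1}\Bigl[(z-\alpha_n)A_n-B_{n+1}-B_n-v'\Bigr]=0 .
\end{equation*}
The second bracket vanishes by $(S_1)$, so the first bracket multiplied by $P_n$ vanishes for all $z$ and hence vanishes identically, which is precisely $(S_2)$. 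Both routes are rigorous; yours has the merit of exposing exactly where orthogonality, the vanishing of the boundary terms, and the normalization $\beta_n=h_n/h_{n-1}$ enter, while the formal route explains why $(S_1)$ and $(S_2)$ are usually called compatibility conditions for the ladder operators rather than additional facts about the weight.
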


It turns out that there is another supplementary condition involving $\sum _{j=0}^{n-1} A_j(z)$, we will call it  $(S_2')$, which is widely used in the determination of recurrence coefficients $\alpha_n$ and $\beta_n$,
\begin{equation}\label{}
B_n(z)^2+v'(z)B_n(z)+\sum_{j=0}^{n-1}A_j(z)=\beta_nA_n(z)A_{n-1}(z).~~~~~~~~~~~~~~~~~~~~~~~~~~(S_2')\notag
\end{equation}
Equation $(S_2')$ should be thought of as an equation for  $\sum_{j=0}^{n-1}A_j(z)$. See, for example, Basor, Chen \cite{BC2009} and Chen, Its \cite{ChenIts2010}.

Eliminating $P_{n-1}(z)$ from ladder operators and we obtain the second order linear ordinary differential equation satisfied by $P_n(z)$
\begin{equation}\label{H}
P_n''(z)-\left( v'(z)+\frac{A'_n(z)}{A_n(z)}\right) P_n'(z)+\left(
B'_n(z)-B_n(z)\frac{A'_n(z)}{A_n(z)}+\sum_{j=0}^{n-1}A_j(z)\right)P_n(z)=0,
\end{equation}
 where $\sum_{j=0}^{n-1}A_j(z)$ is obtained from $(S'_2)$.

In the following sections we will discuss the second order linear ordinary differential equations for large $n$, in the context of the deformed weights.

\subsection{Coulomb fluid method}

In this section we treat the joint distribution function of the eigenvalues of the Hermitian ensembles as points of a fluid described by a continuous density $\rho(x) {\rm d}x$. We first present some basic description of the Coulomb fluid method, mainly from \cite{CL1998,CI1997}.
The quanlity

\begin{equation*}
  E(x_1,x_2,\cdots,x_n)=-2\sum_{1\leq j<k\leq n}{\rm ln}\mid x_j-x_k\mid+\sum_{j=1}^n v(x_j),
\end{equation*}
is
%We will also present an alternative characterization via linear statistics results presented in \cite{CL1998}, which were derived in terms of the Coulomb  fluid method. The advantage of this approach is obvious. One is just required to do  straightforward and direct derivations,  and one is obtained simpler expressions than via the ladder operator approach. Firstly, we present some basic description of the Coulomb fluid, mainly from \cite{CL1998,CI1997}. The quantity
%
%\begin{equation*}
 % E(x_1,x_2,\ldots,x_n)=-2\sum_{1\leq j<k\leq n}{\rm ln}\mid x_j-x_k\mid+\sum_{j=1}^n v(x_j),
%\end{equation*}
%from statistical mechanics, is
the total energy of a system of $n$  logarithmical repelling particles in one dimension subject to an external potential $v(x)$. The particles can be approximated as a continuous fluid with a density $\rho$, for sufficiently large $n$. This density $\rho(x)$ assumed to be supported on $[a,b]$ will correspond to the equilibrium density of the fluid, this is obtained by the constrained minimization
$$\min_{ \rho>0} F[\rho] \quad \text{subject to}\quad \int_a^b \rho(x){\rm d}x=n,$$
where the free-energy function reads
\begin{equation*}
  F[\rho]:=\int_a^b\rho(x)v(x){\rm d}x-\int_a^b\int_a^b\rho(x) {\rm ln} |x-y|\rho(y){\rm d}x{\rm d}y.
\end{equation*}
The equilibrium density satisfies the following integral equation (see the Frostman Lemma \cite{T1959}):
$$v(x)-2\int_a^b{\rm ln}|x-y|\rho(y){\rm d}y=A,\qquad x\in[a,b],$$
where $A$ is the Lagrange multiplier   that fixes the constraint $\int_a^b \rho(x){\rm d}x=n$. For more detail see \cite{CI1997}. After taking a derivative with respect to $x$ one obtains a singular integral equation,
\begin{equation}\label{Singular1}
2 P\int_a^b\frac{\rho(y)}{x-y}{\rm d}y=v'(x),
\end{equation}
where $P$ denotes the Cauchy principal value. According to the standard theory of singular integral equations \cite{FD1990,NI2013}, if $\rho(a)=\rho(b)=0$, then the density supported on $[a,b]$ reads
\begin{equation}\label{sigma}
\rho(x)=\frac{\sqrt{(b-x)(x-a)}}{2\pi^2}P\int_a^b {\rm d}y\frac{v'(y)}{(y-x)\sqrt{(b-y)(y-a)}}.
\end{equation}

The endpoints of the interval $[a,b]$ satisfy the condition $\int_a^b \rho(x){\rm d}x=n$, as well as  stability conditions
 \begin{equation}\label{supp2}
\begin{split}
   \int_a^b  \frac{xv'(x)}{\sqrt{(b-x)(x-a)}}{\rm d}x&=2n\pi,\\
  \int_a^b  \frac{v'(x)}{\sqrt{(b-x)(x-a)}}{\rm d}x&=0 .
  \end{split}
\end{equation}
The end points of the support of the density are the solutions of \eqref{supp2} and are denoted by $a(n,t)$ and $b(n,t)$. They  depend on the independent variables $n$, $t$, which play an important role in the asymptotics of the recurrence coefficients $\alpha_n(t)$ and $\beta_n(t)$, with
  {blue}\begin{align}
\alpha_n(t)&\sim\frac{a(n,t)+b(n,t)}{2},\label{asym1}\\
\beta_n(t)&\sim\left(\frac{b(n,t)-a(n,t)}{4}\right)^2,\label{asym2}
\end{align}
see \cite{CI1997, MNT1987}.

\subsection{The structure of this paper}

The second order linear differential equations satisfied by $P_n(z)$  related to several weight functions in the abstract, \eqref{H}, have coefficients that are rational functions of $z$, whose poles and residues depend on $r_n(t)$ and $R_n(t)$. Here $t$ is a ``time parameter". It was found that $r_n(t)$ and $R_n(t)$ are evaluated as the ``matrix elements" involving $P_n^2(x,t)$ and $P_n(x,t)P_{n-1}(x,t)$.
 In the ladder operators \eqref{UP} and \eqref{DOWN}, with the weights given in the abstract, the functions $A_n(z)$ and $B_n(z)$ are rational function of $z$. Conditions $(S_1)$, $(S_2)$ and $(S'_2)$ are used to obtain  relations for recurrence coefficients $\alpha_n$, $\beta_n$ and auxiliary quantities $R_n$ and $r_n$.
 In particular, % going through the steps of the method of ladder operators
 one finds that the recurrence coefficients $\alpha_n(t)$ and $\beta_n(t)$ are expressed in terms of the auxiliary variables $r_n(t)$ and $R_n(t)$, which typically satisfy the coupled Riccati equations. Eliminating $r_n(t)$ gives a nonlinear second order ordinary differential equation for the function $R_n(t)$, which turns out to be equivalent   (possibly after some change of variables or scaling) to one of  the classical Painlev\'{e} equations.  %   appear when $r_n(t)$  is eliminated from these coupled Riccati equations.

We show that in the situation where $n$ tends to $\infty$, the linear second order ordinary differential equations \eqref{H} turn out to be Heun equations.
 The large $n$ behaviour of $R_n(t)$ is found by using the non-derivative part of the equations  satisfied by $R_n(t)$. From this approximation, we obtain the behaviour of $r_n(t)$ and $R_n(t)$ under suitable double scaling and finally compute the recurrence coefficients, $\alpha_n$, $\beta_n$. We see that the behaviour of the recurrence coefficients obtained by this method is accurate and compare very well with the behaviour of recurrence coefficients obtained from   \eqref{supp2}, \eqref{asym1} and \eqref{asym2}.

%We study  second order linear differential equations \eqref{H} with rational functions as  coefficients  satisfied by  monic orthogonal polynomial $P_n(z)$  related to several weight functions. When $n$ tends to infinity, these equations   simplify to the classical Heun equations. Ladder operators and conditions $(S_1)$, $(S_2)$ and $(S'_2)$ are used to obtain  relations for recurrence coefficients $\alpha_n$, $\beta_n$ and auxiliary quantities $R_n$ and $r_n$. The first one, $R_n$, is proportional to  the integral over $[a, b]$ of the product of two polynomials of degree $n$ multiplied by the weight function divided by low degree polynomial in  $x$, the second one, $r_n$, is proportional to the  integral over $[a, b]$ of the product of  polynomials of degrees $n$ and $n-1$ multiplied by the weight function divided by low degree polynomial in  $x$,  see the following sections for different   weights respectively. The main method to find the asymptotic expression of $R_n$ is to use  the second order linear {\bf !!! nonlinear???} ordinary differential equation for  $R_n$. As mentioned above, the use of two integral conditions \eqref{supp2} is another approach to obtain asymptotic expression of $R_n$. Hence, we can further confirm the accuracy of our estimate by comparing the results obtained by these two approaches.

This paper is organized as follows. In Sections 2--4 we study the deformed Jacobi type weights, deformed Laguerre type weights and weights with gaps respectively. We write the  second order linear ordinary differential equations satisfied by orthogonal polynomials $P_n(z)$, which are usually known from the corresponding literature.  Then we deduce the Heun equations via some approximation  procedure.  %In Section 5, we represent the Heun equations as Painlev\'{e} equations and exhibit their Hamiltonian structure. We mainly consider the generated Heun equation, the others can be obtained in a similar way. In Section 6, we show some integral formulas used in the article.
The main results of the paper are summarized in the following table.

{\clearpage
 \thispagestyle{empty}
     \begin{landscape}% Landscape page
     \captionof{table}{\footnotesize Equations with respect to weight $w(x)$, when $n$ goes to infinity.}
    \label{}
     \setlength{\tabcolsep}{0.02pt}

       \renewcommand{\arraystretch}{1.8}

    {\scriptsize \begin{longtable}[l]{c|c}
    \toprule
     WEIGHT&EQUATION\cr
    \midrule
    \multirow{2}{*}
    { Sec 2.1: $ x^\alpha(1-x)^\beta{\rm e}^{-t x},\;x\in[0,1],\;\alpha,\beta,t>0.$}

       &{  $n\rightarrow\infty$, $t\rightarrow0^+$, $T=n t$, confluent Heun equation.
}\cr

\cline{2-2}
                  &{$
  P_n''(u)+\left(\frac{\alpha+1}{u}+\frac{\beta+1}{u-1}-\frac{T}{n}\right)P_n'(u)+\frac{T u-T/2-n(n+\alpha+\beta+1)}{u(u-1)}P_n(u)=0$}\cr
     \midrule
    \multirow{2}{*}
    {Sec 2.2: $x^\alpha(1-x)^\beta{\rm e}^{-t/ x},\, x\in(0,1],\;\alpha,\beta, t>0.$}     &{ $n\rightarrow\infty$, $t\rightarrow0^+$,  $T=2n^2 t$ small, confluent Heun equation.}\cr
\cline{2-2}
                  &$
  P_n''(u)+\left(\frac{2\lambda-\alpha-\beta }{u}+\frac{\beta+1}{u-1}-s\right)P_n'(u) +\frac{-s\lambda u+\lambda(\lambda+s-\alpha)+a}{u(u-1)}P_n(u)=0$\cr
    \midrule
    \multirow{3}{*}
    { Sec 2.3: $(1-x^2)^\alpha(1-k^2x^2)^\beta, x\in[-1,1],\; \alpha,\beta>0,\;k^2\in(0,1).$}     &{ $k^2\rightarrow0^+,\; \beta\rightarrow\infty,\;n\to\infty, \,k^2\beta=t \text{ is fixed}$, confluent Heun equation. }\cr
\cline{2-2}
                  &$
  P''_n(u)+\left(\frac{1}{2u}+\frac{\alpha +1}{u-1}-t\right)P'_n(u)+\frac{2ntu-n(n+2\alpha+1+t)}{4u(u-1)}P_n(u)=0.$\cr
    \midrule
     \multirow{2}{*}
    {Sec 3.1: $  x^\alpha(x+t)^\lambda{\rm e}^{-x}, x\in[0,\infty),\;t,\alpha,\lambda>0.$}     &{ $n\rightarrow\infty$, confluent Heun equation.}\cr
\cline{2-2}
                  &$
  P_n''(u)+\Big(\frac{\alpha+1}{u}+\frac{2\eta+\lambda}{u-1}+t\Big)P_n'(u) +\left(\frac{4t(\eta-n)u+(2 \eta +\lambda ) (2 (\alpha +\eta )+\lambda )-4 \lambda  \sqrt{nt}+4 n t+2 \lambda  t }{4 u (u-1)}\right)P_n(u)=0.$\cr
    \midrule
     \multirow{4}{*}
    {Sec 3.2: $x^\alpha {\rm e}^{-x-t/x}, x\in(0,\infty),\alpha,t>0.$}
    &{$n\rightarrow\infty, t\rightarrow 0^+$, $s=(2n+\alpha+1)t$ fixed, double confluent Heun equation.}\cr
\cline{2-2}
                  &For large $s$, $
  P_n''(u)+\Big(\frac{\gamma}{u^2}+\frac{\delta}{u}+\epsilon\Big)P_n'(u) +\frac{au-q}{ u^2}P_n(u)=0$.\cr
\cline{2-2}
                  &For small $s$, $
                  P_n''(u)+\Big(\frac{s(1+\alpha)}{2\alpha n u^2}+\frac{\alpha+1}{u}-1\Big)P_n'(u) +\frac{n u+s/2\alpha}  {u^2}P_n(u)=0$.\cr
    \midrule
    \multirow{2}{*}
    {Sec 4.1: ${\rm e}^{-x^2}(1-\chi_{(-a,a)}(x)),\;x\in\mathbb{R},\;a>0.$ }   &{ $n\rightarrow\infty$, confluent Heun equation.}\cr
\cline{2-2}
                  &$
  P_n''(u)+\Big(\frac{1} {u-1}-\frac{1}{2u}-t\Big)P_n'(u)+\frac{2ntu+\sqrt{2nt}}{4u(u-1)}P_n(u)=0.$\cr
   \midrule
    \multirow{2}{*}
    {Sec 4.2:  $(1-x^2)^\alpha(1-\chi_{(-a,a)}(x)), \;x\in[-1,1],\; a\in(0,1),\;\alpha>0.$}     &{ $n\rightarrow\infty$, general Heun equation.}\cr
\cline{2-2}
                  &$P_n''(u)+\left(-\frac{1}{2u}+\frac{\alpha +1}{u-1}+\frac{1}{u-t}\right)P_n'(u)-\frac{n \sqrt t+u (2 \alpha +n+1)n}{4u\left(u-1\right) \left(u-t\right)}P_n(u)=0.$\cr
    \midrule
    \multirow{4}{*}
   {Sec 4.3: $   x^\alpha{\rm e}^{-x}(A+B\theta(x-t)), x\in[0,\infty),\;\alpha, t>0,\; A\geq 0, \;A+B\geq0.$}     &{   For $A=0,\;B=1$, $n\rightarrow\infty$, $s=4n t$, for large $s$, obtain double confluent Heun equation.}\cr
\cline{2-2}
                  &$
  P_n''(u)+\Big(\frac{s-\alpha\sqrt s}{4nu^2}+\frac{\alpha+1}{u}-1\Big)P_n'(u)+\frac{4n u-2\alpha\sqrt s+s+\alpha^2}{4u^2}P_n(u)=0.$\cr
  \cline{2-2}
  &{   For $A=-1,\;B=1$, $n\rightarrow\infty$, confluent Heun equation.}\cr
\cline{2-2}
                  & $
   P_n''(u)+\Big(\frac{\alpha +1}{u}+\frac{1}{u-1}-t\Big)P_n'(u)+\frac{ntu- n( n+\alpha+1+t/2 )}{u (u-1) }P_n(u)=0.$\cr
    \bottomrule
        \end{longtable}}
    \end{landscape}
}

\section{Jacobi type weights}\label{SectionJ1}
In this section we consider three deformed Jacobi type weights: $x^\alpha(1-x)^\beta{\rm e}^{-t x}$ (see \cite{ZGCZ}), $x^\alpha(1-x)^\beta{\rm e}^{-t/ x}$ (see \cite{CCF,CD}) and $(1-x^2)^\alpha(1-k^2x^2)^\beta$ (see \cite{BasorChen}). The properties of  polynomials orthogonal  with respect to these weight and of their recurrence coefficientss were studied in corresponding papers. Moreover, it was shown there that the auxiliary quantities $R_n(t),\;r_n(t)$, closely  related to the  recurrence coefficients $\alpha_n(t),\;\beta_n(t)$, satisfy certain  Painlev\'{e} equations and Jimbo-Miwa-Okamoto $\sigma$-forms of the  Painlev\'{e} equations.
We show that  monic orthogonal polynomial $P_n(z)$ for the weights above satisfy  particular confluent Heun equations with parameters related to the parameters in the weight  as  $n$ goes to infinity. %They are not confluent hypergeometric or hypergeometric, it turns out some of them are general confluent Heun equation, single confluent Heun or double confluent Heun equation.
We will study these Jacobi type weights in the following three subsections.

\subsection{$x^\alpha(1-x)^\beta{\rm e}^{-t x},\; x\in[0,1],\;\alpha,\beta,t>0$ }

In \cite{ZGCZ} the probability density function of the center of mass $\mathbb{P}(c,\alpha,\beta,n)$ was studied. The second order linear differential equation satisfied by  monic polynomials orthogonal  with respect to $x^\alpha(1-x)^\beta{\rm e}^{-t x}$  is the Fuchsian equation with four singular points given by
\begin{align}\label{ZP1}
P_n''(z)&+Q_n(z,t)P_n'(z) +S_n(z,t)P_n(z)=0,
\end{align}
where
\begin{align*}
 Q_n(z,t) &=\frac{\alpha+1}{z}+\frac{\beta+1}{z-1}-t-\frac{1}{z-R_n(t)/t}, \\
  S_n(z,t)&=\frac{nt z-n-\sum_{j=0}^{n-1}R_j(t)}{z(z-1)}+\frac{n z+r_n(t)}{z(z-1)(z-R_n(t)/t)}
\end{align*}
with $R_n(t)$, $r_n(t)$ and $\beta_n(t)$ defined in \cite{ZGCZ}. They satisfy the following relations:
\begin{align}
r_n(t)=&\frac{1}{2t}\left[t R_n'(t)+\alpha t-(2n+1+\alpha+\beta+t)R_n(t)+R_n^2(t)\right] \label{Zr},\\
\sum_{j=0}^{n-1}R_j(t)=&n (\alpha +\beta +n)-t r_n(t)-t^2 \beta _n(t),\label{ZSR}
\end{align}
where
\begin{equation}
\beta_n(t)=\frac{n (\beta +n)+tr_n(t) \left[r_n(t)-\alpha  \right]/R_n(t)+(\alpha +\beta +2 n) r_n(t)}{t \left(t-R_n(t)\right)}.\label{Zb}
\end{equation}

In order to further study the asymptotic expression of the second order differential equation \eqref{ZP1}, we will first find the asymptotic expression of $R_n(t)$, see below Proposition~\ref{Zpropo2}.  For convenience of the reader we will use hollow symbols to define the new variable functions, such as $\mathbb{R}_n(T):=R_n(\frac{T}{n})$ and $\mathbb{X}(T):=X(\frac{T}{n})$.

\begin{proposition}\label{Zpropo2}
When $n\rightarrow\infty$, $t\rightarrow0^+$ and $T=t n$ is fixed,
\begin{align}
\mathbb{R}_n(T)=&2n+\alpha +\beta +1+\frac{ T}{2n}+\frac{T \left(\beta ^2-\alpha ^2\right)}{8n^3}+\frac{T (\alpha^2 -\beta^2 )  (\alpha +\beta +1)}{8 n^4}\notag \\
&+\frac{ \left(2 \alpha ^2+2 \beta ^2-1\right)T^2-(\alpha^2 -\beta^2 )\left(3 (\alpha+\beta) ^2+6( \alpha  + \beta) +4\right)T}{32 n^5}\notag\\&+\mathcal{O}\left(\frac{1}{n^4}\right). \label{ZR}
\end{align}

\begin{proof}
The auxiliary quantity
$$R_n(t):=\frac{\alpha}{h_n}\int_0^1\frac{P_n^2(x)}{x}x^\alpha(1-x)^\beta {\rm e}^{-t x}{\rm d}x$$
satisfies the second order nonlinear differential equation
\begin{align}\label{nonlinear eq Rn section 2.1}
R''_n=&\frac{1}{2t^2(R_n-t)R_n}\big\{(2R_n-t )(t R'_n)^2-2t R_n^2R'_n+2R_n^5-2\alpha^2t^2R_n+\alpha^2t^3 \notag \\
&-[2(2n+1+\alpha+\beta)+5t]R_n^4+4t(2n+1+\alpha+\beta+t)R_n^3\notag\\
&-[t^3+2 t^{2}(2n+1+\alpha+\beta)-t(1+\alpha^2-\beta^2)]R_n^2\big\},
\end{align}
 see \cite{BC2010,ZGCZ}, and it can further be reduced to the fifth Painlev\'e equation.

When $n$ (the dimension of the Hankel determinant) tends to infinity, $t$ tends to zero and the product of $n$ and $t$ is fixed $nt=T$, the function $\mathbb{R}_n(T)$ satisfies the following equation:
\begin{align}\label{tR}
n^3\frac{ \mathbb{R}_n{}^2}{T^2}&+n^2 \frac{\mathbb{R}_n\left(\alpha \mathbb{R}_n+\beta  \mathbb{R}_n-\mathbb{R}_n{}^2+\mathbb{R}_n-2 T \right)}{2 T^2}\notag \\
&-n \frac{\alpha ^2-\beta ^2+1-3 \mathbb{R}_n{}^2+2 (\alpha+\beta+1)  \mathbb{R}_n}{4 T }\notag \\
&+n^2 \frac{T\mathbb{R}_n \mathbb{R}_n''-T \mathbb{R}_n'{}^2+ \mathbb{R}_n \mathbb{R}_n'}{2 T \mathbb{R}_n}-n\frac{T \mathbb{R}_n'\left(T \mathbb{R}_n'-2  \mathbb{R}_n \right)}{4 T \mathbb{R}_n{}^2}\notag \\
&-\frac{T^2 \mathbb{R}_n'{}^2-2 T \mathbb{R}_n \mathbb{R}_n'+\mathbb{R}_n{}^2(\mathbb{R}_n{}^2-\alpha ^2-\beta ^2 +1)}{4 \mathbb{R}_n{}^3}\notag \\
   &+\frac{T(2 T \mathbb{R}_n \mathbb{R}_n'-T^2\mathbb{R}_n'{}^2+\beta ^2\mathbb{R}_n{}^2- \mathbb{R}_n{}^2)}{4 \mathbb{R}_n{}^3 \left(n \mathbb{R}_n-T\right)}=0.
\end{align}
Disregarding the derivative parts of \eqref{tR} and considering the first two terms with $n^3$ and $n^2$ yields
\begin{align}\label{}
\widehat{\mathbb{R}}_n(T)= \frac{1}{2}\left(2 n+1+\alpha +\beta \pm\sqrt{(\alpha +\beta +2 n+1)^2-8 T}\right).
\end{align}
Expanding into the  Taylor series as $n\rightarrow\infty$, we obtain two expressions of $\mathbb{R}_n(T)$:
\begin{align*}
\widehat{\mathbb{R}}_n(T)_1&=2n+\alpha +\beta +1-\frac{ T}{n}+\frac{(\alpha +\beta +1)T}{2n^2}+\mathcal{O}\left(\frac{1}{n^3}\right),\\
\widehat{\mathbb{R}}_n(T)_2&=\frac{T}{n}-\frac{(\alpha +\beta +1)T}{2 n^2}+\mathcal{O}\left(\frac{1}{n^3}\right).
\end{align*}
Next we assume that $\mathbb{R}_n(T)$ has the following expansion:
\begin{equation*}
  \mathbb{R}_n(T)=\sum_{j=0}^{\infty}a_j(T)n^{1-j},\quad n\rightarrow\infty.
\end{equation*}
Substituting the expression above into \eqref{tR} gives us \eqref{ZR} by comparing the corresponding coefficients on both sides.
\end{proof}
\end{proposition}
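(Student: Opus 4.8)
The plan is to treat the double-scaling limit $n\to\infty$, $t\to0^{+}$, $T=nt$ fixed as a singular-perturbation problem for the known second-order nonlinear ODE satisfied by $R_n(t)$ (derived in \cite{ZGCZ,BC2010} and reducible to a Painlev\'e~V equation). First I would pass to the scaled unknown $\mathbb{R}_n(T):=R_n(T/n)$, so that $t=T/n$, $R_n'(t)=n\,\mathbb{R}_n'(T)$ and $R_n''(t)=n^{2}\,\mathbb{R}_n''(T)$, and substitute these into the nonlinear ODE. Collecting the outcome as a Laurent polynomial in $n$ whose coefficients are rational in $\mathbb{R}_n$, $\mathbb{R}_n'$, $\mathbb{R}_n''$ and $T$ produces a single master equation for $\mathbb{R}_n$. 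This step is mechanical, but it is where most of the bookkeeping risk lies: the explicit powers of $t=T/n$ already present in the ODE interleave with the powers of $n$ coming from the chain rule, and each power must be tracked exactly for the later coefficient-matching to close.

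Next I would extract the dominant algebraic balance. Anticipating $\mathbb{R}_n=O(n)$, one checks that the two highest non-derivative contributions are of order $n^{5}$ and dominate both the remaining algebraic terms and all of the derivative terms (the latter enter only at order $n$ or below). Setting their sum to zero, cancelling the common factor, and simplifying gives the quadratic
\begin{equation*}
\mathbb{R}_n^{2}-(2n+\alpha+\beta+1)\,\mathbb{R}_n+2T=0,
\end{equation*}
with roots $\tfrac12\big(2n+\alpha+\beta+1\pm\sqrt{(2n+\alpha+\beta+1)^{2}-8T}\big)$. Expanding each root for large $n$ and demanding agreement with the classical value $\mathbb{R}_n(0)=2n+\alpha+\beta+1$ at $T=0$ selects the ``$+$'' branch; the ``$-$'' branch, which tends to $0$, is spurious. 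This pins down the first two coefficients, $2n$ and $\alpha+\beta+1$, of the expansion.

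Finally I would insert the full ansatz $\mathbb{R}_n(T)=\sum_{j\ge0}a_j(T)\,n^{1-j}$ into the master equation, differentiating termwise in $T$ to supply $\mathbb{R}_n'$ and $\mathbb{R}_n''$, and force the coefficient of each power of $n$ to vanish; this determines the $a_j(T)$ recursively and yields \eqref{ZR}. The delicate point here is the pair of terms carrying denominators $\mathbb{R}_n^{3}$ and $\mathbb{R}_n^{3}(n\mathbb{R}_n-T)$: since $\mathbb{R}_n\sim2n$ each must first be re-expanded as a geometric series in $1/n$, and it is precisely these contributions that supply the $\alpha,\beta$-dependent coefficients appearing in \eqref{ZR} from order $n^{-3}$ onward. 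I expect the main effort, and the principal source of error, to be carrying every term consistently up to order $n^{-5}$, the order needed to read off the $T^{2}$ coefficient.
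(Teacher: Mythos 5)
Your proposal is correct and follows essentially the same route as the paper's own proof: rescale to $\mathbb{R}_n(T)=R_n(T/n)$ to get the master equation \eqref{tR}, balance the two dominant non-derivative contributions to obtain the quadratic $\mathbb{R}_n^{2}-(2n+\alpha+\beta+1)\mathbb{R}_n+2T=0$ whose appropriate root fixes the leading coefficients $2n$ and $\alpha+\beta+1$, and then substitute the ansatz $\mathbb{R}_n(T)=\sum_{j\geq0}a_j(T)n^{1-j}$ and match powers of $n$. The only differences are expository (you make the branch selection via the $T=0$ value and the dominance of the algebraic over the derivative terms explicit, which the paper leaves implicit), so nothing further is needed.
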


The proposition above is used to prove the following theorem.
\begin{theorem}
When $n\rightarrow\infty$, $t\rightarrow0^+$ and $T=t n$ is fixed, the monic polynomials $P_n(x)$ orthogonal with respect to the weight $w(x)=x^\alpha(1-x)^\beta{\rm e}^{-t x}$ on $[0,1]$ satisfy the confluent Heun equation
\begin{equation}\label{ZP2}
  P_n''(z)+\left(\frac{\widetilde{\gamma}}{z}+\frac{\widetilde{\delta}}{z-1}+\widetilde{\epsilon}\right)P_n'(z)+\frac{\widetilde{\alpha} z-\widetilde{q}}{z(z-1)}P_n(z)=0
\end{equation}
with parameters
$$
\widetilde{\gamma}=\alpha+1,\quad\widetilde{ \delta}=\beta+1,\quad \widetilde{\epsilon}=-T/n,\quad \widetilde{\alpha}=T,\quad \widetilde{q}=n(n+\alpha+\beta+1)+T/2.$$
\end{theorem}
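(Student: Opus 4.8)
The plan is to start from the exact Fuchsian equation \eqref{ZP1} and to show that, under the scaling $t=T/n$ with $T$ fixed, its fourth singular point is pushed off to infinity, so that only the configuration $z=0,1,\infty$ of a confluent Heun equation survives while the coefficients at those points converge to the asserted (still $n$-dependent) parameters. The extra singularity of both $Q_n$ and $S_n$ sits at $z=R_n(t)/t$. Writing $t=T/n$ and invoking Proposition~\ref{Zpropo2}, namely $R_n(t)=\mathbb{R}_n(T)=2n+\alpha+\beta+1+O(1/n)$, we get
\[
\frac{R_n(t)}{t}=\frac{n\,\mathbb{R}_n(T)}{T}=\frac{2n^{2}}{T}\bigl(1+O(1/n)\bigr)\longrightarrow\infty .
\]
Thus the whole argument is an instance of confluence: the fourth regular singular point coalesces with the irregular point at infinity created by the $-t$ term, and all statements below are convergence of the coefficient functions, uniform on compact subsets of $\mathbb{C}\setminus\{0,1\}$, with $o(1)$ remainders.

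For the first-order coefficient I would simply pass to the limit in
\[
Q_n(z,t)=\frac{\alpha+1}{z}+\frac{\beta+1}{z-1}-t-\frac{1}{z-R_n(t)/t}.
\]
Since $R_n(t)/t\to\infty$, the last term tends to $0$, and because $t=T/n$ the constant term equals $-T/n$ exactly. This already identifies $\widetilde\gamma=\alpha+1$, $\widetilde\delta=\beta+1$ and $\widetilde\epsilon=-T/n$, matching the asserted parameters.

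The substance of the proof is the zeroth-order coefficient. I would first combine the two fractions of $S_n$ into
\[
S_n(z,t)=\frac{1}{z(z-1)}\left[n t\,z-n-\sum_{j=0}^{n-1}R_j(t)+\frac{n z+r_n(t)}{z-R_n(t)/t}\right].
\]
The coefficient of $z$ is $nt=T$, giving $\widetilde\alpha=T$. Since $r_n(t)=O(n)$ while $R_n(t)/t=O(n^{2})$, the last bracketed term is $O(1/n)$ and drops out. It then remains to show $n+\sum_{j=0}^{n-1}R_j(t)\to n(n+\alpha+\beta+1)+T/2=\widetilde q$, which by \eqref{ZSR} is equivalent to $-t\,r_n(t)-t^{2}\beta_n(t)\to T/2$. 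Substituting the expansion \eqref{ZR} into \eqref{Zr}, and using $R_n'(t)=n\,\mathbb{R}_n'(T)$, should yield $t\,r_n(t)=-\tfrac{T}{2}+\tfrac{(\alpha-\beta)T}{4n}+O(1/n^{2})$, so that $-t\,r_n(t)\to T/2$; one then checks from \eqref{Zb} that $t^{2}\beta_n(t)\to0$.

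The hard part is precisely this last evaluation of $t^{2}\beta_n(t)$. After clearing the factor $t(t-R_n)$ in \eqref{Zb}, the numerator contains two terms of size $O(n)$, namely $t\,n(\beta+n)$ and $(\alpha+\beta+2n)\,t\,r_n(t)$, whose leading parts $Tn$ and $-Tn$ cancel; to see that the surviving $O(1)$ constants cancel as well one genuinely needs the subleading coefficient $\tfrac{(\alpha-\beta)T}{4n}$ of $t\,r_n(t)$ obtained above, after which $t^{2}\beta_n(t)=tN/(t-R_n)=O(1/n)/O(n)\to0$. Collecting these limits gives $S_n(z,t)=\dfrac{Tz-\widetilde q}{z(z-1)}+o(1)$, and combined with the limit of $Q_n$ this is exactly the confluent Heun equation \eqref{ZP2} with the stated parameters. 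The only delicate bookkeeping, and the step I expect to require the most care, is carrying the $R_n$-expansion to one order beyond leading so that the $O(n)$ and $O(1)$ cancellations in $t\,r_n(t)$ and $t^{2}\beta_n(t)$ are verified rather than merely asserted.
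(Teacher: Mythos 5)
Your proposal is correct and follows essentially the same route as the paper: both start from the exact equation \eqref{ZP1}, use the relations \eqref{Zr}--\eqref{Zb} to express the coefficients through $R_n$, substitute the expansion of Proposition~\ref{Zpropo2} under the scaling $T=nt$, and pass to the limit in $Q_n$ and $S_n$ (your expansion $t\,r_n(t)=-\tfrac{T}{2}+\tfrac{(\alpha-\beta)T}{4n}+\mathcal{O}(n^{-2})$ and the conclusion $t^2\beta_n(t)\to 0$ agree with the paper's computations and its remark on $r_n(T/n)$). The only difference is organizational: you reduce the constant term via \eqref{ZSR} to the statement $-t\,r_n(t)-t^{2}\beta_n(t)\to T/2$ and track the $Tn$ and $\mathcal{O}(1)$ cancellations explicitly, whereas the paper substitutes $r_n$, $\beta_n$, $\sum_j R_j$ wholesale into $S_n$ before expanding, so this is the same proof with more explicit bookkeeping.
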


\begin{proof}
Substituting \eqref{Zr}--\eqref{Zb} into \eqref{ZP1}, the coefficients of \eqref{ZP1}, $Q_n(z,t)$ and $S_n(z,t)$, are given in terms of $R_n(t)$ and $R'_n(t)$. In particular,
\begin{align*}
Q_n(z,t)=&\frac{\alpha+1}{z}+\frac{\beta+1}{z-1}-t-\frac{1}{z-R_n(t)/t},\\
 S_n(z,t)=&\frac{n z+\left[t R_n'(t)+\alpha t-(2n+1+\alpha+\beta+t)R_n(t)+R_n^2(t)\right]/(2t)}{z(z-1)(z-R_n(t)/t)}\\
 &+\frac{ t R_n'(t)+\alpha t-(2n+1+\alpha+\beta+t)R_n(t)+R_n^2(t)}{2z(z-1)}\\
  &+\frac{n(t z-1-\alpha -\beta -n)+t n (\beta +n)/\left(t-R_n(t)\right)}{z(z-1)}\\
 &+ \frac{(\alpha +\beta +2 n) \left[t R_n'(t)+\alpha t-(2n+1+\alpha+\beta+t)R_n(t)+R_n^2(t)\right]}{2z(z-1) \left(t-R_n(t)\right)}\\
 &+\frac{\left[t R_n'(t)+\alpha t-(2n+1+\alpha+\beta+t)R_n(t)+R_n^2(t)\right]^2 }{4z(z-1) R_n(t)\left(t-R_n(t)\right)}\\
 &-\frac{\alpha t\left[t R_n'(t)+\alpha t-(2n+1+\alpha+\beta+t)R_n(t)+R_n^2(t)\right] }{2z(z-1) R_n(t)\left(t-R_n(t)\right)}.
\end{align*}
Setting $T=n t$, the coefficients are further associated with $\mathbb{R}_n(T)$ and $\mathbb{R}'_n(T)$. From Proposition \ref{Zpropo2} we can substitute the asymptotic expression of $\mathbb{R}_n(T)$. Let $n$ tends to infinity. We obtain
\begin{align*}
   \mathbb{Q}_n(z,T)=&\frac{\alpha+1}{z}+\frac{\beta+1}{z-1}-\frac{T}{n}+\mathcal{O}\left(n^{-2}\right),\\
 \mathbb{S}_n(z,T)=&\frac{Tz-[n(n+\alpha+\beta+1)+T/2]}{z(z-1)}+\mathcal{O}\left(n^{-1}\right).
\end{align*}
Substituting above expressions into \eqref{ZP1}, we find the confluent Heun equation \eqref{ZP2}. Note that $P_n'(z)=n\sum _{j=0}^{n-1}b_jP_j(z),$ therefore, we take $\mathcal{O}\left(n^{-2}\right)$ for $\mathbb{Q}_n(z,T)$ and $\mathcal{O}\left(n^{-1}\right)$ for $\mathbb{S}_n(z,T)$.
\end{proof}

\begin{corollary}\label{Zc}
When $t=0$, the weight $w(t,x)=x^\alpha(1-x)^\beta{\rm e}^{-t x}$ reduces to the classical Jacobi weight $w(0,x)=x^\alpha(1-x)^\beta$ and the confluent Heun equation \eqref{ZP2} reduces to the hypergeometric differential equation (Jacobi differential equation)
\begin{equation}\label{ZP3}
  P_n''(z)+\left(\frac{\alpha+1}{z}+\frac{\beta+1}{z-1}\right)P_n'(z)-\frac{n(n+\alpha+\beta+1)}{z(z-1)}P_n(z)=0.
\end{equation}
\end{corollary}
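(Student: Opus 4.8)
The plan is to obtain \eqref{ZP3} directly from the confluent Heun equation \eqref{ZP2} by evaluating its five parameters at $t=0$. Since the scaling variable is $T=nt$, the specialization $t\to 0$ with $n$ fixed is equivalent to $T=0$, and I would simply substitute this value into the parameter list of the preceding theorem.

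First I would record what each parameter becomes. The parameters $\widetilde{\gamma}=\alpha+1$ and $\widetilde{\delta}=\beta+1$ carry no dependence on $T$ and are therefore left untouched. The remaining three do depend on $T$: the term $\widetilde{\epsilon}=-T/n$ vanishes, $\widetilde{\alpha}=T$ vanishes, and $\widetilde{q}=n(n+\alpha+\beta+1)+T/2$ collapses to $n(n+\alpha+\beta+1)$.

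Next I would insert these values into \eqref{ZP2}. The constant term $\widetilde{\epsilon}$ disappears from the coefficient of $P_n'(z)$, leaving exactly $(\alpha+1)/z+(\beta+1)/(z-1)$, while the numerator $\widetilde{\alpha}z-\widetilde{q}$ reduces to $-n(n+\alpha+\beta+1)$, so that the zeroth-order coefficient becomes $-n(n+\alpha+\beta+1)/[z(z-1)]$. This reproduces \eqref{ZP3} verbatim. To justify naming it the Jacobi differential equation I would, as a final check, send $[0,1]$ to $[-1,1]$ via $z=(1-u)/2$ and confirm that \eqref{ZP3} transforms into the classical Jacobi ODE $(1-u^2)y''+[\beta-\alpha-(\alpha+\beta+2)u]y'+n(n+\alpha+\beta+1)y=0$, the equation satisfied by $P_n^{(\alpha,\beta)}(u)$ with weight $(1-u)^\alpha(1+u)^\beta$.

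There is essentially no obstacle here: the corollary is a degenerate-limit substitution rather than a genuine derivation. The only conceptual point worth flagging is structural, namely that setting $\widetilde{\epsilon}=\widetilde{\alpha}=0$ removes precisely the two terms (the constant in the drift coefficient and the $\widetilde{\alpha}z$ in the numerator) responsible for turning $z=\infty$ into an irregular singular point of \eqref{ZP2}; once they vanish the point at infinity becomes regular again and the equation reverts to a genuinely Fuchsian, three-singular-point hypergeometric equation, which is what legitimizes the word \emph{reduces} in the statement.
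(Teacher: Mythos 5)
Your proposal is correct and takes essentially the same route as the paper's own proof, whose first step is precisely your substitution: $t=0$ gives $T=0$, hence $\widetilde{\epsilon}=\widetilde{\alpha}=0$ and $\widetilde{q}=n(n+\alpha+\beta+1)$, which yields \eqref{ZP3} immediately. The paper then adds an independent verification via the ladder operators $A_n(z)$, $B_n(z)$ and condition $(S_1)$ for the undeformed weight $x^\alpha(1-x)^\beta$, but that is offered only as an alternative derivation, not as a required part of the argument.
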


\begin{proof}

In the case when $t=0$, we have  $T=0$. Then $\widetilde{\epsilon}=\widetilde{\alpha}=0$ and $\widetilde{q}=n(n+\alpha+\beta+1)$, which directly gives  \eqref{ZP3}.

Alternatively, we can use ladder operators to obtain the same result.

For  $w(0,x)=x^\alpha(1-x)^\beta$ we have $v(x)=-\alpha \ln x-\beta\ln (1-x)$ and $v'(x)=-\alpha/x-\beta/(x-1)$. From \eqref{A}--\eqref{B} we find
\begin{align*}
  A_n(z)&=\frac{1}{h_n}\int_0^1\;\left(\frac{\alpha}{yz}+\frac{\beta}{(y-1)(z-1)}\right)P^2_n(y)y^\alpha(1-y)^\beta\;{\rm d}y,\\
   B_n(z)&=\frac{1}{h_n}\int_0^1\;\left(\frac{\alpha}{yz}+\frac{\beta}{(y-1)(z-1)}\right)P_n(y)P_{n-1}(y)y^\alpha(1-y)^\beta\;{\rm d}y.
\end{align*}
Integrating by parts, it follows that
\begin{equation}
  A_n(z)=\frac{R_n}{z}-\frac{R_n}{z-1}.\label{ZA}
\end{equation}
Similarly one has,
\begin{equation}
  B_n(z)=\frac{r_n}{z}-\frac{n+r_n}{z-1}.\label{ZB}
\end{equation}
Here $R_n$ and $r_n$ are defined by
\begin{align*}
  &R_n=R_n(\alpha,\beta) := \frac{\alpha}{h_n}\int_0^1\;\frac{P_n ^2(y)}{y}y^\alpha(1-y)^\beta\;{\rm d}y,\\
  &r_n=r_n(\alpha,\beta) := \frac{\alpha}{h_{n-1}}\int_0^1\;\frac{P_n (y)P_{n-1}(y)}{y}y^\alpha(1-y)^\beta\;{\rm d}y.
\end{align*}

Substituting \eqref{ZA} and \eqref{ZB} into $(S_1)$ and equating residues of both sides of $(S_1)$ at $z=0$ and $z=1$ gives
\begin{align*}
  r_n+r_{n+1}&=-\alpha_nR_n+\alpha,\\
  -(2n+1+r_n+r_{n+1})&=-(1-\alpha_n)R_n+\beta.
\end{align*}
Obviously $R_n$ can immediately be obtained by adding two equalities above:
$$R_n=2n+1+\alpha+\beta.$$
Then we have
$$\sum_{j=0}^{n-1}R_j=n(n+\alpha+\beta).$$
Recall now \eqref{H},
\begin{align*}
  -\left(v'(z)+\frac{A'_n(z)}{A_n(z)}\right) &=\frac{\alpha+1}{z}+\frac{\beta+1}{z-1},  \\
  B'_n(z)-B_n(z)\frac{A'_n(z)}{A_n(z)}+\sum_{j=0}^{n-1}A_j(z)&=-\frac{n(n+\alpha+\beta+1)}{z(z-1)},
\end{align*}
which produces \eqref{ZP3}.

Actually, we can obtain other relations for $r_n$, recurrence coefficients $\alpha_n$ and $\beta_n$ by using $(S_2)$ and $(S_2')$.  See similar calculations for the classical Jacobi weight $(1-x)^\alpha(1+x)^\beta$ in  Chen and Ismail \cite{ChenIsmail2005}, where  explicit expressions for $r_n$, $\alpha_n$, $\beta_n$ and also   explicit expressions for the polynomials were obtained.
\end{proof}

\begin{rem} Combining \eqref{Zr}--\eqref{Zb} with Proposition \ref{Zpropo2}, and sending $n\rightarrow \infty$, $t\rightarrow 0^+$ and keeping  $T=nt$  fixed, we find the following asymptotic expressions for  $r_n(T/n)$, $\mathbb{\beta}_n(T/n)$ and $\sum_{j=0}^{n-1}\mathbb{R}_j(T)$:
\begin{align}
r_n(T/n)=&-\frac{n}{2}+\frac{\alpha -\beta }{4}-\frac{T +\beta ^2-\alpha ^2}{8 n}+\frac{(\alpha -\beta ) (\alpha +\beta )^2}{16 n^2}\notag\\
&~~~+\frac{T \left(2 \alpha ^2+2 \beta ^2-1\right)-(\alpha -\beta ) (\alpha +\beta )^3}{32 n^3}+\mathcal{O}\left(\frac{1}{n^3}\right), \\
\beta_n(T/n)=&\frac{1}{16}+\frac{1-2 \alpha ^2-2 \beta ^2}{64 n^2}+\frac{\eta_n}{256 n^3 T (\alpha +\beta +1)}+\mathcal{O}\left(\frac{1}{n^4}\right),\label{Zb1}\\
\sum_{j=0}^{n-1}\mathbb{R}_j(T)=&n(n+\alpha +\beta )+\frac{T }{2}+\frac{(\beta   -\alpha)  T }{4 n}+\frac{(2 (\alpha ^2 - \beta ^2)  +T)T}{16 n^2}\notag\\
&~~~~~~~~~~~~~~~~~~~~~~~~~~-\frac{T (\alpha -\beta ) (\alpha +\beta )^2}{16 n^3}+\mathcal{O}\left(\frac{1}{n^4}\right)\label{ZSR1},
\end{align}
where
\begin{align*}
  \eta_n=&\left(\alpha ^2-\beta ^2\right) \left[3 T^2+16+5 \alpha ^4+20 \alpha ^3 (\beta +1)+10 \alpha ^2 (3 \beta  (\beta +2)+4)\right.\notag\\
&\left.+20 \alpha  (\beta +1) (\beta  (\beta +2)+2)+5 \beta  (\beta +2) (\beta  (\beta +2)+4)\right]\notag\\&-2 T \left[\alpha ^4+4 \alpha ^3 (\beta +2)+\alpha ^2 \left(6 \beta ^2+8 \beta +9\right)+2 \alpha  (\beta +2) \left(2 \beta ^2-1\right)\right.\notag\\
&\left.+\beta  (\beta  (\beta  (\beta +8)+9)-4)-5\right].
\end{align*}

\end{rem}

Next  we consider the second method (Dyson's Coulomb fluid method) to obtain asymptotic expression of the recurrence coefficient $\alpha_n(t)$.  Using relation
\begin{equation}\label{}
R_n(t)=2n+1+\alpha+\beta+t-t\alpha_n(t),
\end{equation}
see \cite{ZGCZ}, we then can deduce the asymptotic expression for $R_n(t)$. Using this method we can  verify the accuracy of $R_n(t)$ which was obtained in Proposition \ref{Zpropo2}.

\begin{proposition}
Sending $n\rightarrow\infty$, we obtain the following asymptotic expressions of the recurrence coefficients:
 \begin{align}
  \alpha_n(T/n)  \sim &\frac{1}{2}+\frac{\alpha^2-\beta^2}{8n^2}-\frac{(\alpha -\beta ) (\alpha +\beta )^2}{8 n^3}\notag\\
  &~~~~~~~~~~~~~~~+\frac{3 (\alpha -\beta ) (\alpha +\beta )^3-2 T \left(\alpha ^2+\beta ^2\right)}{32 n^4}+\mathcal{O}\left(\frac{1}{n^3}\right),\label{Za1}\\
  \beta_n(T/n)\sim&\frac{1}{16}-\frac{\alpha ^2+\beta ^2}{32 n^2}+\frac{(\alpha +\beta ) \left(\alpha ^2+\beta ^2\right)}{32 n^3}\notag\\
  &~~~-\frac{ (\alpha +\beta )^2 \left(5 \alpha ^2+2 \alpha  \beta +5 \beta ^2\right)-4 T (\alpha^2-\beta^2  )}{256 n^4}+\mathcal{O}\left(\frac{1}{n^5}\right).\label{Zb2}
\end{align}
\end{proposition}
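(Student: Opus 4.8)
The plan is to work entirely within the Coulomb fluid framework of Section~1.3, determining the equilibrium endpoints $a=a(n,t)$ and $b=b(n,t)$ from the stability conditions \eqref{supp2} and then reading off the recurrence coefficients from the identifications \eqref{asym1} and \eqref{asym2}. For the weight $w(x)=x^\alpha(1-x)^\beta{\rm e}^{-tx}$ the external potential is $v(x)=-\alpha\ln x-\beta\ln(1-x)+tx$, so that $v'(x)=-\alpha/x+\beta/(1-x)+t$. First I would substitute this $v'$ into both integrals in \eqref{supp2}.

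The key computational input is the evaluation of the resulting elementary singular integrals. With $[a,b]\subset[0,1]$, the standard formulas $\int_a^b\frac{{\rm d}x}{(x-c)\sqrt{(b-x)(x-a)}}=\frac{\pi}{\sqrt{(a-c)(b-c)}}$ for $c<a$ and $=-\frac{\pi}{\sqrt{(c-a)(c-b)}}$ for $c>b$, together with $\int_a^b\frac{{\rm d}x}{\sqrt{(b-x)(x-a)}}=\pi$ and $\int_a^b\frac{x\,{\rm d}x}{\sqrt{(b-x)(x-a)}}=\frac{\pi(a+b)}{2}$, applied at $c=0$ and $c=1$, reduce \eqref{supp2} to the algebraic pair
\begin{align*}
-\frac{\alpha}{\sqrt{ab}}+\frac{\beta}{\sqrt{(1-a)(1-b)}}+t&=0,\\
-(\alpha+\beta)+\frac{\beta}{\sqrt{(1-a)(1-b)}}+\frac{t(a+b)}{2}&=2n,
\end{align*}
which together fix $a$ and $b$ as functions of $n$ and $t$.

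The next step is to solve this system asymptotically. Introducing $P=\sqrt{ab}$ and $Q=\sqrt{(1-a)(1-b)}$, so that $ab=P^2$ and $a+b=1+P^2-Q^2$, the two relations become explicit expressions for $\alpha/P$ and $\beta/Q$ in terms of $2n+\alpha+\beta$ and the small coupling $t(a+b)/2$. Setting $t=T/n$ and observing that $a\to0$, $b\to1$ with $P,Q=O(1/n)$, I would expand $P$ and $Q$ in powers of $1/n$ and solve order by order; the leading values $P\approx\alpha/(2n+\alpha+\beta)$ and $Q\approx\beta/(2n+\alpha+\beta)$ already give $a+b=1+(\alpha^2-\beta^2)/(4n^2)+\cdots$ and $ab=\alpha^2/(4n^2)+\cdots$. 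Substituting into $\alpha_n\sim(a+b)/2$ and $\beta_n\sim\big((b-a)/4\big)^2=\big((a+b)^2-4ab\big)/16$ and expanding then produces \eqref{Za1} and \eqref{Zb2}; as a consistency check, these leading corrections reproduce $\tfrac12+(\alpha^2-\beta^2)/(8n^2)$ and $\tfrac1{16}-(\alpha^2+\beta^2)/(32n^2)$.

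The main obstacle is the bookkeeping in the order-by-order inversion of this coupled transcendental system up to $O(1/n^4)$: since $P$ and $Q$ are themselves $O(1/n)$, the coupling terms $tP^2$ and $tQ^2$ first contribute at order $1/n^3$, and these must be tracked consistently through the quadratic $\xi^2-(1+P^2-Q^2)\xi+P^2=0$ whose roots are $a$ and $b$, so that the higher corrections to $a+b$ and to $ab$ are obtained to the same uniform order. Once \eqref{Za1} and \eqref{Zb2} are established, the relation $R_n(t)=2n+1+\alpha+\beta+t-t\alpha_n(t)$ furnishes an independent reconstruction of $\mathbb{R}_n(T)$, which I would match against Proposition~\ref{Zpropo2} to confirm that the fluid approximation is faithful through the expected orders.
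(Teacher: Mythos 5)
Your proposal is correct and follows essentially the same route as the paper: both substitute $v'$ into the stability conditions \eqref{supp2}, reduce them to the identical pair of algebraic equations for the endpoints $a,b$, expand the endpoints in powers of $1/n$ with $T=nt$ fixed, and read off \eqref{Za1}--\eqref{Zb2} from \eqref{asym1}--\eqref{asym2}. The only difference is cosmetic: you solve the coupled system for $P=\sqrt{ab}$ and $Q=\sqrt{(1-a)(1-b)}$ order by order, whereas the paper eliminates one unknown to obtain the single quintic \eqref{ZX} for $X=1/\sqrt{ab}$ and expands its solution before substituting back.
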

\begin{proof}

For $w(x)=x^\alpha(1-x)^\beta{\rm e}^{-t x}$ we have
 $$ v(x)=-\alpha\ln x -\beta\ln (1-x)+t x,\quad v'(x)=-\frac{\alpha}{ x} -\frac{\beta}{x-1}+t.$$

 Substituting $v'(x)$ into  \eqref{supp2}, by using formulas \eqref{int1}, \eqref{int2} and \eqref{int3} in Appendix 1,  we obtain the following two algebraic equations:
\begin{align}
t +\frac{\beta}{\sqrt{(1-a)(1-b)}}-\frac{\alpha}{\sqrt{ab}}&=0,\label{Zagb1}\\
2n+\alpha+\beta-\frac{\beta}{\sqrt{(1-a)(1-b)}}-\frac{(a+b)t}{2}&=0.\label{Zagb2}
\end{align}
Adding  two algebraic equations above, we obtain
\begin{align}
2n+\alpha +\beta +t -\frac{\alpha}{\sqrt{ab}}-\frac{(a+b)t}{2}=0.\label{Zagb3}
\end{align}

Take  $X(t):=\frac{1}{\sqrt{ab}}$  with $X(0)=(2n+\alpha+\beta)/\alpha$. Let   $t=0$ in \eqref{Zagb3}. Solving for $(a+b)/2$  from \eqref{Zagb3}, we find
\begin{equation}\label{Zagb4}
\frac{a+b}{2}=\frac{2n+\alpha+\beta+t-\alpha X(t)}{t}.
\end{equation}
Substituting  $(a+b)/2$ into the square root $\sqrt{(1-a)(1-b)}$ of \eqref{Zagb1} and using $X(t)$ to replace $1/\sqrt{ab}$, we get
\begin{equation*}
  t +\beta/\sqrt{1-\frac{2}{t}(2n+\alpha+\beta+t-\alpha X(t))+\frac{1}{X(t)^2}}-\alpha X(t)=0.
\end{equation*}
%Move the ones without square root to one side of equality then square both side of the equality to avoid the square root, a
After some simple calculations we  find  that $X(t)$ satisfies the quintic equation
\begin{align}\label{ZX}
2 \alpha ^3 X(t)^5&- \alpha^2\left(2 \alpha +2 \beta +5 t +4  n\right)X(t)^4+ 4 \alpha  t  (2n+\alpha +\beta +t )X(t)^3 \notag\\
&- t\left( 4 t  n -\alpha ^2+2 \alpha  t +(\beta +t )^2\right)X(t)^2-2 \alpha  t ^2 X(t)+t ^3=0
\end{align}
 with  $X(0)=(2n+\alpha+\beta)/\alpha$.

Let $n\rightarrow \infty$, $t\rightarrow 0^+$ and $T=nt$ be fixed. We can find an equivalent  quintic equation in terms of $T$. Consider the fist two terms of $n$ and $n^0$ of this new quintic equation,
\begin{equation*}
  \left(8 \alpha  T \widetilde{\mathbb{X}}(T)^3+2 \alpha ^3 \widetilde{\mathbb{X}}(T)^5-2 \alpha ^3 \widetilde{\mathbb{X}}(T)^4-2 \alpha ^2 \beta  \widetilde{\mathbb{X}}(T)^4\right)-4 n \left(\alpha ^2 \widetilde{\mathbb{X}}(T)^4\right)=0.
\end{equation*}
Solving the  equation above we  obtain two nonzero solutions,
\begin{align*}
 \widetilde{\mathbb{X}}(T)=\frac{2n+\alpha +  \beta \pm\sqrt{\left(2n+\alpha +  \beta \right)^2-16  T}}{2 \alpha }.
\end{align*}
Taking  the Taylor series for  large $n$, we  obtain
\begin{align*}
 \widetilde{\mathbb{X}}_1(T)&=\frac{2 T}{\alpha n}-\frac{(\alpha+\beta)T}{\alpha n^2}+\mathcal{O}\left(\frac{1}{n^3}\right),\\
  \widetilde{\mathbb{X}}_2(T)&=\frac{2 n+\alpha+\beta}{\alpha }-\frac{2 T}{\alpha n}+\frac{(\alpha+\beta)T}{\alpha n^2}+\mathcal{O}\left(\frac{1}{n^3}\right).
\end{align*}
Assuming that  $\mathbb{X}(T)$ has the form
$$\mathbb{X}(T)=\sum_{j=0}^{\infty}b_j(T)n^{1-j},\quad n\rightarrow\infty,$$
and substituting the expression above into \eqref{ZX} with $X(0)=(2n+\alpha+\beta)/\alpha$, we obtain when $n\rightarrow\infty$
\begin{align}
\mathbb{X}(T)=\frac{2n+\alpha+\beta}{\alpha}&+\frac{T}{2\alpha n}-\frac{(\alpha^2-\beta^2)T}{8\alpha n^3}+\frac{T (\alpha -\beta ) (\alpha +\beta )^2}{8 \alpha  n^4}\notag\\
&+\frac{T \left(2 T \left(\alpha ^2+\beta ^2\right)-3 (\alpha -\beta ) (\alpha +\beta )^3\right)}{32 \alpha  n^5}+\mathcal{O}\left(\frac{1}{n^4}\right).
\end{align}
Setting $T=nt$ in \eqref{Zagb4} we obtain
 \begin{align*}
  \alpha_n(T/n)  \sim &\frac{a+b}{2}=\frac{2n+\alpha+\beta+T/n-\alpha \mathbb{X}(T)}{T/n}
  \end{align*}
and
  \begin{align*}
  \beta_n(T/n)\sim&\frac{[(b+a)/2]^2-ab}{4}=\frac{1}{4}\left[\left(\frac{2n+\alpha+\beta+T/n-\alpha \mathbb{X}(T)}{T/n}\right)^2-\frac{1}{\mathbb{X}(T)^2}\right]\notag.
\end{align*}
Substituting $\mathbb{X}(T)$ into  the expressions above and sending  $n$ to infinity, we obtain \eqref{Za1} and \eqref{Zb2}.

\end{proof}

\begin{rem}
 From Dyson's Coulomb fluid approximation theory we obtain the same order asymptotic expression of
\begin{align*}
\mathbb{R}_n(T)=&2n+1+\alpha+\beta+\frac{T}{n}-\frac{T\alpha_n(T/n)}{n}\\
\sim &2n+1+\alpha+\beta+\frac{T}{2n}-\frac{T(\alpha^2-\beta^2)}{8n^3}+\mathcal{O}\left(\frac{1}{n^4}\right).
\end{align*}
\end{rem}
However, if we compare \eqref{Zb2} with \eqref{Zb1}, they are not exactly the same. As we mentioned in Introduction, the Coulomb fluid method is suitable for sufficiently large $n$. We see that up to   the order of $\mathcal{O}\left(\frac{1}{n}\right)$ they are equal.

Finally, if we consider  equation \eqref{ZP1} depending on  functions $R_n(t)$ and $R_n'(t)$ satisfying equation (\ref{nonlinear eq Rn section 2.1}) (without any reference to orthogonal polynomials), we can obtain that  it is an equation for the derivative of the confluent Heun function in a special case.

\begin{proposition}
If $R_n(t)$ satisfies the Riccati equation
$$t R_n'(t)= R_n(t)^2-(\alpha +\beta +t-1) R_n(t)+\alpha  t$$
with solution
\begin{equation}\label{}
R_n(t)= \frac{t \left(C_1 \beta   U(\beta +1,\alpha+\beta +1,t)+L_{-\beta-1}^{\alpha +\beta}(t)\right)}{C_1 U(\beta ,\alpha +\beta,t)+L_{-\beta }^{\alpha +\beta -1}(t)}+t,
\end{equation}
then equation \eqref{ZP1} reduces to the equation \eqref{dC}
\begin{align}
 {P}_n''(z)&+ \left(\frac{\widetilde{\gamma} +1}{z}+\frac{\widetilde{\delta} +1}{z-1}+\widetilde{\epsilon} -\frac{\widetilde{\alpha} }{\widetilde{\alpha}  z-\widetilde{q}}\right){P}_n'(z)\notag\\
 &~~~~~~~~~~~~~+\frac{ (\widetilde{\alpha} +\widetilde{\epsilon} ) \left(\widetilde{\alpha}  z^2-2 \widetilde{q} z\right)+\widetilde{\alpha}  \widetilde{\gamma} +\widetilde{q}^2-\widetilde{q} (\widetilde{\gamma} +\widetilde{\delta} -\widetilde{\epsilon} )}{z (z-1) (\widetilde{\alpha } z-\widetilde{q})}{P}_n(z)=0,
\end{align}
with parameters
$$\widetilde{\gamma} = \alpha ,\;\;\widetilde{\delta} = \beta ,\;\; \widetilde{\epsilon} = -t,\;\; \widetilde{\alpha}=(n+1) t,\;\; \widetilde{q}= (n+1)R_n(t).$$
In the special case, when the  constant $C_1=0$, we have
$$R_n(t)=\frac{\alpha  t \, M(\beta ;\alpha +\beta +1;t)}{(\alpha+\beta)\, M(\beta ;\alpha +\beta ;t)},$$
where $M(a;b;x)$, $U(a;b;x)$ is  Kummer function of first and second kind \cite[Sec.~13.2]{DLMF} and $L_n^a(x)$ is generalized Laguerre polynomial \cite[Sec.~18.1]{DLMF}. The Riccati equation satisfies  (\ref{nonlinear eq Rn section 2.1}) when $n=-1$.
\end{proposition}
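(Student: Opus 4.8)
The plan is to treat \eqref{ZP1} as a formal linear ODE in which $R_n(t)$ is an arbitrary function and $r_n$, $\beta_n$, $\sum_{j=0}^{n-1}R_j$ are merely the expressions \eqref{Zr}--\eqref{Zb} (with no appeal to orthogonality), and to prove the three assertions separately: that the Riccati constraint makes \eqref{ZP1} coincide with \eqref{dC} under the stated dictionary; that the general Riccati solution is the displayed ratio, with its $C_1=0$ specialisation; and the consistency of the Riccati with \eqref{nonlinear eq Rn section 2.1} at $n=-1$.

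For the reduction to \eqref{dC} I would first dispose of the drift coefficient. With $\widetilde\alpha=(n+1)t$ and $\widetilde q=(n+1)R_n(t)$ one has $\widetilde q/\widetilde\alpha=R_n(t)/t$ and $\widetilde\alpha/(\widetilde\alpha z-\widetilde q)=1/(z-R_n(t)/t)$, so the coefficient of $P_n'$ in \eqref{dC}, with $\widetilde\gamma=\alpha$, $\widetilde\delta=\beta$, $\widetilde\epsilon=-t$, is literally $Q_n(z,t)$; this holds with no condition on $R_n$. The content is therefore entirely in the zeroth-order coefficient. Here I would substitute the Riccati $tR_n'=R_n^2-(\alpha+\beta+t-1)R_n+\alpha t$ into \eqref{Zr}, which removes the derivative and gives the algebraic form $r_n=[R_n^2-(n+\alpha+\beta+t)R_n+\alpha t]/t$, and then feed this into \eqref{Zb} and \eqref{ZSR}. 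Both $S_n(z,t)$ and the $P_n$-coefficient of \eqref{dC} are rational with the common denominator $z(z-1)(tz-R_n)$; clearing it leaves two quadratics in $z$ to be compared. I expect the $z^2$ terms to agree identically, the $z^0$ terms to agree exactly when $r_n$ takes the Riccati form above (so that matching the constant term is equivalent to the Riccati), and the $z^1$ terms to agree provided $\sum_{j=0}^{n-1}R_j=nR_n$.

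The step that must be checked carefully, and the main obstacle, is that this last identity $\sum_{j=0}^{n-1}R_j=nR_n$ is not an extra hypothesis but a consequence of the Riccati: once $r_n$ is in algebraic form, \eqref{Zb} gives $t^2\beta_n$ as an explicit rational function of $R_n$, and \eqref{ZSR} then collapses to $nR_n$ after simplification (introducing $P:=R_n-(n+\alpha+\beta+t)$ as an abbreviation makes the cancellation transparent, and the two quadratics agree coefficient by coefficient). Verifying this collapse is the crux; granting it, \eqref{ZP1} is identical to \eqref{dC}, so $P_n$ is the derivative of a confluent Heun function with the stated parameters.

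For the explicit solution I would linearise the Riccati by $R_n=t-t\,u'/u$; a direct substitution turns it into Kummer's equation $t u''+(\alpha+\beta-t)u'-\beta u=0$, whose solutions are $M(\beta,\alpha+\beta,t)$ and $U(\beta,\alpha+\beta,t)$, equivalently the Laguerre function $L_{-\beta}^{\alpha+\beta-1}(t)$. Writing $u=C_1U(\beta,\alpha+\beta,t)+L_{-\beta}^{\alpha+\beta-1}(t)$ and applying $\tfrac{d}{dt}U(\beta,\alpha+\beta,t)=-\beta U(\beta+1,\alpha+\beta+1,t)$ and $\tfrac{d}{dt}L_{-\beta}^{\alpha+\beta-1}(t)=-L_{-\beta-1}^{\alpha+\beta}(t)$ identifies $-u'$ with the displayed numerator, which gives the stated ratio. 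Putting $C_1=0$ leaves $u\propto M(\beta,\alpha+\beta,t)$, and the closed form then follows from $\tfrac{d}{dt}M(\beta,\alpha+\beta,t)=\tfrac{\beta}{\alpha+\beta}M(\beta+1,\alpha+\beta+1,t)$ together with the contiguous relation $(\alpha+\beta)M(\beta,\alpha+\beta,t)-\beta M(\beta+1,\alpha+\beta+1,t)=\alpha M(\beta,\alpha+\beta+1,t)$. Finally, for the concluding remark I would differentiate the Riccati to express $R_n''$ through $R_n,R_n'$, eliminate $R_n'$ by the Riccati itself, and substitute into \eqref{nonlinear eq Rn section 2.1}; since the coefficient $\alpha+\beta+t-1$ of the Riccati equals $2n+1+\alpha+\beta+t$ exactly at $n=-1$, the substitution reduces \eqref{nonlinear eq Rn section 2.1} to an identity precisely for that value, which is the asserted consistency.
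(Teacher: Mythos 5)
Your proposal is correct, and it actually supplies what the paper omits: this proposition is stated in the paper with no proof at all (the nearest analogue, the corresponding proposition for the weight $x^\alpha(x+t)^\lambda{\rm e}^{-x}$ in Section 3.1, is ``proved'' only by asserting that the transformed equation becomes \eqref{dC} once the Riccati holds). Judged against that, your argument is a genuine completion rather than a variant, and its three pillars all check out. First, the coefficient of $P_n'$ in \eqref{dC} with $\widetilde{\alpha}=(n+1)t$, $\widetilde{q}=(n+1)R_n$ equals $Q_n(z,t)$ identically, so the whole content sits in the zeroth-order coefficient, as you say. Second, after clearing the common denominator $z(z-1)(tz-R_n)$, the $z^2$ terms agree for free; the Riccati inserted into \eqref{Zr} gives $tr_n=R_n^2-(n+\alpha+\beta+t)R_n+\alpha t$; and the crux identity $\sum_{j=0}^{n-1}R_j=nR_n$ does indeed collapse out of \eqref{ZSR} and \eqref{Zb}: writing $P:=tr_n$, multiplying through by $R_n(t-R_n)$ and comparing powers of $R_n$, the quartic terms cancel and the cubic, quadratic, linear and constant coefficients vanish using only $m-s=t-n$ and $t-m+s=n$ with $m=n+\alpha+\beta+t$, $s=\alpha+\beta+2n$ --- I verified this coefficient by coefficient. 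Third, the substitution $R_n=t-tu'/u$ does produce Kummer's equation $tu''+(\alpha+\beta-t)u'-\beta u=0$; the derivative formulas for $U$ and the Laguerre function identify $-u'$ with the displayed numerator; and the $C_1=0$ case follows from the contiguous relation you quote, which is the standard $b\,M(a,b,z)-a\,M(a+1,b+1,z)=(b-a)\,M(a,b+1,z)$ with $a=\beta$, $b=\alpha+\beta$. For the $n=-1$ claim, eliminating $R_n'$ and $R_n''$ via the Riccati and substituting into \eqref{nonlinear eq Rn section 2.1} reduces it to a polynomial identity in $R_n$ and $t$ that holds exactly because $2n+1+\alpha+\beta+t$ becomes $\alpha+\beta+t-1$ at $n=-1$; I checked that the residual terms cancel. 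The only cosmetic imprecision is your bookkeeping of which power of $z$ needs which hypothesis: the constant term requires both the algebraic form of $r_n$ and $\sum_{j=0}^{n-1}R_j=nR_n$, not the former alone; since you establish both, the proof is unaffected.
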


\subsection{$x^\alpha(1-x)^\beta{\rm e}^{-t/ x},\; x\in(0,1],\alpha,\beta,t>0$ }
The weight  $x^\alpha(1-x)^\beta{\rm e}^{-t/ x}$ on $(0,1]$ was  considered in Chen, Dai\cite{CCF} and Chen \cite{CD}.
The second order differential equation for  $P_n(z)$  is as follows:
\begin{align}\label{CD1}
P_n''(z)+Q_n(z,t)P_n'(z)+ S_n(z,t)P_n(z)=0,
\end{align}
where
\begin{align*}
Q_n(z,t)=&\frac{t}{z^2}-\frac{1}{z-\tilde{R}_n(t)/[\tilde{R}_n(t)-R_n(t)]}+\frac{\alpha +2}{z}+\frac{\beta +1}{z-1},\\
S_n(z,t)=&\frac{\tilde{R}_n(t) \left[n (z-1)^2-r_n(t)\right]+R_n(t) \tilde{r}_n(t)-n R_n(t) z^2}{(z-1) z^2 \left[z(R_n(t)- \tilde{R}_n(t))+\tilde{R}_n(t)\right]} \notag\\&+\frac{\sum _{j=0}^{n-1} \tilde{R}_j(t)}{z^2}-\frac{\sum _{j=0}^{n-1} R_j(t)}{z (z-1)}.
\end{align*}
See Chen and Dai \cite{CD} for the  definitions of $\tilde{r}_n(t)$, $r_n(t)$, $\tilde{R}_n(t)$ and $R_n(t)$. They satisfy the following relations  (see \cite {CD} for details):
\begin{align}
  \tilde{R}_n(t)=&R_n(t)-(\alpha +\beta +2 n+1),\label{CDR}   \\
r_n(t)=&\frac{R_n(t)-\beta -\alpha _n(t)R_n(t)}{2} +\frac{t R'_n(t)-\alpha _n(t)R_n(t) }{2 (\alpha +\beta +2 n+1)},\label{CDrm}\\
  \tilde{r}_n(t)=&\frac{R_n(t)-\beta -\alpha _n(t)R_n(t)}{2} +\frac{t R'_n(t)-\alpha _n(t)R_n(t) }{2 (\alpha +\beta +2 n+1)}\notag\\&~~~~~~~~~~~~~~~~~+\frac{\beta +t+(\alpha +\beta +2 n+2) \alpha _n(t)-R_n(t)}{2}, \label{CDr}
  \\
  \sum_{j=0}^{n-1} \tilde{R}_j(t)=&n(t-\alpha-n)-(2n+\alpha+\beta)(\tilde{r}_n-r_n),\label{CDSRt}\\
  \sum_{j=0}^{n-1} R_j(t)=&\sum_{j=0}^{n-1} \tilde{R}_j(t)+n(n+\alpha+\beta).\label{CDSR}
\end{align}

 The equation \eqref{CD1} is neither Heun equation nor equation for the derivative of the Heun function. However, we will show that it is   a confluent Heun equation using the  asymptotic behaviour of its coefficients.  To obtain asymptotic behavior of $\alpha_n(t)$ we will use  Dyson's Coulomb fluid method. For $R_n(t)$, we will use  the formulas obtained by Chen and Dai \cite{CD}, Chen and Chen\cite{CCF}. Here we just show a brief statement, for further details and proof see \cite[\S2]{CCF}.  This will   further be used to  investigate the second order linear  ordinary differential equation \eqref{CD1}.

\begin{proposition}\cite[\S2]{CCF}.\label{CF}
Define
$$f(t,\alpha,\beta):=n^2\left(\frac{R_n(t)}{2n+1+\alpha+\beta}-1\right).$$
Let  $t\rightarrow0^+$, $n\rightarrow\infty$ and $ T:=2n^2t$ be fixed. If
$$F(T,\alpha,\beta):=\lim_{n\rightarrow\infty}f\left(\frac{T}{2n^2},\alpha,\beta\right),$$
then $F(T,\alpha,\beta)$ satisfies
\begin{equation}\label{F}
F''=\frac{F'{}^2}{F}-\frac{F'}{T}+\frac{2F^2}{T^2}+\frac{\alpha}{2T}-\frac{1}{4F}
\end{equation}
with $F(0,\alpha,\beta)=0,\;F'(0,\alpha,\beta)=1/(2\alpha)$. Equation (\ref{F}) is the third Painlev\'e equation $P_{III'}(8,2\alpha,0,-1)$.

Moreover, for $\alpha\neq \mathbb{Z}$, the following expansion holds:
\begin{equation}
F(T,\alpha,\beta)=\frac{T}{2\alpha}-\frac{T^2}{2\alpha^2(\alpha^2-1)}+\frac{3T^3}{2\alpha^3(\alpha^2-4)(\alpha^2-1)}+\mathcal{O}\left(T^4\right).
\end{equation}
\end{proposition}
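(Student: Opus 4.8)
The plan is to reduce everything to the exact nonlinear differential equation governing $R_n(t)$ and then carry out the announced double-scaling limit on that equation alone. First I would recall, from the ladder-operator analysis of \cite{CCF,CD}, the closed system produced by inserting the rational forms of $A_n(z)$ and $B_n(z)$ for the weight $x^\alpha(1-x)^\beta{\rm e}^{-t/x}$ into $(S_1)$, $(S_2)$ and $(S_2')$. Eliminating $r_n(t)$, $\tilde r_n(t)$ and $\tilde R_n(t)$ through the relations \eqref{CDR}--\eqref{CDSR} leaves a single second-order nonlinear ODE for $R_n(t)$ (reducible to a Painlev\'e~V equation), playing exactly the role that \eqref{nonlinear eq Rn section 2.1} played in Section~2.1. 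This ODE is the only structural input about the polynomials that I need; all subsequent steps are analysis of that scalar equation.

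Next I would implement the scaling dictated by the definition of $f$. Writing $R_n(t)=(2n+1+\alpha+\beta)\bigl(1+F_n(T)/n^2\bigr)$ with $T=2n^2t$, so that $F_n(T)=f(T/(2n^2),\alpha,\beta)$, and using $d/dt=2n^2\,d/dT$, the chain rule gives $R_n'(t)=2(2n+1+\alpha+\beta)F_n'(T)$ and $R_n''(t)=4n^2(2n+1+\alpha+\beta)F_n''(T)$. Substituting these three expressions into the nonlinear ODE for $R_n$ and expanding every coefficient in descending powers of $n$, I expect the top powers to cancel systematically (this is precisely the statement $R_n\sim 2n+1+\alpha+\beta$), leaving a leading-order balance that is exactly \eqref{F} for the limit $F=\lim_{n\to\infty}F_n$. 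Identifying \eqref{F} with $P_{III'}(8,2\alpha,0,-1)$ is then a routine change-of-variables check against the canonical Painlev\'e~III form.

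The boundary data come from the classical endpoint. At $t=0$ the weight degenerates to the Jacobi weight $x^\alpha(1-x)^\beta$, for which the residue computation exactly as in Corollary~\ref{Zc} gives $R_n(0)=2n+1+\alpha+\beta$, hence $f(0)=0$ and $F(0,\alpha,\beta)=0$. The value $F'(0)=1/(2\alpha)$ is then forced by \eqref{F} itself: inserting $F=c_1T+c_2T^2+\cdots$, the three terms $-F'/T+\alpha/(2T)-1/(4F)$ each contribute at order $T^{-1}$, and their sum $\bigl(\alpha/2-1/(4c_1)\bigr)/T$ must vanish because the left-hand side $F''$ has no pole at $T=0$; this pins down $c_1=1/(2\alpha)$. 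Pushing the same matching to higher order proves the expansion: comparing the coefficient of $T^{k-2}$ in \eqref{F} yields at each stage a linear equation for $c_k$ whose multiplier is a quadratic in $\alpha$ vanishing at $\alpha=\pm(k-1)$. The constant-order balance gives $c_2(1-\alpha^2)=1/(2\alpha^2)$, i.e. $c_2=-1/(2\alpha^2(\alpha^2-1))$, and the next order gives $c_3=3/(2\alpha^3(\alpha^2-4)(\alpha^2-1))$, reproducing the stated series; the resonant multipliers $\alpha^2-(k-1)^2$ are exactly why the expansion holds only for $\alpha\notin\mathbb{Z}$, since an integer $\alpha$ annihilates some factor and forces logarithmic corrections.

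I expect the genuine difficulty to lie not in this formal bookkeeping but in justifying the limit rigorously. The delicate point is that the derivation in the second step replaces $f(T/(2n^2))$ and its first two $T$-derivatives by their limits inside a nonlinear ODE, which is only legitimate if $F_n$, $F_n'$ and $F_n''$ converge uniformly on compact $T$-intervals and if the subleading powers of $n$ discarded in the expansion are genuinely negligible there. Controlling this uniformity — rather than merely balancing formal powers of $n$ — is the main obstacle, and I would handle it either by the a priori estimates already obtained in \cite[\S2]{CCF} for the Hankel-determinant/recurrence-coefficient asymptotics, or by verifying that the formal series solution of \eqref{F} is an actual (analytic) solution near $T=0$ and matching it to $F_n$ via the boundary data.
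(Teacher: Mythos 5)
Your proposal is correct and is essentially the proof the paper points to: the paper does not prove Proposition~\ref{CF} itself but defers to \cite[\S2]{CCF}, whose argument --- like the paper's own treatment of the analogous statements (Proposition~\ref{Zpropo2} and Proposition~\ref{CIp}) --- consists precisely of substituting the double-scaling ansatz $R_n(t)=(2n+1+\alpha+\beta)\bigl(1+F_n(T)/n^2\bigr)$, $T=2n^2t$, into the nonlinear second-order ODE for $R_n$ coming from the ladder-operator analysis of \cite{CD}, extracting \eqref{F} as the leading balance in $n$, and then fixing the data $F(0)=0$, $F'(0)=1/(2\alpha)$ and the small-$T$ series by power-series matching (your computed $c_1$, $c_2$, the identification with $P_{III'}(8,2\alpha,0,-1)$, and the resonance factors $\alpha^2-(k-1)^2$ explaining the restriction $\alpha\notin\mathbb{Z}$ are all correct). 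One cosmetic slip: in the $T^{-1}$ balance the term $F'^2/F$ also contributes $c_1/T$, which cancels the $-c_1/T$ from $-F'/T$; it is only after this cancellation that the residual condition reads $\alpha/2-1/(4c_1)=0$, as you state.
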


 For convenience we use hollow symbol to define a function of  variable $T$ by $\mathbb{R}_n(T)=R_n(T/(2n^2))$.

\begin{proposition}
For $T:=2n^2t$ fixed, $t\rightarrow0^+$, $n\rightarrow\infty$ and for small $T$  we have
\begin{align}
&\mathbb{R}_n(T)=(2n+\alpha+\beta+1)\left[1+\frac{T}{2n^2\alpha}
-\frac{T^2}{2n^2\alpha^2(\alpha^2-1)}+\mathcal{O}\left(T^3\right)\right],\;\alpha\neq\mathbb{Z},\label{CCFr}\\
&\alpha_n(T/(2n^2)) \sim\frac{1}{2}+\frac{\alpha ^2-\beta ^2}{8 n^2}+\frac{T}{4 \alpha  n^2}-\frac{3 T^2}{8 \alpha ^4 n^2}+\mathcal{O}\left(T^3\right),\label{CCFa}\\
&\beta_n(T/(2n^2))\sim\frac{1}{16}-\frac{\alpha ^2+\beta ^2}{32 n^2}-\frac{T}{16 \alpha  n^2}+\frac{3T^2}{32 \alpha ^4 n^2}+\mathcal{O}\left(T^3\right).\label{CCFb}
\end{align}
\end{proposition}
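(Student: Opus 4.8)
The plan is to handle the three claims by two separate routes: \eqref{CCFr} comes straight out of Proposition~\ref{CF}, while \eqref{CCFa} and \eqref{CCFb} are produced by Dyson's Coulomb fluid method, in exact parallel with the computation of Section~2.1.

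First I would dispose of \eqref{CCFr}. Inverting the definition
\[
f(t,\alpha,\beta)=n^2\left(\frac{R_n(t)}{2n+1+\alpha+\beta}-1\right)
\]
gives $R_n(t)=(2n+1+\alpha+\beta)\left(1+f(t,\alpha,\beta)/n^2\right)$. Putting $t=T/(2n^2)$ and letting $n\to\infty$, Proposition~\ref{CF} yields $f(T/(2n^2),\alpha,\beta)\to F(T,\alpha,\beta)$, so that $\mathbb{R}_n(T)=(2n+1+\alpha+\beta)\left(1+\tfrac{1}{n^2}F(T,\alpha,\beta)+\cdots\right)$. Substituting the small-$T$ expansion of $F$ stated in Proposition~\ref{CF} then reproduces \eqref{CCFr} term by term (the restriction $\alpha\neq\mathbb{Z}$ being inherited from the poles of that expansion). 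This step is purely mechanical once Proposition~\ref{CF} is granted.

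For \eqref{CCFa} and \eqref{CCFb} I would start from $w(x)=x^\alpha(1-x)^\beta e^{-t/x}$, so that $v(x)=-\alpha\ln x-\beta\ln(1-x)+t/x$ and $v'(x)=-\alpha/x-\beta/(x-1)-t/x^2$. Feeding $v'$ into the stability conditions \eqref{supp2} and evaluating the integrals with \eqref{int1}--\eqref{int3} of Appendix~1, supplemented by $\int_a^b \frac{{\rm d}x}{x^2\sqrt{(b-x)(x-a)}}=\frac{\pi(a+b)}{2(ab)^{3/2}}$ (obtained by differentiating \eqref{int3} with respect to the pole location and setting it to $0$), produces two coupled algebraic equations for the endpoints $a=a(n,t)$, $b=b(n,t)$:
\begin{align*}
\frac{\beta}{\sqrt{(1-a)(1-b)}}-\frac{\alpha}{\sqrt{ab}}-\frac{t(a+b)}{2(ab)^{3/2}}&=0,\\
\frac{\beta}{\sqrt{(1-a)(1-b)}}-\frac{t}{\sqrt{ab}}&=2n+\alpha+\beta.
\end{align*}
Introducing $X(t):=1/\sqrt{ab}$ and $p:=(a+b)/2$, I would subtract the equations to eliminate the $\beta/\sqrt{(1-a)(1-b)}$ term, solve for $p$ in terms of $X$, and substitute back using $(1-a)(1-b)=1-2p+1/X^2$ to arrive at a single polynomial equation for $X(t)$ with $X(0)=(2n+\alpha+\beta)/\alpha$, in analogy with the quintic \eqref{ZX}. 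In the double-scaling limit $t=T/(2n^2)$, $n\to\infty$, I would posit $\mathbb{X}(T)=\sum_{j\geq0}b_j(T)\,n^{1-j}$ and fix the coefficients by matching orders. Finally, from \eqref{asym1}--\eqref{asym2},
\[
\alpha_n\sim\frac{a+b}{2}=p,\qquad \beta_n\sim\left(\frac{b-a}{4}\right)^2=\frac{p^2-1/X^2}{4},
\]
and inserting the expansion of $\mathbb{X}(T)$ delivers \eqref{CCFa} and \eqref{CCFb}.

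The hard part will be the last asymptotic step. Because $t\sim T/(2n^2)$ is locked to $n$, the large-$n$ and small-$T$ expansions are intertwined, and the elimination must be carried to high enough order that both the $T$- and $T^2$-contributions at order $1/n^2$ are captured, while the square-root singularities at $\sqrt{ab}$ and $\sqrt{(1-a)(1-b)}$ do not leak spurious terms into the lower orders. The decisive bookkeeping is to select the correct branch $X(0)=(2n+\alpha+\beta)/\alpha$ and to verify that the $T=0$ limit recovers the pure Jacobi values $\alpha_n\to1/2$, $\beta_n\to1/16$ (together with their $(\alpha^2-\beta^2)/(8n^2)$ and $-(\alpha^2+\beta^2)/(32n^2)$ corrections); once the branch and orders are pinned down, the remaining manipulations are routine.
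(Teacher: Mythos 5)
Your proposal reproduces the paper's own proof: \eqref{CCFr} is obtained exactly as in the paper by inverting the definition of $f$ in Proposition~\ref{CF} and inserting the small-$T$ expansion of $F$, while \eqref{CCFa}--\eqref{CCFb} follow from the same Coulomb fluid endpoint conditions, the same pair of algebraic equations (yours agree with the paper's \eqref{Calgb1}--\eqref{Calgb2} up to an overall sign), the same elimination down to a single polynomial equation for $1/\sqrt{ab}$ with the branch fixed by $X(0)=(2n+\alpha+\beta)/\alpha$, and the same identifications $\alpha_n\sim (a+b)/2$, $\beta_n\sim \left((b-a)/4\right)^2$. The only cosmetic deviation is that you posit an expansion in powers of $n$ with $T$-dependent coefficients (as in Section~2.1), whereas the paper expands $\mathbb{Y}(T)$ in powers of $T$ with $n$-dependent coefficients; both matching schemes lead to the stated asymptotics.
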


\begin{proof}

From Proposition \ref{CF} it follows that
\begin{align*}
\mathbb{R}_n(T)=&(2n+1+\alpha+\beta)\left[1+\frac{F(T,\alpha,\beta)}{n^2}\right],
\end{align*}
which gives  \eqref{CCFr}.

Next  we will use  Dyson's Coulomb fluid method to obtain the asymptotic behavior of  $\alpha_n(t)$. For the Pollaczek--Jacobi type weight $x^\alpha(1-x)^\beta{\rm e}^{-t/ x}$ we have
 $$v(x)=-{\rm ln}\, w(x)=\frac{t}{x}-\alpha {\rm ln}\, x-\beta {\rm ln}\,(1-x)\;\;\text{and}\;\;v'(x)=-\frac{t}{x^2}-\frac{\alpha}{x}-\frac{\beta}{x-1}.$$

Substituting  $v'(x)$ into \eqref{supp2} and combing with formulas \eqref{int1}, \eqref{int3} and \eqref{int4}, we obtain two algebraic equations with respect to $a$ and $b$
\begin{align}\label{}
 &\frac{a+b}{2(ab)^{3/2}}t+\frac{\alpha }{\sqrt{ab}}-\frac{\beta}{\sqrt{(1-a)(1-b)}}=0,\label{Calgb1}\\
 &\frac{t}{\sqrt{ab}}-\frac{\beta}{\sqrt{(1-a)(1-b)}}+2n+\alpha+\beta=0.\label{Calgb2}
\end{align}

Let $Y(t):=1/(\sqrt{ab})$. Subtracting  \eqref{Calgb1} from \eqref{Calgb2} we obtain
  \begin{equation}\label{Calgb3}
 \frac{a+b}{2}=\frac{2n+\alpha+\beta-(\alpha-t)Y(t)}{tY(t)^3}.
  \end{equation}
Substituting the equality above into \eqref{Calgb2} yields
\begin{equation*}
  tY(t)-\beta/\sqrt{1-\frac{2}{tY(t)^3}\left[2n+\alpha+\beta-(\alpha-t)Y(t)\right]+\frac{1}{Y(t)^2}}+2n+\alpha+\beta=0.
\end{equation*}
%Move the square root to one side, square both side of this equality.
 Setting $T=2n^2 t$ and $\mathbb{Y}(T)=Y(T/(2n^2))$,  after some simple calculations we obtain the quintic equation
\begin{align}\label{CDY}
\frac{T\mathbb{Y}(T)^3}{4n^2}-\frac{\beta^2\mathbb{Y}(T)^3T/(2n^2)}{2( \mathbb{Y}(T)T/(2n^2)+2n+\alpha+\beta)^2}&+\alpha \mathbb{Y}(T)\notag \\
-\frac{\mathbb{Y}(T)T/(2n^2)}{2}&-(2n+\alpha+\beta)=0
\end{align}
with $\mathbb{Y}(0)=(2n+\alpha+\beta)/\alpha$ (by setting $T=0$ in the equation  above).

Consider the terms at $T^0$ and $T$,
\begin{equation*}
  T \left(\frac{\widetilde{\mathbb{Y}}(T)^3}{4 n^2}-\frac{\beta ^2 \widetilde{\mathbb{Y}}(T)^3}{4 n^2 (\alpha +\beta +2 n)^2}-\frac{\widetilde{\mathbb{Y}}(T)}{4 n^2}\right)+(-\alpha -\beta -2 n+\alpha  \widetilde{\mathbb{Y}}(T))=0.
\end{equation*}
For large $n$, the term $-\beta ^2 \widetilde{\mathbb{Y}}(T)^3/(4 n^2 (\alpha +\beta +2 n)^2)$ does not affect the form of the solution. So solving the equation above  without this term  we obtain
$$\widetilde{\mathbb{Y}}(T)=\frac{\alpha +\beta +2 n}{\alpha }-\frac{T \left((\beta +2 n) \left(2 \alpha ^2+3 \alpha  \beta +(\beta+2n) ^2+6 \alpha  n\right)\right)}{4 \left(\alpha ^4 n^2\right)}+\mathcal{O}\left(T^2\right).$$
%Here, we've omit the complex root.
Then we assume that $\mathbb{Y}(T)$ has the following  form:
 $$\mathbb{Y}(T)=\sum_{j=0}^{\infty}b_jT^{j},\quad T\rightarrow0^+.$$
 Substituting the expression above into \eqref{CDY} with $\mathbb{Y}(0)=(2n+\alpha+\beta)/\alpha$, we have
\begin{align*}
\mathbb{Y}(T)\sim&\frac{2 n+\alpha +\beta }{\alpha }-\frac{(\alpha +\beta +2 n) [  \alpha\beta+ 2 n (\alpha+\beta +n)]}{2 \alpha ^4 n^2}T\\
&+\frac{ (\alpha +\beta +2 n)\eta_{n1}T^2 }{4 \alpha ^7 n^4}+\frac{ (\alpha +\beta +2 n)\eta_{n2}T^3 }{8 \alpha ^{10} n^6}+\mathcal{O}\left(T^4\right),\quad T\rightarrow0^+,
\end{align*}
where
\begin{align*}
\eta_{n1}&=12 n^4+24 n^3 (\alpha +\beta )+2 n^2 \left(7 \alpha ^2+18 \alpha  \beta +6 \beta ^2\right)+\alpha ^2 \beta  (\alpha +2 \beta )\\&~~~+2 \alpha  n (\alpha +\beta ) (\alpha +6 \beta ),\\
\eta_{n2}&=\beta ^3 (\alpha +2 n) \left(5 \alpha ^2+48 n^2+48 \alpha  n\right)+\beta  (\alpha +2 n) \left(\alpha ^2+6 n^2+6 \alpha  n\right) \\&~~~\cdot\left(\alpha ^2+24 n^2+24 \alpha  n\right)+\beta ^2 \left(5 \alpha ^4+288 n^4+576 \alpha  n^3+376 \alpha ^2 n^2+88 \alpha ^3 n\right)\\&~~~+2 n (\alpha +n) \left(\alpha ^4+48 n^4+96 \alpha  n^3+63 \alpha ^2 n^2+15 \alpha ^3 n\right).
\end{align*}

Substituting $\mathbb{Y}(T)$ into \eqref{Calgb3} with $t=T/(2n^2)$ we get
 $$\frac{a+b}{2}=\frac{2n+\alpha+\beta-(\alpha-T/2n^2)\mathbb{Y}(T)}{  T/(2n^2)  \mathbb{Y}(T)^3},$$
which gives \eqref{CCFa} for small $T$ and as $n\to \infty$. Since
\begin{align*}
 \beta_n(T/(2n^2))&\sim \frac{[(a+b)/2]^2-ab}{4}\\&=\frac{1}{4}\left[\left(\frac{2n+\alpha+\beta+(T/2n^2-\alpha)\mathbb{Y}(T)}{\mathbb{Y}(T)^3T/2n^2}\right)^2-\frac{1}{\mathbb{Y}(T)^2}\right],
\end{align*}
then  for small $T$ we deduce \eqref{CCFb}.

\end{proof}

Once the asymptotic formulas for $\mathbb{R}_n(T),\; \alpha_n(T/2n^2)$ are obtained, we have the following theorem.

\begin{theorem}
Let  $n\rightarrow\infty,\; t\rightarrow0^+$ and $T=2n^2t$ be fixed. For small $T$, the orthogonal polynomials $\widehat{P}_n(u)$ satisfy the confluent Heun equation,
\begin{align}\label{CD2}
 \widehat{P}_n''(u)+\left(\frac{\widetilde{\gamma}}{u}+\frac{\widetilde{\delta}}{u-1}+\widetilde{\epsilon}\right)\widehat{P}_n'(u)+\frac{\widetilde{\alpha} u-\widetilde{q}}{u(u-1)}\widehat{P}_n(u)=0
\end{align}
with parameters
$$\widetilde{\gamma}=-\alpha-\beta+2\lambda,\quad\widetilde{ \delta}=\beta+1,\quad \widetilde{\epsilon}=-s,\quad \widetilde{\alpha}=-s\lambda,\quad \widetilde{q}=\lambda  (\alpha -\lambda -s)-a.$$
Here   $\widehat{P}_n(u):=u^{-\lambda} P_n(1/u),$  $u:=1/z.$  The parameters $a, b, s, \lambda$ are given by
$$s=\frac{ [(\alpha -1) \alpha  (\alpha +1)^2-T]T}{2 n^2 \alpha ^2 \left(\alpha ^2-1\right) },\qquad a=\frac{\left(\alpha ^2+3\right) T^2-2\alpha^3(\alpha^2-1)T}{4 \alpha ^4 \left(\alpha ^2-1\right) },$$
$$\;b=n (n+ \alpha +\beta +1),\qquad \lambda=\frac{\alpha+\beta +1 \pm\sqrt{(\alpha +\beta +1)^2+4 b}}{2} .$$
\end{theorem}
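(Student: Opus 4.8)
The plan is to start from the exact second-order equation \eqref{CD1} and reduce its coefficients to functions of $R_n(t)$, $R_n'(t)$ and $\alpha_n(t)$ alone, inserting the double-scaling asymptotics only afterwards. First I would use \eqref{CDR} to eliminate $\tilde R_n$, \eqref{CDrm} and \eqref{CDr} to eliminate $r_n$ and $\tilde r_n$, and \eqref{CDSRt}--\eqref{CDSR} to eliminate the two sums, so that $Q_n(z,t)$ and $S_n(z,t)$ become rational in $z$ with coefficients built from $R_n,R_n',\alpha_n$. A useful preliminary observation is the location of the movable pole of $Q_n$ (and of the extra factor in the denominator of $S_n$): by \eqref{CDR} it sits at
\[
z=\frac{\tilde R_n}{\tilde R_n-R_n}=1-\frac{R_n}{2n+\alpha+\beta+1},
\]
which by \eqref{CCFr} equals $-F(T,\alpha,\beta)/n^2\to 0$. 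Hence this apparent singularity collides with $z=0$ as $n\to\infty$; after the substitution $u=1/z$ it merges with the irregular point $u=\infty$, and the four singular points of \eqref{CD1} collapse to the three points $u=0,1,\infty$ characteristic of a confluent Heun equation.

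Next I would set $T=2n^2t$, rewrite every quantity as a function of $T$ using $R_n'(t)=2n^2\,\mathbb R_n'(T)$, and substitute the expansions \eqref{CCFr} for $\mathbb R_n(T)$ and \eqref{CCFa} for $\alpha_n$, retaining the appropriate orders in $1/n$ to produce limiting coefficients $\mathbb Q_n(z,T)$ and $\mathbb S_n(z,T)$. Care is needed over the depth of each expansion: since $P_n'(z)$ is of order $O(n)$ relative to $P_n(z)$, a term of order $O(n^{-1})$ in the coefficient of $P_n'$ feeds back at $O(1)$ in the equation, exactly as noted after the preceding theorem, so $Q_n$ and $S_n$ must be expanded to different depths.

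I would then apply the change of variable $u=1/z$ together with the gauge $\widehat P_n(u)=u^{-\lambda}P_n(1/u)$. Using $d/dz=-u^2\,d/du$, the equation for $P_n$ transforms into one for $w(u)=P_n(1/u)$ whose first-order coefficient has, after combining the $t/z^2$ term (which scales like $T/n^2$), the shifted pole term and the Jacobian contribution, constant part $-s$; indeed one checks $-t-F(T,\alpha,\beta)/n^2=-s$ with $s$ the stated combination of $T,\alpha,\beta,n$. The gauge factor $u^{-\lambda}$ then shifts the residue at $u=0$ from $-(\alpha+\beta)$ to $2\lambda-(\alpha+\beta)$, producing $\widetilde\gamma=-\alpha-\beta+2\lambda$, while $\widetilde\delta=\beta+1$ and $\widetilde\epsilon=-s$ are unchanged. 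The exponent $\lambda$ is fixed by requiring $u=0$ (the image of $z=\infty$) to be a regular singular point of the resulting equation, i.e.\ by cancelling the $u^{-2}$ term in the coefficient of $\widehat P_n$; this yields the quadratic $\lambda^2-(\alpha+\beta+1)\lambda-b=0$ with $b=n(n+\alpha+\beta+1)$, and since $(\alpha+\beta+1)^2+4b=(2n+\alpha+\beta+1)^2$ its roots are $\lambda=-n$ and $\lambda=n+\alpha+\beta+1$, the first making $\widehat P_n$ the reversed polynomial $u^nP_n(1/u)$. Matching the remaining, simple-pole part of the $\widehat P_n$-coefficient against $(\widetilde\alpha u-\widetilde q)/[u(u-1)]$ in \eqref{CD2} then reads off $\widetilde\alpha=-s\lambda$ and $\widetilde q=\lambda(\alpha-\lambda-s)-a$.

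The step I expect to be the main obstacle is not the change of variables, which is routine, but the asymptotic bookkeeping in the presence of the $O(n)$ gauge exponent $\lambda$ and the $O(n^2)$ ``eigenvalue'' $b$. Because the gauge mixes orders, the $O(n^2)$, $O(n)$ and $O(1)$ pieces of $\mathbb S_n$ must cancel against the contributions of $u^{-\lambda}$ exactly so as to leave the finite (if $n$-dependent) confluent-Heun coefficients $\widetilde\alpha,\widetilde q$; verifying these cancellations is where the precise expansions \eqref{CCFr} and \eqref{CCFa}, to the orders quoted, are genuinely needed.
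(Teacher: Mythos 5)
Your proposal is correct and follows essentially the same route as the paper: eliminate $\tilde R_n,r_n,\tilde r_n$ and the sums via \eqref{CDR}--\eqref{CDSR}, insert the double-scaling asymptotics \eqref{CCFr} and \eqref{CCFa} to obtain the limiting equation with coefficients $\frac{s}{z^2}+\frac{\alpha+1}{z}+\frac{\beta+1}{z-1}$ and $-\frac{bz+a}{(z-1)z^2}$, then apply $u=1/z$ with the gauge $\widehat P_n(u)=u^{-\lambda}P_n(1/u)$. Your added details — the collapsing pole $z=1-R_n/(2n+\alpha+\beta+1)\to 0$ explaining the confluence, the identity $t+F/n^2=s$, and fixing $\lambda$ by cancelling the $u^{-2}$ term (with roots $\lambda=-n$, $\lambda=n+\alpha+\beta+1$) — are all consistent with, and make explicit, what the paper's terser proof leaves implicit.
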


\begin{proof}

Substituting \eqref{CDR}--\eqref{CDSR} into \eqref{CD1} and taking $T=2n^2t$ we find that the coefficients of the second order linear ordinary differential equation  \eqref{CD1}, $Q_n(z,T)$ and $S_n(z,T)$, are given in terms of  $\mathbb{R}_n(T)$, $\mathbb{R}'_n(T)$ and $\alpha_n(T/(2n^2))$.

Sending  $T\rightarrow0^+$, $n\to\infty$ and combining with the asymptotic expressions \eqref{CCFr} and \eqref{CCFa} yields
\begin{align*}
 \mathbb{ Q}_n(z,T) &=\frac{s}{z^2}+\frac{\alpha +1}{z}+\frac{\beta +1}{z-1}+\mathcal{O}\left(T^3\right),\quad T\rightarrow0^+, \\
  \mathbb{ S}_n(z,T) &=-\frac{b z+a}{(z-1) z^2}+\mathcal{O}\left(T^3\right),~~~~~~~~~~~\quad T\rightarrow0^+.
\end{align*}
Namely, we have
 \begin{align}\label{CD3}
P_n''(z)&+\left(\frac{s}{z^2}+\frac{\alpha +1}{z}+\frac{\beta +1}{z-1}\right)P_n'(z)-\frac{b z+a}{(z-1) z^2}P_n(z)=0.
\end{align}
Let $u=1/z$, then $$\widehat{P}_n(u):=u^{-\lambda} P_n(1/u)=z^{\lambda}P_n(z) $$ satisfy the confluent Heun equation \eqref{CD2}.
\end{proof}

%\begin{theorem}
%Sending $n\rightarrow\infty, t\rightarrow0^+$ and $T=2n^2t$ fixed, the orthogonal polynomial $\widehat{P}_n(u)$ satisfies the confluent Heun equation,
%\begin{align}\label{CD2}
%\widehat{P}_n''(u)+&\left(-\frac{\alpha+\beta}{u}+\frac{\beta +1}{u-1}-s\right)\widehat{P}_n'(u)+\frac{a u+b}{(u-1) u^2} \widehat{P}_n(u)=0,
%\end{align}
%where $$\widehat{P}_n(u):=P_n(\frac{1}{u})=P_n(z),\qquad s=\frac{ [(\alpha -1) \alpha  (\alpha +1)^2-T]T}{2 n^2 \alpha ^2 \left(\alpha ^2-1\right) },$$
%$$a=\frac{\left(\alpha ^2+3\right) T^2-2\alpha^3(\alpha^2-1)T}{4 \alpha ^4 \left(\alpha ^2-1\right) },\qquad\;b=n (n+ \alpha +\beta +1).$$
%\end{theorem}
%\begin{proof}
%Substituting \eqref{CDR}--\eqref{CDSR} into \eqref{CD1}, let $T=2n^2t$, the coefficients of \eqref{CD1} represented by $\mathbb{R}_n(T)$, $\mathbb{R}'_n(T)$ and $\alpha_n(T/2n^2)$. Combine with the asymptotic expression \eqref{CCFr} and \eqref{CCFa}, finds,
%\begin{align*}
 %\mathbb{ Q}_n(z,T) &=\frac{s}{z^2}+\frac{\alpha +1}{z}+\frac{\beta +1}{z-1}+\mathcal{O}\left(T^3\right),\quad T\rightarrow0^+, \\
  %\mathbb{ S}_n(z,T) &=-\frac{b z+a}{(z-1) z^2}+\mathcal{O}\left(T^3\right),\quad T\rightarrow0^+.
%\end{align*}
%namely,
 %\begin{align}\label{CD3}
%P_n''(z)&+\left(\frac{s}{z^2}+\frac{\alpha +1}{z}+\frac{\beta +1}{z-1}\right)P_n'(z)-\frac{b z+a}{(z-1) z^2}P_n(z)=0.
%\end{align}
%where $a,\; b\; s$ list in above theorem.
%Let $u=1/z$, and $\widehat{P}_n(u):=P_n(\frac{1}{u})=P_n(z)$ obtain confluent Heun equation \eqref{CD2}.
%\end{proof}

\begin{corollary}
 The confluent Heun equation \eqref{CD3}  %for the weight  $x^\alpha(1-x)^\beta{\rm e}^{-t/ x}$
 with $t=0$ reduces to the same Jacobi differential equation as in Corollary~\ref{Zc}:
 \begin{align}\label{CD4}
P_n''(z)&+\left(\frac{\alpha +1}{z}+\frac{\beta +1}{z-1}\right)P_n'(z)-\frac{n (n+ \alpha +\beta +1)}{(z-1) z}P_n(z)=0.
\end{align}
\end{corollary}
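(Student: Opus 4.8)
The plan is to argue directly from the explicit parametrization, since the corollary asserts a degeneration of a single equation rather than a new structural fact. First I would record the elementary observation that the scaling $T=2n^2t$ forces $T=0$ precisely when $t=0$. Then, reading off the displayed parameter formulas
$$s=\frac{[(\alpha-1)\alpha(\alpha+1)^2-T]T}{2n^2\alpha^2(\alpha^2-1)},\qquad a=\frac{(\alpha^2+3)T^2-2\alpha^3(\alpha^2-1)T}{4\alpha^4(\alpha^2-1)},$$
I would note that both $s$ and $a$ are polynomials in $T$ with no constant term, so that $s=a=0$ at $T=0$, whereas $b=n(n+\alpha+\beta+1)$ carries no dependence on $t$ and is therefore unchanged.

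Next I would substitute $s=0$ and $a=0$ into \eqref{CD3}. The term $s/z^2$ in the coefficient of $P_n'(z)$ disappears, removing the higher-order pole at $z=0$; this is exactly the degeneration that collapses the confluent-Heun structure (the irregular behaviour at the origin that appears under $u=1/z$) back to a Fuchsian equation with three regular singular points at $z=0,1,\infty$. In the coefficient of $P_n(z)$ the numerator $bz+a$ becomes $bz$, and the cancellation
$$\frac{bz}{(z-1)z^2}=\frac{b}{(z-1)z}$$
reduces the zeroth-order coefficient to $-b/[(z-1)z]$. Inserting $b=n(n+\alpha+\beta+1)$ yields exactly \eqref{CD4}.

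Finally I would remark that \eqref{CD4} is literally the equation \eqref{ZP3} obtained in Corollary~\ref{Zc} (up to the harmless reordering $z(z-1)=(z-1)z$), which confirms the assertion that the two $t=0$ limits coincide and both reproduce the classical Jacobi differential equation. As a consistency cross-check one could instead recompute from scratch: at $t=0$ the weight $x^\alpha(1-x)^\beta{\rm e}^{-t/x}$ reduces to the Jacobi weight $x^\alpha(1-x)^\beta$ on $(0,1]$, and the ladder-operator computation of Corollary~\ref{Zc}, giving $A_n$, $B_n$ as in \eqref{ZA}--\eqref{ZB} together with $R_n=2n+1+\alpha+\beta$, delivers the same equation through \eqref{H}. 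I do not expect any genuine obstacle here: the only point requiring care is verifying that the $1/z^2$-coefficient $s$ vanishes identically at $T=0$, since this is precisely what guarantees that the limiting equation is genuinely hypergeometric (Jacobi) rather than confluent-Heun.
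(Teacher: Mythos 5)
Your proposal is correct and follows essentially the same route as the paper's proof: the paper likewise observes that $t=0$ forces $T=0$, hence $s=a=0$, so that \eqref{CD3} collapses to \eqref{CD4}, and it too adds the remark that at $t=0$ the weight becomes the classical Jacobi weight, with the ladder-operator derivation of Corollary~\ref{Zc} serving as the alternative confirmation. Your write-up simply makes explicit the cancellation $bz/[(z-1)z^2]=b/[(z-1)z]$ and the vanishing of the constant terms in $s$ and $a$, which the paper states without detail.
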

 \begin{proof}
 When $t=0$, we have $T=0$, $a=s=0$, and equation \eqref{CD3} reduces to  \eqref{CD4}.

 Actually, the weight  $w(t,x)=x^\alpha(1-x)^\beta{\rm e}^{-t/ x}$ is the deformed Jacobi weight. When $t=0$, the weight reduces to the  classical Jacobi weight $w(0,x)=x^\alpha(1-x)^\beta$. See the proof of  Corollary \ref{Zc}  for ladder operators approach to deduce the Jacobi differential equation.
 \end{proof}

\subsection{$(1-x^2)^\alpha(1-k^2x^2)^\beta, \;x\in[-1,1], \alpha,\,\beta>0, k^2\in(0,1)$}
 The  weight $(1-x^2)^\alpha(1-k^2x^2)^\beta$ is the generalization of the weight function  $\big[(1-x^2)(1-k^2x^2)\big]^{-1/2}$ studied by C. J. Rees in 1945  \cite{R1945}.   It has strong relation with the famous string theory, see Basor, Chen and Haq \cite{BasorChen}. In \cite{BasorChen} the asymptotic expressions of recurrence coefficients $\beta_n(k^2)$ and of the second coefficients $p(n)$ of monic polynomials $P_n(z)$  were obtained.  The large $n$ asymptotics of Hankel determinants was also studied.

The second order linear ordinary differential equation for $P_n(z)$, associated with the weight $(1-x^2)^\alpha(1-k^2x^2)^\beta$, reads,
\begin{equation}\label{BC1}
P''_n(z)+\left(\frac{X'(z)}{2X(z)}-\frac{M'_n(z)}{M_n(z)}\right)P'_n(z)+\left(\frac{L_n(z)M'_n(z)}{Y(z)M_n(z)}+\frac{U_n(z)}{Y(z)}\right)P_n(z)=0,
\end{equation}
where
\begin{align*}
X(z):=&(1-z^2)^{2\alpha+2}(1-k^2z^2)^{2\beta+2},\\
Y(z):=&(1-z^2)(1-k^2z^2),\\
M_n(z):=&-2\left(n+1/2+\alpha+\beta\right)k^2z^2-C_n,\\
L_n(z):=&z\left[nk^2z^2-n(k^2+1)+2k^2(n+1/2+\alpha+\beta)\beta_n-2k^2p(n)\right],\\
U_n(z):=&-k^2z^2n(n+2\alpha+2\beta+3)+2k^2(2n+1+2\alpha+2\beta)(p(n)-\beta_n)\\
&+nk^2(n+1+2\beta)+n(n+1+2\alpha)
\end{align*}
and
$$C_n=2k^2(n+3/2+\alpha+\beta)(\beta_n+\beta_{n+1})-2\left[(n+\beta+1/2)k^2+n+\alpha+1/2\right]-4k^2p(n).$$
The coefficients of  the second order differential equation  (\ref{BC1}) depend on  $p(n)$ and $\beta_n(k^2)$ after  substituting equalities above into it. From  Sections 2.1 and 2.2 we know that in order to reduce   equation (\ref{BC1}) to  the Heun equation we first need  to know the asymptotic expressions of $p(n)$ and $\beta_n(k^2)$.

%We will further approximated the parameters appeared in the weight.
\begin{theorem}\label{Jacobi1}
Let $P_n(x)$ be the monic orthogonal polynomials with respect to the weight $(1-x^2)^\alpha(1-k^2x^2)^\beta$, $n\rightarrow \infty,\; k^2\rightarrow0^+, \;\beta\rightarrow\infty$ and $k^2\beta=t$ be fixed.  Under these assumptions the weight reduces to  $(1-x^2)^\alpha{\rm e}^{-t x^2}$. Then if  $\widehat{P}_n(u)=P_n(z^2)$,
then $\widehat{P}_n(u)$ satisfies the confluent Heun equation
\begin{equation}\label{BC4}
\widehat{P}_n''(u)+\Big(\frac{\widetilde{\gamma}}{u}+\frac{\widetilde{\delta}}{u-1}+\widetilde{\epsilon}\Big)\widehat{P}_n'(u) +\left(\frac{\widetilde{\alpha} u-\widetilde{q}}{u(u-1)}\right)\widehat{P}_n(u)=0
\end{equation}
with parameters
$$\widetilde{\gamma}=1/2,\quad \widetilde{\delta}=\alpha+1,\quad \widetilde{\epsilon}=-t,\quad \widetilde{a}=nt/2,\quad \widetilde{q}=n(n+2\alpha+t+1)/4.$$
\end{theorem}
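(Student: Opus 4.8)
The plan is to follow exactly the strategy established in Sections~2.1 and 2.2, namely: first obtain the large-$n$ asymptotics of the auxiliary quantities appearing in the coefficients of the second order equation \eqref{BC1}, and then substitute these into \eqref{BC1}, retaining only the leading contributions in each coefficient so that the rational coefficients collapse to those of a confluent Heun equation. The starting point is the observation that under the scaling $k^2\to 0^+$, $\beta\to\infty$ with $k^2\beta=t$ fixed, the factor $(1-k^2x^2)^\beta\to\rme^{-tx^2}$, so the problem reduces to analyzing orthogonal polynomials for the weight $(1-x^2)^\alpha \rme^{-tx^2}$ on $[-1,1]$. The first concrete step is therefore to identify what $p(n)$ and $\beta_n(k^2)$, and hence $C_n$, $M_n(z)$, $L_n(z)$ and $U_n(z)$, become in this double-scaling limit.

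First I would derive the asymptotic expansions of $p(n)$ and $\beta_n$ in the regime $n\to\infty$, $k^2\to0^+$, $\beta\to\infty$, $k^2\beta=t$. By analogy with Propositions~\ref{Zpropo2} and \ref{CF}, I expect that $\beta_n\sim 1/4$ to leading order (since the effective support is $[-1,1]$, giving $(b-a)/4 = 1/2$ and $\beta_n\sim 1/4$ via \eqref{asym2}), with $t$-dependent corrections of order $1/n$ or $1/n^2$; similarly $p(n)$, the subleading coefficient of $P_n$, should have a controlled expansion. The cleanest route is to use the relations for the recurrence coefficients derived in \cite{BasorChen} (or to re-derive them from the supplementary conditions $(S_1)$, $(S_2)$, $(S_2')$ applied to this weight) and then feed in the Coulomb-fluid prediction \eqref{asym2} together with the stability conditions \eqref{supp2} for the potential $v(x)=-\alpha\ln(1-x^2)+tx^2$ to fix the $t$-dependence. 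Because the weight is even, the support is symmetric $[-b,b]$ and $\alpha_n\to 0$, which simplifies the bookkeeping considerably.

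Next I would carry out the change of variable $u=z^2$, under which $\widehat P_n(u)=P_n(z)$ (note the statement writes $P_n(z^2)$, meaning the even polynomial in $z$ becomes a polynomial in $u$), and rewrite \eqref{BC1} in the variable $u$. This is where the even symmetry is essential: the map $u=z^2$ sends the singular points $z=\pm1$ to the single point $u=1$, the points $z=\pm1/k$ toward $u=\iy$ as $k\to0$, and $z=0$ to $u=0$, producing exactly the three-singularity structure $u=0,1$ together with the regular point at infinity carrying the $\widetilde\epsilon$ term. I would compute $d/dz$ in terms of $d/du$ (using $P'_n(z)=2z\,\widehat P_n'(u)$ and the second-derivative chain rule), substitute the leading-order forms of $M_n$, $L_n$, $U_n$, and collect the coefficient of $\widehat P_n'(u)$ and of $\widehat P_n(u)$. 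After discarding the terms that vanish in the limit (keeping $\mathcal{O}(n^{-2})$ accuracy in the first-derivative coefficient and $\mathcal{O}(n^{-1})$ in the zeroth-order coefficient, as justified in Section~2.1 by $P_n'=n\sum b_jP_j$), I expect the coefficients to reduce precisely to $\widetilde\gamma/u+\widetilde\delta/(u-1)+\widetilde\epsilon$ and $(\widetilde\alpha u-\widetilde q)/[u(u-1)]$ with the stated parameter values.

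The main obstacle will be the asymptotic analysis of $p(n)$ and $\beta_n$ in this particular triple limit, since two of the parameters ($k^2$ and $\beta$) are sent to their limits simultaneously with $n$ while their product is held fixed; one must verify that the limit is uniform enough that the leading terms of $C_n$, $M_n$, $L_n$, $U_n$ survive intact and that the subleading terms genuinely drop out at the advertised orders. A secondary subtlety is the handling of the $k^2$-dependence inside $M_n(z)$ and $U_n(z)$: the coefficient $-2(n+1/2+\alpha+\beta)k^2$ in $M_n$ must be tracked carefully, because although $k^2\to0$ it is multiplied by $\beta\to\infty$, so the product $k^2\beta=t$ contributes at leading order and is exactly what produces the confluent term $\widetilde\epsilon=-t$ and the accessory structure $\widetilde q=n(n+2\alpha+t+1)/4$. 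Once the bookkeeping of which $k^2$-factors pair with $\beta$ is done correctly, the reduction to \eqref{BC4} should be a direct, if lengthy, computation analogous to the proof in Section~2.1.
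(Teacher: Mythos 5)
You are correct, and your reduction has the same architecture as the paper's: feed asymptotics of $p(n)$ and $\beta_n(k^2)$ into \eqref{BC1}, pass to the confluent limit, then use $u=z^2$ (reading $\widehat P_n(u)=P_n(\sqrt u)$, as you do; the statement's ``$P_n(z^2)$'' is a slip) to merge $z=\pm1$ into $u=1$ and obtain \eqref{BC4}. The genuine difference is the treatment of the key input, which is exactly where your ``main obstacle'' lives: the paper never derives the expansions of $p(n)$ and $\beta_n(k^2)$ at all --- it quotes them from Kuijlaars--McLaughlin--Van Assche--Vanlessen \cite{Kui2004} and Basor--Chen--Haq \cite{BasorChen}, valid as $n\to\infty$ with $k,\beta$ \emph{fixed}, substitutes them to get the intermediate equation \eqref{BC2} (which still carries $k$ and $\beta$), and only afterwards performs the confluence along the explicit path $k=t^{1/4}/\sqrt{n}$, $\beta=n\sqrt{t}$, turning the triple limit into a single limit $n\to\infty$ and yielding \eqref{BC3}. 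You instead propose to produce the expansions directly in the simultaneous scaling regime, via the ladder-operator relations of \cite{BasorChen} calibrated by the Coulomb-fluid conditions \eqref{supp2} and \eqref{asym2} for the limiting potential $v(x)=-\alpha\ln(1-x^2)+tx^2$. What each route buys: the paper's is short, but it imports Riemann--Hilbert asymptotics and silently commits to a particular order of limits (first $n\to\infty$ at fixed $k,\beta$, then confluence), so the uniformity question you raise is bypassed rather than answered; your route is self-contained and addresses uniformity head-on, at the price of substantial computation. On your route, note one cancellation that the quoted formulas make automatic: $p(n)$ contains the combination $(\alpha-\beta)/4+\beta\bigl(1-\sqrt{1-k^2}\bigr)/(2k^2)$, whose individually divergent pieces $\mp\beta/4$ cancel in the scaling limit, leaving $\alpha/4+t/16+\cdots$; without this pairing of $k^2$-factors with $\beta$, the quantities $C_n$, $M_n$, $L_n$, $U_n$ would not have finite limits --- so the bookkeeping you flag at the end is indeed the crux, and it works out.
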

\begin{proof}

From Kuijlaars, McLaughlin, Assche and Vanlessen \cite{Kui2004}, and Basor, Chen and Haq \cite{BasorChen}, the asymptotic   expressions as $n\rightarrow\infty$ of $p(n),\;\beta_n(k^2)$  are known:
\begin{align*}
&\beta_n(k^2)=\frac{1}{4}-\frac{4\alpha^2-1}{16n^2}+\frac{4\alpha^2-1}{8n^3}\left(\alpha+\beta-\frac{\beta}{\sqrt{1-k^2}}\right)+\mathcal{O}\left(\frac{1}{n^4}\right),\\
&p(n):=-\frac{n}{4}+\frac{\alpha-\beta}{4}+\frac{1}{8}+\frac{\beta(1-\sqrt{1-k^2})}{2k^2}-\frac{4\alpha^2-1}{16n^3}\left(\alpha+\beta-\frac{1}{2}-\frac{\beta}{\sqrt{1-k^2}}\right)^2\\
&~~~~~~~~~~~~~~~~-\frac{4\alpha^2-1}{16n}+\frac{4\alpha^2-1}{16n^2}\left(\alpha+\beta-\frac{1}{2}-\frac{\beta}{\sqrt{1-k^2}}\right)+\mathcal{O}\left(\frac{1}{n^4}\right),
\end{align*}

Since the coefficients of \eqref{BC1} are represented by $p(n),\;\beta_n(k^2)$,  we have the following equation as  $n\rightarrow\infty$:
\begin{align}\label{BC2}
P''_n(z)&+\left(\frac{\beta  k}{k z-1}+\frac{\beta  k}{k z+1}+\frac{\alpha +1}{z-1}+\frac{\alpha +1}{z+1}\right)P'_n(z)+\left(\frac{n^2}{1-z^2}\right.\notag\\
&\left.+\frac{n \left(2 \alpha +2 \beta -2 \beta  \sqrt{1-k^2}-2 \alpha  k^2 z^2-2 \beta  k^2 z^2-k^2 z^2+1\right)}{\left(z^2-1\right) \left(k^2 z^2-1\right)}\right)P_n(z)=0.
\end{align}
 Let $k^2\rightarrow0^+,\; \beta\rightarrow\infty$ and $k^2\beta=t$ be fixed. The weight  $(1-x^2)^\alpha(1-k^2x^2)^\beta$  reduces to the weight $(1-x^2)^\alpha{\rm e}^{-t x^2}.$
 Assuming  $k=t^{1/4}/\sqrt {n},\;\beta=n \sqrt {t} $, equation \eqref{BC2} reduces to the following equation as $n\to \infty$:
\begin{align}\label{BC3}
P''_n(z)&+\left(\frac{\alpha +1}{z-1}+\frac{\alpha +1}{z+1}-2tz\right)P'_n(z)-\frac{n \left(n+2 \alpha +1- \left(2 z^2-1\right)t\right)}{z^2-1}P_n(z)=0.
\end{align}
Let $u=z^2$ and $\widehat{P}_n(u):=P_n(\sqrt u)$.
%then find,
%$$P_n'(z)=2\sqrt{u}\widehat{P}_n'(u),\quad \text{and} \quad P_n''(z)=2\widehat{P}_n'(u)+4u \widehat{P}_n''(u). $$
Substituting this into \eqref{BC3}, after some direct calculations, we obtain the confluent Heun equation \eqref{BC4}.
\end{proof}

\begin{corollary}
When $k=0$, the orthogonal polynomials reduce to the Jacobi polynomials, and the confluent Heun equation reduces to the Jacobi differential equation \begin{align}\label{BC5}
P''_n(z)&+\left(\frac{\alpha +1}{z-1}+\frac{\alpha +1}{z+1}\right)P'_n(z)-\frac{n \left(n+2 \alpha +1\right)}{z^2-1}P_n(z)=0.
\end{align}
\end{corollary}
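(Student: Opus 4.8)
The plan is to treat this purely as a degeneration: the whole point is that the auxiliary parameter $t$ occurring in Theorem~\ref{Jacobi1} is $t=k^2\beta$, so setting $k=0$ forces $t=0$ simultaneously, and the corollary becomes a specialization of equations already in hand rather than a new computation. First I would record that at $k=0$ the weight $(1-x^2)^\alpha(1-k^2x^2)^\beta$ collapses to the classical ultraspherical weight $(1-x^2)^\alpha$, so the orthogonal polynomials become Gegenbauer (Jacobi) polynomials; this is the ``reduce to the Jacobi polynomials'' half of the statement.

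For the equation, the shortest route is to substitute $t=0$ directly into the limiting equation \eqref{BC3}. The drift then loses its $-2tz$ term and the potential loses its $-(2z^2-1)t$ contribution, leaving exactly \eqref{BC5}; no change of variables is needed. To match the statement at the level of the confluent Heun equation \eqref{BC4}, I would perform the same specialization there. With $t=0$ the parameters degenerate to $\widetilde{\epsilon}=0$, $\widetilde{a}=0$ and $\widetilde{q}=n(n+2\alpha+1)/4$, while $\widetilde{\gamma}=1/2$ and $\widetilde{\delta}=\alpha+1$ are unchanged; the $\widetilde{\epsilon}$ and $\widetilde{a}u$ terms drop out and \eqref{BC4} becomes a Gauss hypergeometric equation in $u$. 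Undoing the change of variables $u=z^2$, $\widehat{P}_n(u)=P_n(\sqrt u)$, while tracking $\widehat{P}_n'(u)=P_n'(z)/(2z)$ and $\widehat{P}_n''(u)=\bigl(zP_n''(z)-P_n'(z)\bigr)/(4z^3)$, returns the equation to the variable $z$ and reproduces \eqref{BC5}, confirming that the confluent Heun equation degenerates to the Jacobi (hypergeometric) equation.

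As an independent check, and in the spirit of Corollary~\ref{Zc}, I would reprove \eqref{BC5} from the ladder operators for the bare weight $w(x)=(1-x^2)^\alpha$. Here $v(x)=-\alpha\ln(1-x^2)$ and $v'(x)=2\alpha x/(1-x^2)$ has simple poles at $x=\pm1$. Integrating by parts in \eqref{A}--\eqref{B} produces $A_n(z)$ and $B_n(z)$ as rational functions with poles only at $z=\pm1$; condition $(S_1)$ then fixes the single residue constant (the analogue of $R_n=2n+1+\alpha+\beta$ in Corollary~\ref{Zc}), hence also $\sum_{j=0}^{n-1}A_j(z)$. Feeding these into \eqref{H} gives \eqref{BC5} directly.

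There is no genuine obstacle in this corollary: the content is entirely a consistent $k\to0$ (equivalently $t\to0$) limit of results already established. The only step requiring real care is the bookkeeping of the substitution $u=z^2$ when translating the degenerate confluent Heun equation \eqref{BC4} back into the $z$-variable form \eqref{BC5}, since the second-derivative term picks up the extra factors recorded above; getting those factors right is what makes the two routes agree.
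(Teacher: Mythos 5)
Your proposal is correct and follows essentially the same route as the paper: the paper's proof likewise notes the weight collapses to the classical Jacobi weight, sets $k=0$ in \eqref{BC2} and $t=k^2\beta=0$ in \eqref{BC3}, and points to the ladder-operator argument of Corollary~\ref{Zc} as the alternative derivation. Your extra step of degenerating the confluent Heun equation \eqref{BC4} at $t=0$ and undoing the substitution $u=z^2$ (with the correct factors $\widehat{P}_n'(u)=P_n'(z)/(2z)$, $\widehat{P}_n''(u)=\bigl(zP_n''(z)-P_n'(z)\bigr)/(4z^3)$) is a slightly more complete check of the statement as literally phrased, but it is the same specialization idea, carried out correctly.
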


\begin{proof}
If  $k=0$, then the weight $(1-x^2)^\alpha(1-k^2x^2)^\beta$ reduces to the classical Jacobi weight $(1-x)^\alpha(1+x)^\alpha$ %$(\alpha1=\beta1)$
(see also Basor and Chen \cite{BasorChen2005}). We can proceed similarly as in   Corollary \ref{Zc}. Also put $k=0$ in \eqref{BC2}  and $t=k^2\beta=0$ in \eqref{BC3}.
\end{proof}

\begin{rem}

Recall from Theorem \ref{Jacobi1} that when $k^2\rightarrow0^+, \;\beta\rightarrow\infty$ and $k^2\beta=t$ is fixed, the weight  $(1-x^2)^\alpha(1-k^2x^2)^\beta$ reduces to  $(1-x^2)^\alpha{\rm e}^{-t x^2}$. Then the normalization constant for orthogonal polynomials reads
\begin{equation*}
  h_j(t)=\int_{-1}^1P_j^2(x,t)(1-x^2)^\alpha{\rm e}^{-t x^2}{\rm d}x.
\end{equation*}
Changing  the variable $x^2=s$, we see that  the even case and the odd case of $j$ have different normalization, in particular,
 \begin{align*}
  h_{2n}(t)&=\int_{-1}^1P_{2n}^2(x,t)(1-x^2)^\alpha{\rm e}^{-t x^2}{\rm d}x\\
           &=2\int_{0}^1P_{2n}^2(\sqrt{s},t)(1-s)^\alpha\frac{{\rm e}^{-t s}}{2\sqrt{s}}{\rm d}s\\
           &=\int_{0}^1\widetilde{P}_n^2(s,t)s^{-\frac{1}{2}}(1-s)^\alpha{\rm e}^{-t s}{\rm d}s=:\widetilde{h}_n(t),
\end{align*}
and
\begin{align*}
  h_{2n+1}(t)&=\int_{-1}^1P_{2n+1}^2(x,t)(1-x^2)^\alpha{\rm e}^{-t x^2}{\rm d}x\\
           &=2\int_{0}^1P_{2n+1}^2(\sqrt{s},t)(1-s)^\alpha\frac{{\rm e}^{-t s}}{2\sqrt{s}}{\rm d}s\\
           &=\int_{0}^1\widehat{P}_n^2(s,t)s^{\frac{1}{2}}(1-s)^\alpha{\rm e}^{-t s}{\rm d}s=:\widehat{h}_n(t).
\end{align*}
Here
\begin{align*}
  P_{2n}(\sqrt{s},t)&=(\sqrt{s})^{2n}+p(2n,t)(\sqrt{s})^{2n-2}+\ldots+P_{2n}(0,t)\\
  &=s^n+\widetilde{p}(n,t)s^{n-1}+\ldots+\widetilde{P}_n(0,t):=\widetilde{P}_n(s,t),
\end{align*}
and
\begin{align*}
    P_{2n+1}(\sqrt{s},t)&=(\sqrt{s})^{2n+1}+p(2n,t)(\sqrt{s})^{2n-1}+\ldots+const\cdot \sqrt{s}\\
  &=\sqrt{s}\left(s^n+\widehat{p}(n,t)s^{n-1}+\ldots+const\right):=\sqrt{s} \widehat{P}_n(s,t).
\end{align*}
The polynomials $\widetilde{P}_n(s,t)$ and $\widehat{P}_n(s,t)$ are monic polynomials of degree $n$ in the variable $s$ and they are orthogonal with respect to $s^{-\frac{1}{2}}(1-s)^\alpha{\rm e}^{-t s}$ and $s^{\frac{1}{2}}(1-s)^\alpha{\rm e}^{-t s}$ over $(0,1]$ respectively. These weights are the special cases of the weight $s^{\alpha}(1-s)^\beta{\rm e}^{-t s}$ for $\beta=-\frac{1}{2}$ and $\beta=\frac{1}{2}$ respectively, see\cite{ZGCZ}.

The Hankel determinants generated by $s^{-\frac{1}{2}}(1-s)^\alpha{\rm e}^{-t s}$ and $s^{\frac{1}{2}}(1-s)^\alpha{\rm e}^{-t s}$, $0< s\leq 1$ are defined by
\begin{align*}
  \widetilde{D}_m(t):= &{\rm det}\left(\int_{0}^\infty s^{i+j-\frac{1}{2}}(1-s)^\alpha{\rm e}^{-t s}{\rm d}s \right)_{i,j=0}^{n-1}=\prod_{l=0}^{m-1}\widetilde{h}_l(s),\\
 \widehat{ D}_m(t):= &{\rm det}\left(\int_{0}^\infty s^{i+j+\frac{1}{2}}(1-s)^\alpha{\rm e}^{-t s}{\rm d}s\right)_{i,j=0}^{n-1}=\prod_{l=0}^{m-1}\widehat{h}_l(s)
\end{align*}
respectively. Hence,
\begin{equation*}
 D_n(t)=\prod_{j=0}^{n-1}h_j(t)=\begin{cases}
\widetilde{D}_{k+1}\widehat{ D}_k&~~ \text{n=2k+1,}\\
\widetilde{D}_k\widehat{ D}_k& ~~\text{n=2k.}
\end{cases}
\end{equation*}
The Hankel determinants for large $n$ are very interesting, we shall not pursue this subject further as it lies beyond the scope of this paper. See   Lyu, Chen and Fan \cite{LC2} for the Gaussian weight, also the monograph by Szeg$\ddot{o}$ \cite{Szego}.
\end{rem}

\section{Laguerre type weights}

In this section we consider two deformed Laguerre type weights: $x^\alpha(x+t)^\lambda{\rm e}^{-x}$ (see \cite{EC,CM2012}) and $x^\alpha{\rm e}^{-x-t/x}$ (see \cite{ChenIts2010}). The weight $x^\alpha(x+t)^\lambda{\rm e}^{-x}$ appear in multiple-input multiple-output (MIMO) wireless communication systems. The technique of ladder operators was used to study Hankel determinants  and to show connection to the  Jimbo-Miwa-Okamoto $\sigma$-form of the fifth  Painlev\'{e} equation in  \cite{EC,CM2012}. The deformed Laguerre weight $x^\alpha{\rm e}^{-x-t/x}$ (see \cite{ChenIts2010}) appear in mathematical physics and integrable quantum field theory of finite temperature (see \cite{L2001}). The technique of ladder operators and the  Riemann-Hilbert approach gives  connection of recurrence coefficients and of the  logarithmic derivative of the Hankel determinant to the solutions of the third   Painlev\'{e} equation  (or the $\sigma$-form of it).

\subsection{$x^\alpha(x+t)^\lambda{\rm e}^{-x}, \;x\in[0,\infty),\; t,\,\alpha,\,\lambda>0$}

From \cite{EC}, the second order differential equation satisfied by monic polynomials $P_n(x)$ orthogonal  with respect to the weight $w(x,t,\lambda)=x^\alpha(x+t)^\lambda{\rm e}^{-x}$ is of the following form:
\begin{align}\label{PEC1}
P_n''(z)+Q_n(z,t)P_n'(z) +S_n(z,t)P_n(z)=0,
\end{align}
where
\begin{align*}
Q_n(z,t)=&\frac{\alpha+1}{z}+\frac{\lambda+1}{z+t}-1-\frac{1}{z+t[1-R_n(t)]},\\
S_n(z,t)=&\frac{t \left(r_n(t)+n R_n(t)\right)}{z (t+z) \left(z+t \left(1-R_n(t)\right)\right)}+\frac{n-\sum_{j=0}^{n-1}R_j(t)}{z}+\frac{\sum_{j=0}^{n-1}R_j(t)}{t+z}
\end{align*}
with $R_n(t)$, $r_n(t)$ defined by
\begin{align}
 r_n(t)&=\frac{t  R'_n(t)+\lambda -R_n(t) \left(t+2n+\alpha +\lambda -t R_n(t)\right)}{2}\label{ECr}, \\
 \sum_{j=0}^{n-1}R_j(t)&=\frac{n (\alpha +\lambda +n)-t r_n(t)-\beta _n(t)}{t}\label{ECSR}
\end{align}
and
\begin{equation}
\beta _n(t)=\frac{1}{1-R_n(t)}\left(n (\alpha +n)+\frac{r_n(t){}^2-\lambda  r_n(t)}{R_n(t)}+(\alpha +\lambda +2 n) r_n(t)\right).\label{ECb}
\end{equation}

Obviously the coefficients of \eqref{PEC1} are given in terms of $n,\;\lambda,\;t$, $R_n(t)$ and its derivative. First we will obtain the asymptotic formula for $R_n(t)$ and then show how the second order differential equation  \eqref{PEC1} can be reduced to the Heun equation.
\begin{proposition}
For $t>0$ and $n\rightarrow\infty$ we have
\begin{align}\label{}
 R_n(t)= \frac{\lambda }{2(nt)^{1/2}}-&\frac{  4 \lambda\left(\alpha ^2+t^2+2 t (\alpha +\lambda +1)\right)-\lambda}{64  (nt)^{3/2}}\notag
 \\&+\frac{\lambda ^2 \left(4 t^2-4 \alpha ^2+1\right)}{64t^2 n^2 }+\mathcal{O}\left(\frac{1}{n^{5/2}}\right).\label{ECR}
\end{align}
\end{proposition}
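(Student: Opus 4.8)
The plan is to follow the template established for Proposition~\ref{Zpropo2}: convert the nonlinear relations satisfied by $R_n(t)$ into a single autonomous equation, insert an asymptotic ansatz, and match coefficients order by order. The crucial structural difference is that here $t$ is held \emph{fixed} as $n\to\infty$ --- there is no combined scaling such as $T=nt$ --- and the target expansion \eqref{ECR} advances in powers of $n^{-1/2}$ rather than $n^{-1}$. The half-integer gauge is dictated by the leading term $\lambda/(2\sqrt{nt})$, so the natural ansatz is
\begin{equation*}
R_n(t)=\sum_{k\ge 1}c_k(t)\,n^{-k/2},\qquad n\to\infty,
\end{equation*}
and the whole proof reduces to identifying the $c_k(t)$, checking that $c_2(t)=0$, and reproducing the displayed $c_1,c_3,c_4$.

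First I would assemble the governing equation for $R_n(t)$. The relations \eqref{ECr}--\eqref{ECb} already express $r_n(t)$ and $\beta_n(t)$ rationally in terms of $R_n(t)$ and $R_n'(t)$; eliminating them (together with the closing Toda-type/supplementary identity used in \cite{EC}) produces a second-order nonlinear ODE for $R_n(t)$, a scaled form of the fifth Painlev\'{e} equation, entirely analogous to \eqref{nonlinear eq Rn section 2.1}. I would record this ODE keeping its explicit $n$-dependence through the combinations $2n+\alpha+\lambda$ visible, since those powers of $n$ are precisely what organise the dominant balance.

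Next I would extract the leading behaviour. Because $R_n(t)\to 0$, I would expand every nonlinearity --- in particular the factors $1/R_n$ and $1/(1-R_n)$ inherited from \eqref{ECb} --- for small $R_n$ and collect the equation by explicit powers of $n$. As in the passage from \eqref{tR} to its leading algebraic form, dropping the derivative contributions first isolates a dominant balance; this balance is essentially quadratic in $R_n$ with an $n$-dependent coefficient, so taking its square root produces a power $n^{-1/2}$, fixes $c_1(t)=\lambda/(2\sqrt{t})$, and forces all subsequent corrections to step by $n^{-1/2}$. I would then substitute the full ansatz, now retaining the derivative terms so that $R_n'(t)=\sum_k c_k'(t)n^{-k/2}$ enters at the appropriate subleading orders, and solve the resulting triangular hierarchy for $c_2(t),c_3(t),c_4(t)$, matching successive powers of $n^{-1/2}$ up to the stated error $\mathcal{O}(n^{-5/2})$.

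The main obstacle I anticipate is the bookkeeping around the smallness of $R_n$ rather than any conceptual difficulty. The appearance of $1/R_n\sim n^{1/2}/c_1$ in \eqref{ECb} means that individually large contributions must cancel for $\beta_n(t)$ to exhibit its expected $n^{2}$ growth, so one must expand one order beyond naive counting and verify that these cancellations are exact. Simultaneously one has to confirm that the $n^{-1}$ coefficient genuinely vanishes (so that $c_2(t)=0$) and that no logarithmic or other fractional corrections intrude before order $n^{-5/2}$. Establishing that the half-integer expansion closes cleanly, and does so with precisely the rational-in-$t$ coefficients displayed in \eqref{ECR}, is the part that demands the most care.
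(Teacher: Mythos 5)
Your proposal matches the paper's proof in all essentials: the paper likewise works from the second-order nonlinear ODE satisfied by $R_n(t)$ (equation \eqref{R2}, quoted from \cite{EC} rather than re-derived from \eqref{ECr}--\eqref{ECb}), first discards the derivative terms to isolate the dominant balance --- which there factors as $(\widetilde{R}_n-1)^2$ times a cubic whose $n$-dependent quadratic part yields the root $\widetilde{R}_n\sim\lambda/(2\sqrt{nt})$ --- and then substitutes the same half-integer ansatz $R_n(t)=\sum_{j\ge1}b_j(t)n^{-j/2}$ into the full ODE to match coefficients and obtain \eqref{ECR}. Your anticipated complication about cancellations of $1/R_n$ terms in $\beta_n(t)$ does not actually arise, since the argument runs entirely through the ODE for $R_n$ and never needs the expansion of $\beta_n$.
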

\begin{proof}
From \cite{EC} the auxiliary quantity $R_n(t)$ is given by
$$R_n(t):=\frac{\lambda}{h_n}\int_0^\infty\frac{P_n(x)^2}{x+t}x^\alpha(x+t)^\lambda{\rm e}^{-x} {\rm d}x,\quad t,\alpha,\lambda>0.$$ It satisfies
\begin{align}\label{R2}
R_n''(t)&= \frac{1}{2 t^2 \left(R_n(t)-1\right) R_n(t)}\Big\{\lambda ^2+4 t  (\alpha +\lambda +2 n+t+1)R_n(t)^3\notag\\
&-t^2 R_n'(t)^2+2 t^2 R_n(t)^5-t  (2 \alpha +2\lambda +2+4 n+5 t)R_n(t)^4
\notag\\
&- \left(2 t R_n'(t)+\alpha ^2-\lambda ^2+2 t (\alpha +\lambda +2 n+1)+t^2\right)R_n(t)^2
\notag\\
&+2  \left(t^2 R_n'(t)^2+t R_n'(t)-\lambda ^2\right)R_n(t)\Big\}.
\end{align}
Disregarding  the derivative part of this nonlinear second order differential equation, for small $\alpha$ we have
$$(\widetilde{R}_n(t)-1)^2 \left(\lambda ^2-t  (2 \lambda +4 n+t+2)\widetilde{R}_n(t)^2+2 t^2 \widetilde{R}_n(t)^3\right)=0.$$
The solutions are given by
\begin{align*}
  \widetilde{R}_{n1,2}(t)=&1,\\
    \widetilde{R}_{n3,4}(t)=&\pm\frac{\lambda }{2 \sqrt{nt}}\mp\frac{\lambda   (2 \lambda +t+2)}{16 \sqrt{t}n^{-3/2}}+\frac{\lambda ^2}{16 n^2}\pm\frac{3  \lambda  (2 \lambda +t+2)^2}{256 \sqrt{t}n^{-5/2}}+\mathcal{O}\left(n^{-3}\right),\\
  \widetilde{R}_{n5}(t)=&\frac{2 n}{t}+\frac{2 \lambda +t+2}{2 t}-\frac{\lambda ^2}{8 n^2}+\frac{\lambda ^2 (2 \lambda +t+2)}{16 n^3}+\mathcal{O}\left(n^{-7/2}\right).
\end{align*}
Assuming that $R_n(t)$ has the form
$$R_n(t)=\sum^{\infty}_{j=1}b_j(t)n^{-j/2},\quad n\rightarrow\infty,$$
and substituting  into \eqref{R2}, we obtain \eqref{ECR}.
\end{proof}

\begin{theorem}
As $n\rightarrow\infty$, the monic polynomials $P_n(x)$ orthogonal with respect to $x^\alpha(x+t)^\lambda{\rm e}^{-x}$ over $[0,\infty)$  satisfy the confluent Heun equation
\begin{align}\label{PEC2}
 \widehat{P}_n''(u)+\left(\frac{\widetilde{\gamma}}{u}+\frac{\widetilde{\delta}}{u-1}+\widetilde{\epsilon}\right)\widehat{P}_n'(u)+\frac{\widetilde{\alpha} u-\widetilde{q}}{u(u-1)}\widehat{P}_n(u)=0
\end{align}
with parameters
$$\widetilde{\gamma}=\alpha+1,\quad\widetilde{ \delta}=2\eta+\lambda,\quad \widetilde{\epsilon}=t,\quad \widetilde{\alpha}=t(\eta-n),$$$$ \widetilde{q}=-\frac{(2 \eta +\lambda ) (2 (\alpha +\eta )+\lambda )-4 \lambda  \sqrt{nt}+4 n t+2 \lambda  t}{4} .$$
Here  $\widehat{P}_n(u):=(u-1)^{-\eta} P_n(-tu).$
\end{theorem}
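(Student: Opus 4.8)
The plan is to follow the route of Sections~2.1 and 2.2: first rewrite the coefficients $Q_n(z,t)$ and $S_n(z,t)$ of \eqref{PEC1} entirely in terms of $R_n(t)$ and $R_n'(t)$ by substituting the relations \eqref{ECr}--\eqref{ECb}, and then insert the large-$n$ expansion \eqref{ECR} (together with the expansion of $R_n'(t)$ obtained by differentiating it). No small-$t$ scaling is needed here; one simply sends $n\to\infty$ with $t>0$ fixed, and the decisive feature is that $R_n(t)\sim\lambda/(2\sqrt{nt})\to 0$. The apparent singularity of \eqref{PEC1} sits at $z=-t\,[1-R_n(t)]$, so as $R_n(t)\to 0$ it drifts onto the genuine regular singular point $z=-t$ and coalesces with it. This coalescence --- rather than a singularity escaping to infinity as in Section~2.1 --- is what degenerates \eqref{PEC1} into a confluent Heun equation.

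I would first carry out the change of variable $z=-tu$, which sends the singular points $z=0,-t,\infty$ to $u=0,1,\infty$, together with the gauge $P_n(-tu)=(u-1)^{\eta}\widehat P_n(u)$. A direct computation turns the first-order coefficient into
\begin{equation*}
\frac{\alpha+1}{u}+\frac{2\eta+\lambda+1}{u-1}+t-\frac{1}{u-1+R_n(t)},
\end{equation*}
in which the residue $2\eta+\lambda+1$ at $u=1$ already contains the gauge shift $2\eta$. Letting $R_n(t)\to 0$ merges the last term into the pole at $u=1$, lowering that residue by one and leaving $\frac{\alpha+1}{u}+\frac{2\eta+\lambda}{u-1}+t$; this reads off $\widetilde\gamma=\alpha+1$, $\widetilde\delta=2\eta+\lambda$ and $\widetilde\epsilon=t$, independently of $\eta$. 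The exponent $n-\eta$ of $\widehat P_n$ at $u=\infty$ then forces $\widetilde\alpha=t(\eta-n)$ through the leading balance of $\widetilde\alpha/u$ against $\widetilde\epsilon$ in the large-$u$ asymptotics.

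The crux, and the step I expect to be the main obstacle, is the zeroth-order coefficient $t^2 S_n$. The gauge manufactures a spurious double pole $\eta(\eta-1)/(u-1)^2$ together with a term $\eta/(u-1)$ multiplying the first-order coefficient displayed above, while the confluent factor $t\,(r_n(t)+nR_n(t))/[z(z+t)(z+t(1-R_n))]$ of $S_n$ contributes, in the limit $R_n\to 0$, a further double pole at $u=1$. The exponent $\eta$ is pinned down by demanding that the total coefficient of $(u-1)^{-2}$ vanish, which yields a quadratic indicial equation for $\eta$; executing this uses the essential cancellation $r_n(t)+nR_n(t)=\tfrac12\big(tR_n'(t)+\lambda-R_n(t)(t+\alpha+\lambda-tR_n(t))\big)\to\lambda/2$, which tames residues that would otherwise diverge like $1/R_n$. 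The accessory parameter $\widetilde q$ is then the residue at $u=0$ of the surviving simple-pole coefficient, and this is where the real labour lies: that residue involves $\sum_{j=0}^{n-1}R_j(t)$ together with $r_n(t)$ and $\beta_n(t)$, all of which grow with $n$, so one must expand \eqref{ECSR} and \eqref{ECb} --- and hence \eqref{ECR} --- to sufficiently high order to recover the finite part as well as the $\sqrt{nt}$ and $nt$ terms displayed in $\widetilde q$. As in Section~2.1, one must retain one more order in the $S_n$-coefficient than in the $Q_n$-coefficient, because $P_n'$ effectively carries a factor of $n$; otherwise $\widetilde q$ would be obtained to insufficient accuracy. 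Once $\eta$ is fixed and the residues at $u=0$ and $u=1$ are matched, comparison with the standard confluent Heun equation gives \eqref{PEC2}.
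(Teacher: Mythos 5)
Your proposal is correct and follows essentially the same route as the paper: substitute \eqref{ECr}--\eqref{ECb} and the expansion \eqref{ECR} into \eqref{PEC1}, let $n\to\infty$ so that $R_n(t)\to 0$ and the apparent singularity at $z=-t\,[1-R_n(t)]$ coalesces with $z=-t$, then pass to $u=-z/t$ with the gauge factor $(u-1)^{-\eta}$. The paper merely performs these steps in the opposite order (limit first, producing the intermediate equation \eqref{PEC3} with the double pole $\lambda t/\bigl(2z(t+z)^2\bigr)$, then the change of variable and gauge), and it states $\eta=\bigl(1-\lambda\pm\sqrt{\lambda^2+1}\bigr)/2$ outright where you derive it from the vanishing of the $(u-1)^{-2}$ coefficient --- which is exactly the indicial condition $\eta(\eta-1)+\lambda\eta-\lambda/2=0$ implicit in the paper's choice.
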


\begin{proof}
Substituting \eqref{ECr}--\eqref{ECb} into \eqref{PEC1}, the coefficients of \eqref{PEC1} are given in terms of $R_n(t)$ and $R'_n(t)$. Substituting \eqref{ECR} and taking $n\rightarrow\infty$ we obtain
\begin{align}\label{PEC3}
P_n''(z)&+\Big(\frac{\alpha+1}{z}+\frac{\lambda}{z+t}-1\Big)P_n'(z) \notag\\
&+\bigg(\frac{n}{z}-\frac{\lambda  (4\sqrt{n t}-2\alpha-2t-\lambda)}{4z (t+z)}+\frac{\lambda   t}{2 z (t+z)^2}\bigg)P_n(z)=0.
\end{align}
Let $$z=-t u.$$
Then $\widehat{P}_n(u):=(u-1)^{-\eta} P_n(-tu)$,
where $$\eta=\frac{1-\lambda\pm\sqrt{\lambda ^2+1}}{2}, $$
satisfies the confluent Heun equation \eqref{PEC2}.
\end{proof}

%\begin{theorem}
%Sending $n\rightarrow\infty$, $P_n(x)$ orthogonal with respect to $x^\alpha(x+t)^\lambda{\rm e}^{-x}$ over $(0,\infty)$, satisfies the confluent Heun equation,
%\begin{align}\label{PEC2}
%\widehat{P}_n''(u)&+\Big(\frac{\alpha+1}{u}+\frac{\lambda}{u-1}+t\Big)\widehat{P}_n'(u) \notag\\
%&-\bigg(\frac{nt}{u}+\frac{\lambda( (n t)^{1/2}  -2t-2\alpha-\lambda)}{4u (u-1)}+\frac{\lambda}{2u(u-1)^2}\bigg)\widehat{P}_n(u)=0.
%\end{align}
%\end{theorem}
%\begin{proof}
%Substituting \eqref{ECr}--\eqref{ECb} into \eqref{PEC1}, then the coefficients of \eqref{PEC1} is represented by $R_n(t)$ and $R'_n(t)$. Substituting \eqref{ECR} and send $n\rightarrow\infty$ obtain,
%\begin{align}\label{PEC3}
%P_n''(z)&+\Big(\frac{\alpha+1}{z}+\frac{\lambda}{z+t}-1\Big)P_n'(z) \notag\\
%&+\bigg(\frac{n}{z}-\frac{\lambda  (4\sqrt{n t}-2\alpha-2t-\lambda)}{4z (t+z)}+\frac{\lambda   t)}{2 z (t+z)^2}\bigg)P_n(z)=0.
%\end{align}
%Let $$z=-t u.$$
%Then $\widehat{P}_n(u):=P_n(-tu)$ satisfies the confluent Heun equation \eqref{PEC2}.
%\end{proof}

\begin{corollary}\label{ECco}
When $\lambda=0$, the weight $x^\alpha(x+t)^\lambda{\rm e}^{-x}$ reduces to the classical Laguerre weight $x^{\alpha}{\rm e}^{-x}$, the polynomials $P_n(z)$ reduce to the Laguerre polynomials $L_n^\alpha(z)$ and equation \eqref{PEC3} reduces to the Laguerre equation
\begin{align}\label{PEC5}
P_n''(z)&+\Big(\frac{\alpha+1}{z}-1\Big)P_n'(z) +\frac{n}{z}P_n(z)=0.
\end{align}
\end{corollary}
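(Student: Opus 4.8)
The plan is to treat this as the exactly solvable degenerate limit $\lambda\to 0$ of the deformed weight, and to verify the claim by direct specialization of the already-established large-$n$ equation \eqref{PEC3} rather than re-running the ladder-operator machinery. First I would observe that setting $\lambda=0$ in the weight $x^\alpha(x+t)^\lambda{\rm e}^{-x}$ annihilates the deformation factor, since $(x+t)^0=1$, so the weight collapses to the classical Laguerre weight $x^\alpha{\rm e}^{-x}$, which no longer depends on $t$. Consequently the monic orthogonal polynomials $P_n(z)$ become (up to the usual normalization constant) the Laguerre polynomials $L_n^\alpha(z)$, and all $t$-dependence must disappear from the differential equation they satisfy.

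Next I would substitute $\lambda=0$ directly into \eqref{PEC3}. The key structural observation is that every $\lambda$-dependent term in \eqref{PEC3} carries an explicit factor of $\lambda$: the term $\lambda/(z+t)$ in the coefficient of $P_n'(z)$, together with the two terms $-\lambda(4\sqrt{nt}-2\alpha-2t-\lambda)/[4z(t+z)]$ and $\lambda t/[2z(t+z)^2]$ in the coefficient of $P_n(z)$. Hence all of them vanish simultaneously at $\lambda=0$, leaving precisely
\begin{equation*}
P_n''(z)+\Big(\frac{\alpha+1}{z}-1\Big)P_n'(z)+\frac{n}{z}P_n(z)=0,
\end{equation*}
which is \eqref{PEC5}. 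This specialization is internally consistent with the asymptotics already obtained, since the expansion \eqref{ECR} shows that every term of $R_n(t)$ carries a factor of $\lambda$, so $R_n(t)\to 0$ as $\lambda\to 0$; this matches the definition of the auxiliary quantity $R_n(t)$, which has an explicit prefactor $\lambda$ and thus vanishes identically when $\lambda=0$.

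Finally I would confirm that \eqref{PEC5} is genuinely the Laguerre differential equation. Multiplying through by $z$ recovers the standard form $z\,y''+(\alpha+1-z)\,y'+n\,y=0$, which is exactly the ODE satisfied by $L_n^\alpha(z)$; since the equation is linear and homogeneous, the monic polynomial $P_n$ and the classical $L_n^\alpha$ satisfy the same equation, so the normalization discrepancy is immaterial. There is no genuine obstacle here: the corollary is a clean $\lambda=0$ limit, and the only point worth flagging is the uniform vanishing of all correction terms. As an independent cross-check, paralleling the alternative ladder-operator computation in Corollary~\ref{Zc}, one could rederive \eqref{PEC5} from \eqref{A}--\eqref{B} with $v'(z)=-\alpha/z+1$, where the residue matching forces $A_n(z)$, $B_n(z)$ and $\sum_{j=0}^{n-1}A_j(z)$ into their classical Laguerre values and reproduces the same equation.
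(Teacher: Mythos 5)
Your proposal is correct, but your primary route differs from the paper's. You prove the reduction by direct specialization: set $\lambda=0$ in \eqref{PEC3}, observe that every $\lambda$-dependent term (the $\lambda/(z+t)$ term in the coefficient of $P_n'$ and the two $\lambda$-proportional terms in the coefficient of $P_n$) vanishes identically, and recover \eqref{PEC5}; you then check consistency via $R_n(t)\equiv 0$ at $\lambda=0$ and identify \eqref{PEC5} with the classical Laguerre ODE. The paper instead rederives \eqref{PEC5} from scratch by the ladder-operator method: for $v'(x)=-\alpha/x+1$ it computes $A_n(z)=1/z$ and $B_n(z)=-n/z$ exactly, and substitutes into \eqref{H} to get $-\bigl(v'(z)+A_n'(z)/A_n(z)\bigr)=\tfrac{\alpha+1}{z}-1$ and $B_n'(z)-B_n(z)A_n'(z)/A_n(z)+\sum_{j=0}^{n-1}A_j(z)=\tfrac{n}{z}$. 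Your substitution is the more elementary argument and is the one that literally establishes the corollary's claim that \eqref{PEC3} reduces to \eqref{PEC5}; what the paper's computation buys is an \emph{exact}, non-asymptotic statement — the classical Laguerre polynomials satisfy \eqref{PEC5} for every finite $n$ — which serves as an independent consistency check that the large-$n$ equation \eqref{PEC3} degenerates to the right limit (the same dual strategy the paper makes explicit in Corollary~\ref{Zc}). Your closing remark sketches exactly this ladder-operator cross-check, so your proposal in effect subsumes the paper's proof, though only in outline rather than with the explicit evaluation of $A_n$ and $B_n$.
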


\begin{proof} Let us use first ladder operators approach to derive  \eqref{PEC5}.
For the weight function $x^\alpha {\rm e}^{-x}$ we have $v(x)=-\alpha \ln x+x$ and $v'(x)=-\frac{\alpha}{x}+1$.
From \eqref{A}--\eqref{B} we have
\begin{align*}
  A_n(z)&=\frac{1}{h_n}\int_0^\infty\;\frac{\alpha}{yz}P^2_n(y)y^\alpha{\rm e}^{-y}\;{\rm d}y=\frac{1}{z},\\
   B_n(z)&=\frac{1}{h_n}\int_0^\infty\;\frac{\alpha}{yz}P_n(y)P_{n-1}(y)y^\alpha{\rm e}^{-y}\;{\rm d}y=-\frac{n}{z}.
\end{align*}
Recalling \eqref{H}, we obtain
\begin{align*}
 -( v'(z)+\frac{A'_n(z)}{A_n(z)}) &=\frac{\alpha+1}{z}-1,  \\
  B'_n(z)-B_n(z)\frac{A'_n(z)}{A_n(z)}+\sum_{j=0}^{n-1}A_j(z)&=\frac{n}{z},
\end{align*}
which gives \eqref{PEC5}.
\end{proof}

From the expressions for $r_n(t),\;\beta_n(t),\;\sum_{j=0}^{n-1}R_j(t)$ in terms of $R_n(t)$, it is easy to obtain their asymptotic expressions.

\begin{rem}
The auxiliary quantities $r_n(t)$, $\beta _n(t)$ and $\sum_{j=0}^{n-1}R_j(t)$ have  the following asymptotic expressions when $n$ is large:

\begin{align}
 r_n(t)&=\frac{\lambda }{2}-\frac{\lambda n^{1/2}}{2 t^{1/2}}-\frac{ \lambda \left[4 t (3t+2\alpha +2\lambda )-4 \alpha ^2+1\right]}{64 t^{3/2}n^{1/2}}\notag\\
 &~~~~~~~~~~~~~~~~~~~~~~~~~~~~~~~~~~+\frac{\lambda ^2 \left(4 \alpha ^2+4 t^2-1\right)}{64 n t^2}+\mathcal{O}\left(\frac{1}{n^{3/2}}\right)\label{ECr1},\\
 \beta _n(t)&=n^2+n (\alpha +\lambda )-\frac{\lambda  (t n)^{1/2}}{2} +\frac{1}{4} \lambda  (2 \alpha +\lambda )\notag\\
 &+\frac{\lambda   \left(4 t (t-2\alpha -2\lambda )-12 \alpha ^2+3\right)}{64 \sqrt{n t}}+\frac{\left(1-4 \alpha ^2\right) \lambda ^2}{32 n t}+\mathcal{O}\left(\frac{1}{n^{3/2}}\right)\label{ECb1}
,\\
  \sum_{j=0}^{n-1}R_j(t)&=\frac{\lambda  n^{1/2}}{t^{1/2}}-\frac{\lambda  (\lambda +2 (\alpha +t))}{4 t}+\frac{\lambda  (4 t (4 \alpha +2 \lambda +3 t)+1)}{64 t^{3/2}n^{1/2}}\notag\\
  &~~~~~~~~~~~~~~~~~~~~~~~~~~~~~~~~~~-\frac{\lambda ^2 \left(1-4 \alpha ^2+4 t^2\right)}{64 n t^2}+\mathcal{O}\left(\frac{1}{n^{3/2}}\right).\label{ECSR1}
\end{align}
\end{rem}

\begin{rem}
From the relation $\alpha_n(t)=2n+1+\alpha+\lambda-tR_n(t)$ (see\cite{EC}) and the asymptotic expression \eqref{ECR} we obtain
\begin{equation}\label{ECa1}
 \alpha_n(t)=2n+1+\alpha+\lambda-\frac{\lambda \sqrt{t} }{2 \sqrt{ n}}+\frac{\lambda  \left[4 \left(\alpha ^2+t^2+2 t (\alpha +\lambda +1)\right)-1\right]}{64\sqrt{t} n^{3/2}}+\mathrm{O}\left(\frac{1}{n^2}\right).
\end{equation}
\end{rem}

Next  we will use Dyson's Coulomb fluid method to   check the correctness of this result.

\begin{proposition}
From Dyson's Coulomb fluid approximation theory  we obtain
\begin{align}
  &\alpha_n(t)  \sim2n+\alpha+\lambda-\frac{\lambda  \sqrt{t}}{2 \sqrt n}+\frac{\lambda   \left((\alpha +t)^2+2 \lambda  t\right)}{16 \sqrt{t}n^{3/2}} \notag\\
  &~~~~~~~~~~~~~~~~~~~~~~~~~~~~~~~~~~~~~~~+\frac{\lambda ^2 \left(\alpha ^2-t^2\right)}{16n^2 t}+\mathcal{O}\left(\frac{1}{n^{5/2}}\right) , \quad n\rightarrow\infty,\label{ECa2}\\
 & \beta_n(t)  \sim n^2+n (\alpha +\lambda )-\frac{ \lambda(nt)^{1/2}}{2}  +\frac{\lambda  (2 \alpha +\lambda )}{4} +\mathcal{O}\left(\frac{1}{n^{1/2}}\right), \quad n\rightarrow\infty.\label{ECb2}
\end{align}
\end{proposition}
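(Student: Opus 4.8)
The plan is to follow the Coulomb-fluid template already used in Sections 2.1 and 2.2, now with the fixed parameter $t$ (no extra scaling is needed here). First I would write the external potential for the present weight, $v(x)=-\alpha\ln x-\lambda\ln(x+t)+x$, so that $v'(x)=-\alpha/x-\lambda/(x+t)+1$, and substitute it into the two stability conditions \eqref{supp2}. Using the integral formulas of Appendix 1 — in particular $\int_a^b \frac{dx}{(x-c)\sqrt{(b-x)(x-a)}}=\pi/\sqrt{(a-c)(b-c)}$ for $c<a$, applied at $c=0$ and $c=-t$, together with $\int_a^b \frac{dx}{\sqrt{(b-x)(x-a)}}=\pi$ and $\int_a^b \frac{x\,dx}{\sqrt{(b-x)(x-a)}}=\pi(a+b)/2$ — the two conditions collapse to the algebraic pair
\begin{align*}
1-\frac{\alpha}{\sqrt{ab}}-\frac{\lambda}{\sqrt{(a+t)(b+t)}}&=0,\\
\frac{a+b}{2}-\alpha-\lambda+\frac{\lambda t}{\sqrt{(a+t)(b+t)}}&=2n,
\end{align*}
where the decomposition $x/(x+t)=1-t/(x+t)$ has been used to evaluate the $\lambda$-term in the second (first-moment) condition.

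Next I would reduce this pair to a single equation, exactly as the auxiliary variables $X(t)$ and $Y(t)$ were used before. Setting $Y:=1/\sqrt{(a+t)(b+t)}$, the second equation gives $(a+b)/2=2n+\alpha+\lambda-\lambda tY$, and the first gives $\sqrt{ab}=\alpha/(1-\lambda Y)$. Combined with the identity $(a+t)(b+t)=ab+t(a+b)+t^2$, this yields one algebraic equation for $Y$, namely $Y^{-2}=\alpha^2/(1-\lambda Y)^2+2t\bigl(2n+\alpha+\lambda-\lambda tY\bigr)+t^2$, with $n$ entering as a large parameter. The leading balance is $Y^{-2}\sim 4nt$, so $Y\sim 1/(2\sqrt{nt})$; I would then posit $Y=\sum_{j\ge1}c_j n^{-j/2}$ and determine the coefficients order by order, choosing the root compatible with $Y\to0^+$.

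Finally, I would insert the expansion of $Y$ back into $(a+b)/2$ and into $\sqrt{ab}=\alpha/(1-\lambda Y)$, and invoke \eqref{asym1}--\eqref{asym2}, namely $\alpha_n(t)\sim(a+b)/2$ and $\beta_n(t)\sim\bigl((b-a)/4\bigr)^2=\bigl[((a+b)/2)^2-ab\bigr]/4$. Substituting the expansions and collecting powers of $n$ produces \eqref{ECa2} and \eqref{ECb2}; for instance $\sqrt{ab}\to\alpha$ and $(a+b)/2\to 2n+\alpha+\lambda-\tfrac{\lambda\sqrt t}{2\sqrt n}+\cdots$ already reproduce the leading terms.

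The part I expect to be most delicate is the asymptotic bookkeeping: because $a+b$ grows like $4n$ while $ab$ stays bounded near $\alpha^2$, the natural expansion parameter is $n^{-1/2}$ and the corrections to $Y$ (hence to $(a+b)/2$ and $\sqrt{ab}$) appear in half-integer powers of $n$. One must therefore carry the single $Y$-equation to sufficiently high order — through $n^{-2}$ for $\alpha_n$ and up to the constant ($n^{0}$) term for $\beta_n$ — and consistently track how the $\sqrt{nt}$ scaling of $\sqrt{(a+t)(b+t)}$ couples to the $O(1)$ quantity $\sqrt{ab}$. Selecting the correct branch of the algebraic equation, as was necessary for the quintic solved in Section 2.1, is the other point requiring care; the condition $Y\to0^+$ (equivalently $b\to\infty$) fixes it.
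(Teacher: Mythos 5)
Your proposal is correct, and I verified that your reduced equation
$Y^{-2}=\alpha^2/(1-\lambda Y)^2+2t\left(2n+\alpha+\lambda-\lambda tY\right)+t^2$
does reproduce the stated expansions: with $Y=\sum_{j\ge 1}c_j n^{-j/2}$ one finds $c_1=1/(2\sqrt t)$, $c_2=0$, $c_3=-\left[(\alpha+t)^2+2\lambda t\right]/(16t^{3/2})$, $c_4=-\lambda(\alpha^2-t^2)/(16t^2)$, and back-substitution into $(a+b)/2=2n+\alpha+\lambda-\lambda tY$ and $ab=\alpha^2/(1-\lambda Y)^2$ gives exactly \eqref{ECa2} and \eqref{ECb2}. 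The route differs from the paper's in the choice of auxiliary unknown, and the difference is worth noting. The paper sets $Y(t):=(a+b)/2$, eliminates $1/\sqrt{ab}$ through the linear combination of \eqref{ECag1} and \eqref{Ecag2} (its identity \eqref{ECsq}), and then must clear the remaining square root $\sqrt{(a+t)(b+t)}$ by squaring, which produces the higher-degree polynomial equation \eqref{ECY}; because squaring introduces spurious roots, the paper identifies the relevant one by first inspecting a small-$\alpha$ degeneration before positing the ansatz $Y=\sum_j a_j(t)\,n^{1-j/2}$. Your choice $Y:=1/\sqrt{(a+t)(b+t)}$, combined with the product identity $(a+t)(b+t)=ab+t(a+b)+t^2$, yields a single rational equation (a quintic after clearing denominators, closely parallel to the quintics \eqref{ZX} and \eqref{CDY} of Sections 2.1 and 2.2) with no squaring at all, and the branch is fixed transparently by the leading balance $Y^{-2}\sim 4nt$ together with $Y\to 0^+$. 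What the paper's variable buys is immediacy: $\alpha_n(t)\sim Y(t)$ is read off with no back-substitution. What yours buys is cleaner algebra and an unambiguous root selection. Both rest on identical ingredients — the stability conditions \eqref{supp2}, the integral identities \eqref{int3} and \eqref{int5} (your single formula with $c=0$ and $c=-t$ covers both), the half-integer-power ansatz in $n$, and the Coulomb fluid asymptotics \eqref{asym1}--\eqref{asym2} — and your closing remarks about the delicate points (bookkeeping in powers of $n^{-1/2}$, branch selection) are precisely the points where the paper itself has to exercise care.
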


\begin{proof}
For weight function $w(x)=x^\alpha(x+t)^\lambda{\rm e}^{-x}$  we have
$$v(x)=-\alpha \ln x-\lambda \ln(x+t)+x\quad \text{and} \quad v'(x)=-\frac{\alpha}{x}-\frac{\lambda}{x+t}+1.$$
Substituting $v'(x)$ into \eqref{supp2}, combining with formulas \eqref{int1}--\eqref{int3} and \eqref{int5},  we obtain following algebraic equations:
\begin{align}\label{}
  \frac{\alpha}{\sqrt{ab}}+\frac{\lambda}{\sqrt{(a+t)(b+t)}}-1=0,\label{ECag1}\\
  2n+\alpha+\lambda-\frac{\lambda t}{\sqrt{(a+t)(b+t)}}-\frac{a+b}{2}=0\label{Ecag2}.
\end{align}

Let $Y(t):=(a+b)/2$ and $Y(0)=2n+\alpha+\lambda$. Solving for $1/\sqrt{ab}$ from the sum of \eqref{Ecag2} and \eqref{ECag1} multiplied by $t$, we get
\begin{equation}\label{ECsq}
  \frac{1}{\sqrt{ab}}=\frac{Y(t)-(2n+\alpha+\lambda-t)}{t\alpha}.
\end{equation}
Substituting $1/\sqrt{ab}$ into \eqref{ECag1} and using the expression for  $Y(t)$, we  obtain
\begin{equation*}
  \frac{Y(t)-(2n+\alpha+\lambda)}{t}+\lambda/\sqrt{t^2+2tY(t)+\left(\frac{t\alpha }{Y(t)-(2n+\alpha+\lambda-t)}\right)^2}=0.
\end{equation*}
Eliminating the square root we get
\begin{align}
  (Y(t)-\alpha -\lambda -2 n)^2 &\left[\left(t^2+2 t Y(t)\right) (Y(t)-\alpha -\lambda -2 n+t)^2+\alpha ^2 t^2\right]\notag\\
  &=\lambda ^2 t^2 (Y(t)-\alpha -\lambda -2 n+t)^2.\label{ECY}
\end{align}
For small $\alpha$ and $n\rightarrow\infty $ we consider equation
\begin{equation*}
  t (-\lambda -2 n+t+\widetilde{Y}(t))^2 \left((t+2\widetilde{ Y}(t)) (\lambda +2 n-\widetilde{Y}(t))^2-\lambda ^2 t\right)=0,
\end{equation*}
which is solved by
\begin{align*}
  \widetilde{Y}_{1,2}(t) &=2n+\lambda-t, \\
  \widetilde{Y}_{3,4}(t) &=2 n+\lambda+\frac{\lambda}{2}  \left( \frac{t}{n}\right)^{1/2}  -\frac{\sqrt{t} (2 \lambda +t)\lambda }{16}  \left(\frac{1}{n}\right)^{3/2} -\frac{\lambda ^2 t}{16 n^2}+\mathcal{O}\left(\frac{1}{n^{5/2}}\right),\\
 \widetilde{Y}_{5}(t) &= -\frac{t}{2}+\frac{\lambda ^2 t}{8 n^2}+\mathcal{O}\left(\frac{1}{n^{5/2}}\right).
\end{align*}
Assuming that $Y(t)$ has the form
\begin{equation*}
  Y(t)=\sum _{j=0}^{\infty} a_j(t) n^{1-j/2}, \quad n\rightarrow\infty,
\end{equation*}
and substituting  the expression above into \eqref{ECY}, we obtain $Y(t)$, which is the same as $\alpha_n(t)$ in \eqref{ECa2}. Then combining with \eqref{ECsq} we obtain
\begin{equation*}
\beta_n(t)\sim \frac{\left((a+b)/2\right)^2-ab}{4}=\frac{1}{4} \left(Y(t)^2-\left(\frac{\alpha  t}{Y(t)-(\alpha +\lambda +2 n-t)}\right)^2\right),
\end{equation*}
which gives \eqref{ECb2}.

 Comparing \eqref{ECa1}, \eqref{ECb1} with \eqref{ECa2}, \eqref{ECb2} respectively we see that the ladder operator approach  and the Coulomb fluid approximation  method yield the same asymptotic expressions for $\alpha_n(t)$, $\beta_n(t)$ when $n$ is large.

\end{proof}

Next we consider equation \eqref{PEC1} with coefficients depending on $R_n(t)$ and its deivative, without any reference to orthogonal polynomials.

\begin{proposition}
If $R_n(t)$ satisfies the Riccati equation
$$tR_n'(t)=t R_n(t)^2 -(\alpha +\lambda +t) R_n(t)+\lambda $$
with the solution
\begin{equation}\label{BCd}
R_n(t)= \frac{\alpha  C_2 U(\alpha +1,\alpha +\lambda +1,t)+L_{-\alpha -1}^{\alpha +\lambda }(t)}{C_2 U(\alpha ,\alpha +\lambda ,t)+L_{-\alpha }^{\alpha +\lambda -1}(t)}+1,
\end{equation}
then $\widetilde{P}_n(u):=P_n(-t u)$ satisfy equation for the derivative  of the confluent Heun function \eqref{dC}
%\begin{align}
 %\widetilde{P}_n''(u)&+ \left(\frac{\widetilde{\gamma} +1}{u}+\frac{\widetilde{\delta} +1}{u-1}+\widetilde{\epsilon} -\frac{\widetilde{\alpha} }{\widetilde{\alpha}  u-\widetilde{q}}\right)\widetilde{P}_n'(u)\notag\\
% &+\frac{ (\widetilde{\alpha} +\widetilde{\epsilon} ) \left(\widetilde{\alpha}  u^2-2 \widetilde{q} u\right)+\widetilde{\alpha}  \widetilde{\gamma} +\widetilde{q}^2-\widetilde{q }(\widetilde{\gamma} +\widetilde{\delta} -\widetilde{\epsilon} )}{u (u-1) (\widetilde{\alpha } u-\widetilde{q})}\widetilde{P}_n(u)=0,
%\end{align}
 with parameters
$$\widetilde{\gamma} = \alpha ,\;\widetilde{\delta} = \lambda ,\; \widetilde{\epsilon} = t,\; \widetilde{q}= -(n+1)t (1-R_n(t)),\; \widetilde{\alpha}=-(n+1) t.$$
\begin{proof}
Let $z=-tu$, then $\widetilde{P}_n(u):=P_n(-tu)$, which gives
%$$\widetilde{P}_n'(u)=-\frac{P_n'(z)}{t},\quad,\text{and} \quad \widetilde{P}_n''(u)=\frac{P_n''(z)}{t^2}.$$
%Substituting above expressions into \eqref{PEC1} find,
\begin{align*}
\widetilde{P}_n''(u)&+\Big(\frac{\alpha+1}{u}+\frac{\lambda+1}{u-1}+t-\frac{1}{u-[1-R_n(t)]}\Big)\widetilde{P}_n'(u) \\ &-\left(\frac{r_n(t)+R_n(t)}{u(u-1)[u-(1-R_n(t))]} +\frac{nt(u-1)+\sum_{j=0}^{n-1}tR_j(t)}{u(u-1)}\right)\widetilde{P}_n(u)=0.
\end{align*}
This equation is the equation for the   derivative of the confluent Heun function  when
$$tR_n'(t)= tR_n(t)^2-(\alpha +\lambda +t) R_n(t)+\lambda .$$
Solving this differential equation we obtain \eqref{BCd} and in the case when the constant $C_2=0$ we obtain
$$R_n(t)=\frac{\lambda M(\alpha;\alpha+\lambda+1;t)}{(\alpha+\lambda)M(\alpha;\alpha+\lambda;t)}.$$

Note that  if  $R_n(t)$  satisfies both the Riccati equation and  \eqref{R2}, then $n=-1$.
\end{proof}
\end{proposition}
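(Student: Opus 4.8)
The plan is to begin from \eqref{PEC1} and carry out the substitution $z=-tu$, which sends the four singularities $z=0,\,-t,\,-t(1-R_n),\,\infty$ to $u=0,\,1,\,1-R_n,\,\infty$. Since $d/dz=-t^{-1}d/du$, a direct chain-rule computation rewrites \eqref{PEC1} as a second order equation for $\widetilde P_n(u)=P_n(-tu)$ whose first-derivative coefficient is
\[\frac{\alpha+1}{u}+\frac{\lambda+1}{u-1}+t-\frac{1}{u-(1-R_n(t))}.\]
Matching this against the drift term $\tfrac{\widetilde\gamma+1}{u}+\tfrac{\widetilde\delta+1}{u-1}+\widetilde\epsilon-\tfrac{\widetilde\alpha}{\widetilde\alpha u-\widetilde q}$ of \eqref{dC} immediately gives $\widetilde\gamma=\alpha$, $\widetilde\delta=\lambda$, $\widetilde\epsilon=t$ and locates the fifth singularity at $\widetilde q/\widetilde\alpha=1-R_n(t)$. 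This fixes the ratio $\widetilde q/\widetilde\alpha$ but leaves $\widetilde\alpha$ and $\widetilde q$ individually undetermined until the potential term is analysed.

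Next I would impose the Riccati equation to evaluate the auxiliary quantities. Substituting $tR_n'=tR_n^2-(\alpha+\lambda+t)R_n+\lambda$ into \eqref{ECr} collapses $r_n$ to the algebraic expression $r_n=tR_n^2-(\alpha+\lambda+t+n)R_n+\lambda$; inserting this into \eqref{ECb} and cancelling the common factor $1-R_n$ yields $\beta_n=n(\alpha+\lambda+n)-tr_n-ntR_n$, which through \eqref{ECSR} is equivalent to the clean identity $\sum_{j=0}^{n-1}R_j=nR_n$. With these two reductions the potential term of the transformed equation becomes a single rational function over $u(u-1)\bigl(u-(1-R_n)\bigr)$ whose numerator is a quadratic in $u$, and the task reduces to checking that this quadratic coincides with $g(u)$ from \eqref{dC} once one sets $\widetilde\alpha=-(n+1)t$ and $\widetilde q=-(n+1)t(1-R_n)$. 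The coefficient of $u^2$ reproduces $(\widetilde\alpha+\widetilde\epsilon)\widetilde\alpha=n(n+1)t^2$ automatically, and the coefficient of $u$ is precisely the identity $\sum_{j=0}^{n-1}R_j=nR_n$. I expect the genuinely delicate comparison to be the constant term, since this is the only place where the actual value of $\widetilde q$, and not merely the pole location $\widetilde q/\widetilde\alpha$, must agree; it is here that the algebraic forms of $r_n$ and $\beta_n$ forced by the Riccati equation have to conspire, and verifying this identity is the main obstacle.

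Finally I would integrate the Riccati equation itself. The substitution $R_n=1-w'/w$ linearises it into Kummer's equation
\[t\,w''+(\alpha+\lambda-t)\,w'-\alpha\,w=0,\]
a fundamental system for which is $\{U(\alpha,\alpha+\lambda,t),\,L_{-\alpha}^{\alpha+\lambda-1}(t)\}$. Differentiating via $\tfrac{d}{dt}U(\alpha,\alpha+\lambda,t)=-\alpha\,U(\alpha+1,\alpha+\lambda+1,t)$ and $\tfrac{d}{dt}L_{-\alpha}^{\alpha+\lambda-1}(t)=-L_{-\alpha-1}^{\alpha+\lambda}(t)$ and forming $-w'/w$ reproduces the closed form \eqref{BCd}; the special case $C_2=0$ then follows from the contiguous relation $\lambda\,M(\alpha,\alpha+\lambda+1,t)=(\alpha+\lambda)M(\alpha,\alpha+\lambda,t)-\alpha\,M(\alpha+1,\alpha+\lambda+1,t)$. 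The concluding assertion that a function solving both the Riccati equation and \eqref{R2} must have $n=-1$ would be obtained by differentiating the Riccati relation to express $R_n''$ algebraically, substituting $R_n''$ and $R_n'$ into \eqref{R2}, and reading off from the resulting polynomial identity in $R_n$ that the $n$-dependent terms vanish only when $n=-1$ (consistently with $\widetilde\alpha=-(n+1)t$ degenerating to $0$ there).
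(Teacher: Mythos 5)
Your proposal is correct and follows essentially the same route as the paper's (far terser) proof: the substitution $z=-tu$, matching the transformed equation against \eqref{dC} with $\widetilde{\gamma}=\alpha$, $\widetilde{\delta}=\lambda$, $\widetilde{\epsilon}=t$ and $\widetilde{q}/\widetilde{\alpha}=1-R_n$, collapsing $r_n$, $\beta_n$ and $\sum_{j=0}^{n-1}R_j$ via the Riccati equation, and linearizing the Riccati equation through $R_n=1-w'/w$ into Kummer's equation. The constant-term comparison you single out as the remaining obstacle does go through routinely: with $r_n=tR_n^2-(\alpha+\lambda+t+n)R_n+\lambda$ and $\sum_{j=0}^{n-1}R_j=nR_n$, both sides reduce to $(n+1)tR_n^2-(\alpha+\lambda+t+2nt)R_n+\lambda+nt$, their difference being exactly $r_n-\bigl(tR_n^2-(\alpha+\lambda+t+n)R_n+\lambda\bigr)=0$, so your outline is complete.
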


\subsection{$x^\alpha {\rm e}^{-x-t/x}, \;x\in(0,\infty),\,\alpha,\,t>0$}

The weight  $w(x,\alpha,t)=x^\alpha {\rm e}^{-x-t/x}$ was studied by Chen and Its  \cite{ChenIts2010} for finite $n$, Chen and Chen et al \cite{ChenChenJMP2015}  for  $n\to\infty$.

The second order linear differential equation satisfied by $P_n(z)$ (see Chen and Its \cite{ChenIts2010}) is given by
\begin{align}\label{CI1}
P_n''(z)+Q_n(z,t)P_n'(z) +S_n(z,t)P_n(z)=0,
\end{align}
where
\begin{align*}
Q_n(z,t)=\frac{\alpha+2}{z}+\frac{t}{z^2}-\frac{1}{z+R_n(t)}-1,\\
S_n(z,t)=\sum _{j=0}^{n-1} \frac{R_j(t)}{z^2}+\frac{n \left((z-1) R_n(t)+z^2\right)-r_n(t)}{z^2 \left(R_n(t)+z\right)}.
\end{align*}
Auxiliary quantities $R_n(t)$, $r_n(t)$ satisfy
\begin{align}
  r_n(t)&=\frac{t+tR_n'(t)-(2n+1+\alpha+R_n(t))R_n(t)}{2}, \label{CIr}\\
  \sum _{j=0}^{n-1} R_j(t)&=-n(n+\alpha)-r_n(t)+\beta_n(t),\label{CISR}
\end{align}
where
\begin{equation}\label{CIb}
  \beta_n(t)=\frac{1}{R_n(t)}\left[nt-(2n+\alpha)r_n(t)-\frac{r_n^2(t)-tr_n(t)}{R_n(t)}\right].
\end{equation}

To obtain the asymptotic expression of $R_n(t)$, the method of double scaling will be used. Let  $n\rightarrow\infty,\;t\rightarrow0^+ $  and let   $s=(2n+\alpha+1)t$ be  fixed. It should be pointed out that the asymptotic expression of $\mathbb{R}_n(s)$ was given in \cite{ChenChenJMP2015}, see the following proposition. For convenience of the reader we use the hollow symbol to define a new function of   $s$, that is  $\mathbb{R}_n(s)=R_n(s/(2n+\alpha+1))$.

\begin{proposition}\label{CIp}\cite{ChenChenJMP2015}
Let  $n\rightarrow\infty$, $t\rightarrow0^+$ and $s=(2n+\alpha+1)t$ be fixed.

Case I: for large $s$ we have
\begin{align}
\mathbb{R}_n(s)&= \frac{1 }{2n+\alpha+1}\left(s^{\frac{2}{3}}-\frac{\alpha}{3}s^{\frac{1}{3}}+\frac{\alpha(\alpha^2-1)}{81}s^{-\frac{1}{3}}+\frac{\alpha^2(\alpha^2-1)}{243}s^{-\frac{2}{3}}
\right)+\mathcal{O}\left(s^{-2}\right).\label{CIRs1}
\end{align}

Case II: for small $s$ we have
\begin{align}
\mathbb{R}_n(s)&= \frac{1 }{2n+\alpha+1}\left(\frac{s}{\alpha }-\frac{s^2}{\alpha ^2 \left(\alpha ^2-1\right)}+\frac{3 s^3}{\alpha ^3 \left(\alpha ^2-1\right) \left(\alpha ^2-4\right)}\right)+\mathcal{O}\left(s^4\right)\label{CIRs2},
\end{align}
where $\alpha\neq\mathbb{Z}$.
\begin{proof}

Define
$$\widetilde{R}(s)=\lim_{n\rightarrow\infty}\frac{R_n(s/(2n+\alpha+1))}{s/(2n+\alpha+1)}.$$
Then $\widetilde{R}(s)$ satisfies
$$\widetilde{R}''(s)=\frac{\widetilde{R}'(s){}^2}{\widetilde{R}(s)}-\frac{\widetilde{R}'(s)}{s}+\frac{\widetilde{R}^2(s)}{s}+\frac{\alpha}{s^2}-\frac{1}{s^2\widetilde{R}(s)}$$
with the initial conditions $\widetilde{R}(0)=1/\alpha$, $\widetilde{R}'(0)=-1/(\alpha^2(\alpha^2-1))$.

For large $s$ we have
 $$\widetilde{R}(s)= s^{-\frac{1}{3}}-\frac{\alpha}{3}s^{-\frac{2}{3}}+\frac{\alpha(\alpha^2-1)}{81}s^{-\frac{4}{3}}+\frac{\alpha^2(\alpha^2-1)}{243}s^{-\frac{5}{3}}
+\mathcal{O}\left(s^{-2}\right).$$

For small $s$ we have
\begin{align*}
\widetilde{R}(s)=& \frac{1}{\alpha }-\frac{s}{\alpha ^2 \left(\alpha ^2-1\right)}+\frac{3 s^2}{\alpha ^3 \left(\alpha ^2-1\right) \left(\alpha ^2-4\right)}\\
&-\frac{6(2\alpha^2-3)s^3}{\alpha^4(\alpha^2-1)^2(\alpha^2-4)(\alpha^2-9)}+\mathcal{O}\left(s^4\right),
\end{align*}
see Chen and Chen \cite{ChenChenJMP2015} for details.

From
$$\widetilde{R}(s)=\lim_{n\rightarrow\infty}\frac{\mathbb{R}_n(s)}{s/(2n+\alpha+1)}$$
we deduce
$$\mathbb{R}_n(s)=\frac{s}{2n+\alpha+1}\widetilde{R}(s),\quad n\rightarrow \infty.$$
This gives \eqref{CIRs1} and \eqref{CIRs2}.
\end{proof}
\end{proposition}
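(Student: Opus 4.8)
The plan is to reduce the whole problem to a single second-order nonlinear ODE for $R_n(t)$, pass to the double-scaling limit to obtain a Painlev\'e III$'$ equation for a rescaled function $\widetilde{R}(s)$, and then read off both expansions from the large- and small-$s$ behaviour of that limiting equation. First I would eliminate $r_n(t)$ and $\beta_n(t)$ from the coupled relations \eqref{CIr}--\eqref{CIb}: \eqref{CIr} expresses $r_n$ through $R_n$ and $R_n'$, while \eqref{CISR}--\eqref{CIb} combined with the supplementary conditions $(S_1)$ and $(S_2')$ (and the Toda-type $t$-evolution) leave a single second-order ODE for $R_n(t)$ in which $n$ enters only as a parameter. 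This is the Chen--Its reduction to Painlev\'e III \cite{ChenIts2010}, and I would record the resulting ODE explicitly, mirroring the role of \eqref{R2} in the previous subsection.

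Second, I would introduce the double scaling $t=s/(2n+\alpha+1)$ and postulate $R_n(t)\approx t\,\widetilde{R}(s)$, i.e.
\[
\widetilde{R}(s)=\lim_{n\rightarrow\infty}\frac{R_n\!\left(s/(2n+\alpha+1)\right)}{s/(2n+\alpha+1)}.
\]
Substituting $R_n(t)=t\,\widetilde{R}(s)$ together with $t=s/(2n+\alpha+1)$ into the nonlinear ODE and collecting the leading power of $n$ yields, after the expected cancellations, the limiting equation
\[
\widetilde{R}''(s)=\frac{\widetilde{R}'(s)^2}{\widetilde{R}(s)}-\frac{\widetilde{R}'(s)}{s}+\frac{\widetilde{R}^2(s)}{s}+\frac{\alpha}{s^2}-\frac{1}{s^2\widetilde{R}(s)},
\]
which is a form of $P_{III'}$.

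Third, I would fix the admissible solution by its behaviour at the endpoints. Near $s=0$ the term $\alpha/s^2-1/(s^2\widetilde{R})$ is singular unless $\widetilde{R}(0)=1/\alpha$; imposing this regularity (consistent with the classical Laguerre limit recovered at $t=0$) and inserting the ansatz $\widetilde{R}(s)=\sum_{k\ge0}c_k s^k$ forces $c_1=\widetilde{R}'(0)=-1/(\alpha^2(\alpha^2-1))$ and, recursively, all coefficients of Case~II. For Case~I I would insert $\widetilde{R}(s)=s^{-1/3}\sum_{k\ge0}d_k\,s^{-k/3}$; the dominant balance $\widetilde{R}^2/s\sim 1/(s^2\widetilde{R})$ gives $\widetilde{R}^3\sim 1/s$, hence the leading $s^{-1/3}$, and matching successive powers of $s^{-1/3}$ produces $d_0=1$, $d_1=-\alpha/3$, and the remaining coefficients. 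Finally the identity $\mathbb{R}_n(s)=\frac{s}{2n+\alpha+1}\widetilde{R}(s)$ converts both expansions of $\widetilde{R}$ directly into \eqref{CIRs1} and \eqref{CIRs2}.

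The hard part will be the second step: justifying rigorously that the rescaled sequence converges and that its limit genuinely solves the displayed $P_{III'}$ equation, rather than merely verifying the dominant balance formally. One must check that the terms discarded in the nonlinear ODE are uniformly of lower order in $n$ on the relevant range of $s$, and that the correct (real, positive, decaying as $s\to\infty$) branch of $\widetilde{R}^3=1/s+\cdots$ is the one actually realized by $R_n(t)$. For this convergence I would rely on the analysis of \cite{ChenChenJMP2015}, treating the formal expansions above as the computation that pins down the coefficients once existence of the scaling limit has been granted.
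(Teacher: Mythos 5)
Your proposal follows essentially the same route as the paper: pass to the double-scaling limit $t=s/(2n+\alpha+1)$, identify the limiting $P_{III'}$ equation for $\widetilde{R}(s)$ with the initial data $\widetilde{R}(0)=1/\alpha$, $\widetilde{R}'(0)=-1/(\alpha^2(\alpha^2-1))$, expand at $s\to0^+$ and $s\to\infty$, and convert back via $\mathbb{R}_n(s)=\frac{s}{2n+\alpha+1}\widetilde{R}(s)$. The only difference is that you spell out details (the dominant balance $\widetilde{R}^3\sim1/s$, the regularity argument fixing the initial conditions) that the paper simply quotes from \cite{ChenChenJMP2015}, while deferring the rigorous convergence of the scaling limit to that same reference, exactly as the paper does.
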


The coefficients of \eqref{CI1}  are given in terms of $R_n(t)$. Next we show that \eqref{CI1} reduces to  the double confluent Heun equations, both for small and for large $s$.

\begin{theorem}
Let $n\rightarrow\infty$, $t\rightarrow0^+$ and $s=(2n+\alpha+1)t$ be fixed. Equation  \eqref{CI1} reduces to  the   double confluent Heun equation
\begin{align}\label{CI3}
P_n''(z)+\Big(\frac{\widetilde{\gamma}}{z^2}+\frac{\widetilde{\delta}}{z}+\widetilde{\epsilon}\Big)P_n'(z) +\left(\frac{\widetilde{\alpha} z-\widetilde{q}}{z^2}\right)P_n(z)=0
\end{align}
with the following parameters.

Case I: for large $s$
\begin{align*}
\widetilde{\gamma}=\frac{3s+3 s^{2/3}-\alpha  s^{1/3}}{6 n },\quad \widetilde{\delta}=\alpha+1, \quad \widetilde{\epsilon}=-1,\\ \widetilde{\alpha}=n,\quad \widetilde{q}=\frac{-6 \alpha ^2-27 s^{2/3}+18 \alpha  s^{1/3}+1}{36} .
\end{align*}

Case II: for small $s$
$$
 \widetilde{\gamma}=\frac{s(\alpha+1)}{2n \alpha},\quad \widetilde{\delta}=\alpha+1,\quad \widetilde{\epsilon}=-1,\quad \widetilde{\alpha}=n,\quad \widetilde{q}=-\frac{s}{2\alpha}.$$
\end{theorem}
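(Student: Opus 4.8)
The plan is to reproduce the scheme used in the earlier theorems of this section. First I would substitute the auxiliary relations \eqref{CIr}, \eqref{CISR} and \eqref{CIb} into the coefficients $Q_n$ and $S_n$ of \eqref{CI1}, so that both become expressions in $R_n(t)$ and $R_n'(t)$ only, and then pass to the double-scaling variable $s=(2n+\alpha+1)t$. Writing $\nu=2n+\alpha+1$, one has $R_n(t)=\mathbb{R}_n(s)=\mathcal{O}(1/n)$ and $R_n'(t)=\nu\,\mathbb{R}_n'(s)=\mathcal{O}(1)$ for fixed $s$, directly from Proposition~\ref{CIp}. The crucial structural observation is that $R_n\to 0$, so the spurious regular singular point of \eqref{CI1} at $z=-R_n(t)$ drifts toward the origin and coalesces with the irregular singularity sitting there. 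This coalescence is exactly the mechanism that should collapse the singularity structure into the double confluent Heun form \eqref{CI3}, keeping irregular points only at $z=0$ and $z=\infty$.

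The key algebraic step is to control $Q_n$ and $S_n$ as $R_n\to 0$. For $Q_n$ I would expand $-1/(z+R_n)=-1/z+R_n/z^{2}-R_n^{2}/z^{3}+\cdots$, giving $Q_n=\frac{t+R_n}{z^{2}}+\frac{\alpha+1}{z}-1+\mathcal{O}(R_n^{2}/z^{3})$; since $R_n^{2}=\mathcal{O}(1/n^{2})$, the spurious $1/z^{3}$ tail is negligible and the leading coefficients read off as $\widetilde\gamma=t+R_n$, $\widetilde\delta=\alpha+1$, $\widetilde\epsilon=-1$, which upon inserting \eqref{CIRs1} and \eqref{CIRs2} reproduce the stated $\widetilde\gamma$ for large and small $s$ respectively. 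For $S_n$ I would decompose its rational part by partial fractions,
\begin{equation*}
\frac{nz^{2}+nR_{n}z-(nR_{n}+r_{n})}{z^{2}(z+R_{n})}=\frac{A}{z}+\frac{B}{z^{2}}+\frac{C}{z+R_{n}},
\end{equation*}
and exploit the exact identities $A+C=n$ and $B=CR_n$, together with $nR_n+r_n=\tfrac12\bigl(t+tR_n'-(1+\alpha)R_n-R_n^{2}\bigr)=\mathcal{O}(1/n)$ obtained from \eqref{CIr}. These make all cancellations transparent: the $1/z$ coefficient of $S_n$ is $A+C=n=\widetilde\alpha$, the double-pole coefficient collapses to $\sum_{j=0}^{n-1}R_j$ because $B-CR_n=0$, and the coalescing pole contributes a $1/z^{3}$ tail of size $CR_n^{2}=-(nR_n+r_n)=\mathcal{O}(1/n)$ that vanishes. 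Hence $S_n\to\dfrac{nz+\sum_{j=0}^{n-1}R_j}{z^{2}}$, which forces $\widetilde q=-\lim_{n\to\infty}\sum_{j=0}^{n-1}R_j$.

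The main obstacle is the evaluation of $\widetilde q$, that is, the limit of $\sum_{j=0}^{n-1}R_j$ under the scaling. Setting $\rho=\nu R_n$ and $t=s/\nu$, relation \eqref{CISR} gives $\sum_{j=0}^{n-1}R_j=-n(n+\alpha)-r_n+\beta_n$, where $\beta_n$ in \eqref{CIb} is a difference of two terms each of order $\nu^{2}$. I would expand $r_n$ and $\beta_n$ in powers of $1/\nu$ and verify that the $\nu^{2}$ and $\nu$ contributions cancel against $-n(n+\alpha)$, leaving a finite limit; the delicate point is that the surviving $\mathcal{O}(1)$ part of $\beta_n$ is where the $s^{2/3}$ and $s^{1/3}$ dependence (large $s$) and the linear-in-$s$ dependence (small $s$) of $\widetilde q$ originate, so the expansion of $\mathbb{R}_n(s)$ from \eqref{CIRs1}, \eqref{CIRs2} must be carried to enough orders in $s$ for these terms to be captured correctly. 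Once this bookkeeping is done, the two cases follow by substituting \eqref{CIRs1} and \eqref{CIRs2}, and a final comparison of the resulting equation with the standard double confluent Heun equation confirms the assignment of all five parameters $\widetilde\gamma,\widetilde\delta,\widetilde\epsilon,\widetilde\alpha,\widetilde q$.
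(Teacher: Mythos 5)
Your proposal is correct and follows essentially the same route as the paper's (very terse) proof: substitute \eqref{CIr}--\eqref{CIb} into \eqref{CI1}, pass to the double-scaling variable $s=(2n+\alpha+1)t$, and insert the expansions \eqref{CIRs1}, \eqref{CIRs2} from Proposition~\ref{CIp}. Your explicit bookkeeping --- the identities $A+C=n$, $B=CR_n$, $CR_n^2=-(nR_n+r_n)=\mathcal{O}(1/n)$, and the identification $\widetilde{q}=-\lim_{n\to\infty}\sum_{j=0}^{n-1}R_j(t)$ --- merely makes transparent the cancellations that the paper leaves implicit.
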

\begin{proof}
Substituting \eqref{CIr}--\eqref{CIb} into \eqref{CI1}, the coefficients of \eqref{CI1} can be expressed in terms of $R_n(t)$ and $R'_n(t)$.  Setting $s=(2n+\alpha+1)t$ and taking  $n\to\infty $, we substitute \eqref{CIRs1}, \eqref{CIRs2} and obtain the results.
\end{proof}

The following asymptotic expansions hold.
\begin{rem}
Let  $n\rightarrow\infty$, $t\rightarrow0^+$ and $s=(2n+\alpha+1)t$ be fixed.

Case I: for large $s$ we have
\begin{align}
r_n(s)&=\frac{s}{4 n}+\frac{1}{6} \left(\frac{1}{n}-3\right) s^{2/3}+\frac{\alpha  }{6}\left(1-\frac{1}{6n}\right)s^{1/3}+\mathcal{O}\left(s^{-1/3}\right),\label{CIrl}\\
\beta_n(s)&=n^2+\alpha  n +\frac{1}{12}\left(3-\frac{1}{n}\right)s^{2/3}-\frac{\alpha}{18}\left(6-\frac{1}{n} \right) s^{1/3}\notag\\
&~~~~~~~~~~~~~~~~~~~~~~~~~~~~~~~~~~~~~~~~+\frac{6 \alpha ^2-1}{36} +\mathcal{O}\left(s^{-1/3}\right),\label{CIbl}\\
\sum _{j=0}^{n-1} \mathbb{R}_j(s)&=-\frac{s}{4 n}+\frac{1}{4}\left(3-\frac{1}{ n}\right) s^{2/3}-\frac{\alpha}{12}\left(6-\frac{1}{n}\right) s^{1/3}\notag\\&~~~~~~~~~~~~~~~~~~~~~~~~~~~~~~~~~~~~~~~~+\frac{6 \alpha ^2-1}{36} +\mathcal{O}\left(s^{-1/3}\right) .\label{CISRl}
\end{align}

Case II: for small $s$ we have

\begin{align}
r_n(s)&=-\frac{s ( 2 n-\alpha-1)}{4 \alpha  n}+\frac{(n-1) s^2}{2 \alpha ^2 \left(\alpha ^2-1\right) n}\notag\\
&~~~~~~~~~~~~~~~~~~~~~~~~~~~~~~~~~~-\frac{3 (2 n-3) s^3}{4 \alpha ^3 \left(\alpha ^2-4\right) \left(\alpha ^2-1\right) n}+\mathcal{O}\left(s^4\right),\label{CIr2}\\
\beta_n(s)&=n (n+\alpha )+\frac{(n-1) s^2}{4 \alpha ^2 \left(\alpha ^2-1\right) n}-\frac{(2 n-3) s^3}{2 \alpha ^3 \left(\alpha ^2-4\right) \left(\alpha ^2-1\right) n}+\mathcal{O}\left(s^3\right),\label{CIb2}\\
\sum _{j=0}^{n-1} \mathbb{R}_j(s)&=\frac{s (2n-\alpha -1)}{4 \alpha  n}-\frac{(n-1) s^2}{4 \alpha ^2 \left(\alpha ^2-1\right) n}\notag\\
&~~~~~~~~~~~~~~~~~~~~~~~~~~~~~~~~~~-\frac{ (2 n-3) s^3}{4 \alpha ^3 \left(\alpha ^2-4\right) \left(\alpha ^2-1\right) n}+\mathcal{O}\left(s^4\right).\label{CISR2}
\end{align}

\begin{proof}
Since \eqref{CIr}--\eqref{CIb} are expressed in terms of $R_n(t)$ and $R_n'(t)$, setting $s=(2n+\alpha+1)t$ and combining with Proposition \ref{CIp} we obtain the results as  $s$ goes to $\infty$ and $0^+$ respectively.
\end{proof}
\end{rem}

\begin{corollary}
When $t=0$ the weight $x^\alpha {\rm e}^{-x-t/x}$ reduces to the classical Laguerre weight $x^\alpha {\rm e}^{-x}$. Equation \eqref{CI3} for small $s$  reduces to the Laguerre differential equation
\begin{align}\label{CI4}
P_n''(z)+\Big(\frac{\alpha+1}{z}-1\Big)P_n'(z) +\frac{n}{z}P_n(z)=0.
\end{align}
\begin{proof}
See the proof of   Corollary \ref{ECco}.
\end{proof}
\end{corollary}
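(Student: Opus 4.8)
The plan is to obtain \eqref{CI4} by degenerating the small-$s$ double confluent Heun equation \eqref{CI3} at $s=0$, and then to confirm the outcome independently through the ladder operator computation of Corollary~\ref{ECco}.

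First I would note that setting $t=0$ forces the scaling variable $s=(2n+\alpha+1)t$ to vanish, so the small-$s$ branch applies exactly at $s=0$. I would then substitute $s=0$ into the small-$s$ parameter list. Since $\widetilde{\gamma}=\frac{s(\alpha+1)}{2n\alpha}$ and $\widetilde{q}=-\frac{s}{2\alpha}$ are each proportional to $s$, both vanish, while $\widetilde{\delta}=\alpha+1$, $\widetilde{\epsilon}=-1$ and $\widetilde{\alpha}=n$ do not depend on $s$. Inserting these values into \eqref{CI3} removes the two $z^{-2}$ contributions and leaves
\begin{equation*}
P_n''(z)+\Big(\frac{\alpha+1}{z}-1\Big)P_n'(z)+\frac{n}{z}P_n(z)=0,
\end{equation*}
which is precisely \eqref{CI4}.

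To corroborate this without appealing to the Heun reduction, I would rerun the ladder operator argument of Corollary~\ref{ECco} for the classical Laguerre weight $x^\alpha{\rm e}^{-x}$, to which $x^\alpha{\rm e}^{-x-t/x}$ collapses at $t=0$. Here $v(x)=-\alpha\ln x+x$, so $v'(x)=-\alpha/x+1$ and the difference quotient $\frac{v'(z)-v'(y)}{z-y}=\frac{\alpha}{yz}$ is elementary; the integrals \eqref{A} and \eqref{B} then evaluate in closed form to $A_n(z)=1/z$ and $B_n(z)=-n/z$ (the latter by integration by parts, exactly as in Corollary~\ref{ECco}). Consequently $\sum_{j=0}^{n-1}A_j(z)=n/z$, and substituting $A_n$, $B_n$ and $v'$ into the master equation \eqref{H} yields the first-order coefficient $-\big(v'(z)+A_n'(z)/A_n(z)\big)=(\alpha+1)/z-1$ and the zeroth-order coefficient $B_n'(z)-B_n(z)A_n'(z)/A_n(z)+\sum_{j=0}^{n-1}A_j(z)=n/z$, recovering \eqref{CI4} a second time.

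I do not anticipate a real obstacle: this is a clean degeneration in which both the accessory parameter $\widetilde{q}$ and the inner singularity coefficient $\widetilde{\gamma}$ scale linearly in $s$ and hence switch off together. The one step requiring slight care is the evaluation $B_n(z)=-n/z$, the only non-elementary integral in the ladder operator route. The agreement of the two derivations additionally provides a consistency check that the limit $t\to0$ commutes with the double-scaling asymptotics used to establish \eqref{CI3}.
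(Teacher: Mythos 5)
Your proof is correct, and it in fact subsumes the paper's: the paper's entire proof is the cross-reference ``see the proof of Corollary~\ref{ECco}'', i.e.\ the ladder-operator computation $A_n(z)=1/z$, $B_n(z)=-n/z$, $\sum_{j=0}^{n-1}A_j(z)=n/z$ fed into \eqref{H}, which is exactly your corroborating second argument. What you add --- and what the paper never writes out --- is the direct degeneration of \eqref{CI3}: since $t=0$ forces $s=(2n+\alpha+1)t=0$, and in the small-$s$ parameter list $\widetilde{\gamma}=s(\alpha+1)/(2n\alpha)$ and $\widetilde{q}=-s/(2\alpha)$ are proportional to $s$ while $\widetilde{\delta}=\alpha+1$, $\widetilde{\epsilon}=-1$, $\widetilde{\alpha}=n$ are $s$-independent, both $z^{-2}$ contributions vanish and \eqref{CI3} collapses to \eqref{CI4}. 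That degeneration is arguably the more literal reading of the corollary as stated (``Equation \eqref{CI3} for small $s$ reduces to\ldots''), whereas the ladder-operator route re-derives the same ODE from the weight $x^{\alpha}{\rm e}^{-x}$ itself without invoking any asymptotics; having both, as you note, is a genuine consistency check that the $t\to 0$ limit agrees with the double-scaling approximation behind \eqref{CI3}. One small point in your favour: in the paper's proof of Corollary~\ref{ECco} the prefactor of $B_n$ is misprinted as $1/h_n$, whereas the definition \eqref{B} (and, implicitly, your computation) uses $1/h_{n-1}$, which is what makes $B_n(z)=-n/z$ come out of the integration by parts.
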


\section{Weights with a gap}

In this section we consider  weights with a gap. The weight  ${\rm e}^{-x^2}(1-\chi_{(-a,a)}(x)),\;x\in\mathbb{R},\;a>0$ was studied in  \cite{LC2}).
% probability with a gap symmetric about 0,
It was shown that the Gaussian gap probabilities may be determined as the product of the smallest distributions of the Laguerre unitary ensemble with some special parameter. %and the asymptotic expansions for the smallest eigenvalue distribution of LUE and JUE.
The weight  $(1-x^2)^\alpha(1-\chi(-a,a)(x))$ was  studied in \cite{MC2018}.
 %about the gap probability problem of the symmetric unitary ensemble of Hermitian random matrices, that's to say, the probability in the interval $(-a,a),\;0<a<1$ is free of eigenvalues, obtained three quantities, denoted by $H_n(a),R_n(a)$ and $r_n(a)$
It was shown that auxiliary quantities satisfy certain second order differential equations.
% instrumental in the gap probability have obtained via the technique of ladder operator and the associated supplementary conditions, and
 Also the connection to the  Jimbo-Miwa-Okamoto $\sigma$-form of the fifth Painlev\'e equation was obtained.
 %$P_V$ satisfied by $H_n(a)$, for further details see\cite{MC2018}.
For the weight $w(x)=(A+B\theta(x-t))x^\alpha{\rm e}^{-x}$ (see \cite{LC1}) the largest eigenvalue distribution with finite $n$ and large $n$ was studied. Moreover,  the asymptotic solution after soft edge scaling was derived and the   second order differential equations for auxiliary quantities related to recurrence coefficients were obtained.The connection to the second Painlev\'{e} equation, the $\sigma$-form and a particular case of Chazy's equation was also shown.

\subsection{${\rm e}^{-x^2}(1-\chi_{(-a,a)}(x)),\;x\in\mathbb{R},\;a>0$}

In \cite{CaoC2014,LC2}, the second order differential equation for  polynomials $P_n(x)$ orthogonal with respect to the weight ${\rm e}^{-x^2}(1-\chi_{(-a,a)}(x)),\;x\in\mathbb{R},\;a>0,$ was obtained. It is of the following form:
\begin{align}\label{1LC1}
P_n''(z)+ Q_n(z,a)P_n'(z)+S_n(z,a)P_n(z)=0,
\end{align}
where
\begin{align*}
  Q_n(z,a) &=\frac{2 aR_n(a) z}{(z^2-a^2)[2(z^2-a^2)+aR_n(a)]}-2z, \\
  S_n(z,a) &=-\frac{r_n(a) \left(2 \left(a^2+z^2\right)-a R_n(a)\right)}{(a-z) (a+z) \left(2 a^2-a R_n(a)-2 z^2\right)}+2n+\frac{a\sum_{j=0}^{n-1}R_j(a)}{z^2-a^2}
\end{align*}
with
$\sum_{j=0}^{n-1}R_j(a)$ and $r_n(a)$ satisfying
\begin{align}
r_n(a)&=\frac{[R'_n(a)-R_n(a)^2+2aR_n(a)]a}{4a-2R_n(a)},\label{1LCr}\\
\sum_{j=0}^{n-1}R_j(a)&=\frac{2r_n(a)^2}{R_n(a)}+(n+r_n(a))R_n(a)-(2a+\frac{r_n(a)}{a})r_n(a)\label{1LCSR}.
\end{align}

As usually, we will use the asymptotic expressions for auxiliary quantities to  reduce the  second order differential equation to a simpler form, which turns out to be one of the Heun equations. We  consider the case $a>0$ not tending to $0$, so that the parameter $a$ appears in  the denominator of the asymptotic expansions.
\begin{theorem}
Let  $n\rightarrow\infty$ and $a>0$. The  polynomials $P_n(z)$ orthogonal with respect to the weight ${\rm e}^{-x^2}(1-\chi_{(-a,a)}(x)),\;x\in\mathbb{R},\;a>0$, satisfy the confluent Heun equation
\begin{align}\label{1LC2}
\widehat{P}_n''(u)+\Big(\frac{1} {u-1}-\frac{1} {2u}-t\Big)\widehat{P}_n'(u)+\frac{2ntu+\sqrt{2nt}}{4u(u-1)}\widehat{P}_n(u)=0.
\end{align}
Here $z=a\sqrt{u},\; t=a^2,$  and $\widehat{P}_n(u):=P_n(a\sqrt{u})$.
\end{theorem}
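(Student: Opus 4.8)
The plan is to follow the same scheme used for the weights of Sections~\ref{SectionJ1} and~3: first extract the large-$n$ behaviour of the single auxiliary quantity $R_n(a)$, then feed it through the relations \eqref{1LCr}--\eqref{1LCSR} into the coefficients $Q_n(z,a)$ and $S_n(z,a)$ of \eqref{1LC1}, retain the dominant orders in $n$, and finally apply the change of variables $z=a\sqrt u$, $t=a^2$, $\widehat P_n(u)=P_n(a\sqrt u)$ to recognise the result as the confluent Heun equation \eqref{1LC2}.

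First I would obtain the asymptotics of $R_n(a)$. Starting from the second-order nonlinear differential equation satisfied by $R_n(a)$ (see \cite{CaoC2014,LC2}) and discarding its derivative part, one is left with an algebraic equation whose relevant root forces the leading behaviour $R_n(a)\to 2a$ as $n\to\infty$. This value is exactly the one that makes the first-order coefficient collapse to Heun form: with $z^2-a^2=a^2(u-1)$ and $z=a\sqrt u$, the purely rational part of $Q_n$ reduces to $\tfrac{1}{u(u-1)}$ precisely when $R_n=2a$, and after the substitution it combines with the Jacobian term $\tfrac{1}{2u}$ to give $\tfrac{1}{u-1}-\tfrac{1}{2u}$. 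To capture the finer structure I would posit an expansion of the form $R_n(a)=2a+\tfrac{1}{\sqrt{2n}}+\cdots$ and fix the correction by matching powers in the nonlinear equation.

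For the change of variables I would compute, from $u=(z/a)^2$, that $P_n'(z)=\tfrac{2z}{a^2}\widehat P_n'(u)$ and $P_n''(z)=\tfrac{4u}{a^2}\widehat P_n''(u)+\tfrac{2}{a^2}\widehat P_n'(u)$, so that multiplying \eqref{1LC1} by $a^2/(4u)$ turns the coefficient of $\widehat P_n'$ into $\tfrac{1}{2u}+\tfrac{a}{2\sqrt u}Q_n$ and the coefficient of $\widehat P_n$ into $\tfrac{a^2}{4u}S_n$. The term $-2z$ in $Q_n$ contributes $\tfrac{a}{2\sqrt u}(-2a\sqrt u)=-a^2=-t$, producing the constant $-t$ of \eqref{1LC2}; and substituting the expansion of $R_n(a)$ into $S_n$ (with $r_n$ and $\sum_{j=0}^{n-1}R_j$ eliminated through \eqref{1LCr} and \eqref{1LCSR}) should yield $S_n\sim\tfrac{2nu}{u-1}+\tfrac{\sqrt{2n}}{a(u-1)}$, whence $\tfrac{a^2}{4u}S_n=\tfrac{2ntu+\sqrt{2nt}}{4u(u-1)}$ after using $t=a^2$ and $a\sqrt{2n}=\sqrt{2nt}$.

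The main obstacle is the $\sqrt{2nt}$ term. The denominator $4a-2R_n(a)$ in \eqref{1LCr} tends to $0$, so the leading value $R_n=2a$ alone renders $r_n$ an indeterminate $0/0$ expression; one genuinely needs the next correction $R_n(a)=2a+\tfrac{1}{\sqrt{2n}}+\cdots$, which makes $r_n\sim -a\sqrt{2n}$ diverge at rate $\sqrt n$ and thereby feeds $-\tfrac{r_n}{a^2}\sim\tfrac{\sqrt{2n}}{a}$ into $S_n$. One must then check that in $\sum_{j=0}^{n-1}R_j$, computed from \eqref{1LCSR}, the two diverging contributions of orders $1/\varepsilon^2$ and $1/\varepsilon$ (with $\varepsilon=R_n-2a$) cancel, leaving $2an$ up to lower-order terms, and that the pieces of $S_n$ carrying the ``wrong'' $u$-dependence $\tfrac{1}{u(u-1)}$ are of order $1$ and hence negligible beside the $n$ and $\sqrt n$ contributions. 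Verifying this delicate bookkeeping is where the real work lies; the remaining differentiation and simplification are routine.
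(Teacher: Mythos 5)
Your proposal is correct and follows essentially the same route as the paper: extract the expansion $R_n(a)=2a+\tfrac{1}{\sqrt{2n}}+\cdots$ from the nonlinear second-order equation for $R_n$ by dropping the derivative terms and matching a series in powers of $n^{-1/2}$, feed it through \eqref{1LCr}--\eqref{1LCSR} into \eqref{1LC1} to obtain the intermediate equation \eqref{1LC3}, and then apply $z=a\sqrt u$, $t=a^2$. Your explicit bookkeeping of the delicate points --- the $0/0$ structure of $r_n$ caused by the vanishing denominator $4a-2R_n$, the cancellation of the $O(\sqrt n)$ terms in $\sum_{j=0}^{n-1}R_j(a)$ leaving $2an+O(1)$, and the change-of-variables algebra producing $\tfrac{1}{u-1}-\tfrac{1}{2u}-t$ and $\tfrac{2ntu+\sqrt{2nt}}{4u(u-1)}$ --- is consistent with the paper's stated expansions and simply makes explicit what the paper leaves implicit.
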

\begin{proof}
As shown in \cite{LC2}, the auxiliary quantity  $R_n(a)$ satisfies the following second order differential  equation
\begin{align}\label{1PL3}
R''_n=\frac{R_n-a}{R_n-2a}\frac{(R'_n)^2}{R_n}-\frac{R_nR'_n}{a(R_n-2a)}+\frac{R_n}{a}(R_n-2a)(aR_n-a^2+2n+1).
\end{align}
 Disregarding the derivative parts of the equation above, we obtain
$$\widetilde{R}_n(a)^2 (\widetilde{R}_n(a)-2 a)^2 \left(-a^2+a \widetilde{R}_n(a)+2 n+1\right)=0,$$
which is solved by
\begin{align*}
  \widetilde{R}_{n1,2}(a) =2a, \quad  \widetilde{R}_{n3}(a) =\frac{a^2-2 n-1}{a}.
\end{align*}
Assuming that $R_n(a)$ has the form
\begin{equation*}
R_n(a)=\sum_{j=0}^{\infty}b_j(a) n^{-j/2},\quad n\rightarrow \infty,
\end{equation*}
 substituting the series above into \eqref{1PL3}, with $R_n(a)\geq0$ and sending $n\rightarrow\infty$, we obtain
\begin{equation}\label{1LCR1}
R_n(a)=2 a+\frac{1}{\sqrt{2n}}-\frac{4 a^4+4 a^2-1}{16a^2 \sqrt{2}  n^{3/2}}-\frac{4 a^4+1}{32 a^3 n^2}+\mathcal{O}\left(\frac{1}{n^{5/2}}\right).
\end{equation}
Plugging \eqref{1LCr}, \eqref{1LCSR} into \eqref{1LC1}, sending $n\rightarrow\infty$ and combining with \eqref{1LCR1} we obtain
\begin{align}\label{1LC3}
P_n''(z)+\Big(\frac{2a^2} {z(z^2-a^2)}-2z\Big)P_n'(z)+\frac{2nz^2+\sqrt{2n}a}{z^2-a^2}P_n(z)=0,
\end{align}
which can be reduced to  a confluent Heun equation.

Let$$z=a\sqrt{u},\quad t=a^2.$$
Then $\widehat{P}_n(u):=P_n(a\sqrt{u})$ satisfies the confluent Heun equation \eqref{1LC2} with parameters
$$\widetilde{\gamma}=-1/2,\quad \widetilde{\delta}=1,\quad \widetilde{\epsilon}=-t,\quad \widetilde{a}=nt/2,\quad q=-\sqrt{2nt}/4.$$
\end{proof}

\begin{corollary}
The gap disappear when $a=0$. The weight $w(x)$ reduces to  the classical Gaussian weight ${\rm e}^{-x^2}$ for $x\in \mathbb{R}$. The orthogonal polynomials $P_n(z)$ reduce to the   Hermite polynomials $H_n(z)$ and \eqref{1LC3} reduces to the Hermite differential Equation\footnote{http://mathworld.wolfram.com/HermiteDifferentialEquation.html},
\begin{align}\label{1LC4}
P_n''(z)-2zP_n'(z)+2nP_n(z)=0.
\end{align}

\begin{proof}
Let us use ladder operators. For the weight   $w(x)= {\rm e}^{-x^2}$ we have $v(x)=x^2$ and $v'(x)=2x$.
 From \eqref{A}--\eqref{B} we have
\begin{align*}
  A_n(z)&=\frac{1}{h_n}\int_0^\infty\;2P^2_n(y){\rm e}^{-y^2}\;{\rm d}y=2,\\
   B_n(z)&=\frac{1}{h_n}\int_0^\infty\;2P_n(y)P_{n-1}(y){\rm e}^{-y^2}\;{\rm d}y=0.
\end{align*}
Recalling \eqref{H}, we obtain
\begin{align*}
 -( v'(z)+\frac{A'_n(z)}{A_n(z)}) &=-2z,  \\
  B'_n(z)-B_n(z)\frac{A'_n(z)}{A_n(z)}+\sum_{j=0}^{n-1}A_j(z)&=2n,
\end{align*}
which produces \eqref{1LC4}.
\end{proof}

\end{corollary}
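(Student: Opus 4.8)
The plan is to prove the reduction in two independent ways and then reconcile them. The quickest route is to specialize the large-$n$ equation \eqref{1LC3} directly at $a=0$. Writing the coefficient of $P_n'(z)$ as $\tfrac{2a^2}{z(z^2-a^2)}-2z$ and the coefficient of $P_n(z)$ as $\tfrac{2nz^2+\sqrt{2n}\,a}{z^2-a^2}$, I would let $a\to 0$ with $z\neq 0$ fixed: the first fraction vanishes like $a^2$, while the second tends to $\tfrac{2nz^2}{z^2}=2n$. Hence \eqref{1LC3} collapses to $P_n''(z)-2zP_n'(z)+2nP_n(z)=0$, which is \eqref{1LC4}. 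At the level of the weight, $1-\chi_{(-a,a)}$ reduces to the constant $1$ (the indicator of the single point $\{0\}$ being a null set), so $w$ becomes the Gaussian $\rme^{-x^2}$ on $\R$ and the monic $P_n$ become the monic Hermite polynomials $H_n$.

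I would then give the exact, $n$-independent confirmation via ladder operators, which is the cleaner argument because it never invokes the asymptotic equation \eqref{1LC3}. For $w(x)=\rme^{-x^2}$ one has $v(x)=x^2$ and $v'(x)=2x$, so the divided difference appearing in \eqref{A}--\eqref{B} is the constant $\tfrac{v'(z)-v'(y)}{z-y}=2$. Pulling this constant out of the integrals gives $A_n(z)=\tfrac{2}{h_n}\int_{\R}P_n^2(y)\rme^{-y^2}\,{\rm d}y=2$ and, by orthogonality of $P_n$ against $P_{n-1}$, $B_n(z)=\tfrac{2}{h_{n-1}}\int_{\R}P_n(y)P_{n-1}(y)\rme^{-y^2}\,{\rm d}y=0$. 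Substituting $A_n\equiv 2$, $A_n'\equiv 0$, $B_n\equiv 0$ and $\sum_{j=0}^{n-1}A_j=2n$ into the master equation \eqref{H} yields the coefficient $-(v'(z)+A_n'(z)/A_n(z))=-2z$ of $P_n'(z)$ and the coefficient $B_n'(z)-B_n(z)A_n'(z)/A_n(z)+\sum_{j=0}^{n-1}A_j(z)=2n$ of $P_n(z)$, i.e. exactly \eqref{1LC4}.

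The step I expect to require the most care is the reconciliation of the two limits rather than either computation on its own. Equation \eqref{1LC3} is a large-$n$ equation: the auxiliary quantities $R_n(a)$, $r_n(a)$ and $\sum_{j=0}^{n-1}R_j(a)$ were already replaced by their $n\to\infty$ expansions, notably \eqref{1LCR1}. Setting $a=0$ afterwards amounts to exchanging the order of the limits $a\to 0$ and $n\to\infty$, and one must check that these commute; the ladder-operator derivation, being exact for every $n$, shows that the $a=0$ specialization does recover the genuine Hermite operator and thereby justifies the exchange \emph{a posteriori}. A secondary point worth noting is geometric: the three finite singular points $z=0,\pm a$ of \eqref{1LC3} coalesce as $a\to 0$, and the content of the corollary is precisely that this confluence leaves no residual singular term in the coefficient of $P_n'(z)$ (the $\tfrac{2a^2}{z(z^2-a^2)}$ contribution dies), so the limit is the entire Hermite operator with its single irregular singularity at infinity.
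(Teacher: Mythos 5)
Your proof is correct, and its second half coincides with the paper's own argument: the paper proves the corollary exclusively via the ladder operators, computing $A_n(z)=2$, $B_n(z)=0$ from \eqref{A}--\eqref{B} (the divided difference of $v'(x)=2x$ being the constant $2$, with $B_n=0$ by orthogonality) and substituting into \eqref{H} to get \eqref{1LC4}, exactly as you do. What you add, and what the paper omits, is the direct specialization $a\to 0$ of \eqref{1LC3} itself together with the observation that this amounts to exchanging the limits $a\to 0$ and $n\to\infty$ (since \eqref{1LC3} was derived using the large-$n$ expansion \eqref{1LCR1}, valid for $a>0$); the paper silently sidesteps this by working only with the exact, finite-$n$ ladder-operator identities for the Gaussian weight, while your two-pronged version makes the consistency of the confluence of the singular points $z=0,\pm a$ explicit. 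Two small points in your favor relative to the printed proof: you correctly take the integrals in $A_n$, $B_n$ over all of $\mathbb{R}$ (the paper writes $\int_0^\infty$, evidently a typo for the Gaussian weight), and you normalize $B_n$ by $h_{n-1}$ in accordance with \eqref{B}, where the paper's displayed formula uses $h_n$.
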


\begin{rem}
The auxiliary quantities $\sum_{j=0}^{n-1}R_j(a)$ and $r_n(a)$ have the following expansions when $n$ is large  and $a>0$:
\begin{align*}
  r_n(a)=&-\sqrt{2n} a+a^2+\frac{1-4 a^4 }{8 \sqrt{2n} a} +\mathcal{O}\left(\frac{1}{n}\right), \\
  \sum_{j=0}^{n-1}R_j(a)=&2 a n+\frac{1}{4 a}-\frac{1}{2 \sqrt{2n}}+\frac{4 a^4+1}{32 a^3 n}+\mathcal{O}\left(\frac{1}{n^{3/2}}\right).
\end{align*}
\end{rem}

\subsection{$(1-x^2)^\alpha(1-\chi_{(-a,a)}(x)), \;x\in[-1,1],\; a\in(0,1),\, \alpha>0$}

 From the definition of $R_n(a)$ and $r_n(a)$ (see Min and Chen \cite{MC2018})    we have
\begin{equation}\label{MC1}
P_n''(z)+Q_n(z,a)P_n'(z) +S_n(z,a)P_n(z)=0,
\end{equation}
where
\begin{align*}
Q_n(z,a)=&\frac{2(\alpha+1)z}{z^2-1}+\frac{2z}{z^2-a^2}-\frac{2 z (2 \alpha +2 n+1)}{\left(a-a^3\right) R_n(a)-\left(a^2-z^2\right) (2 \alpha +2 n+1)}, \\
S_n(z,a)=&\frac{\left(1-a^2\right) r_n(a) \left(\left(a^2+z^2\right) (2 \alpha +2 n+1)+a \left(a^2-1\right) R_n(a)\right)}{\left(z^2-1\right) \left(a^2-z^2\right) \left(\left(a^2-z^2\right) (2 \alpha +2 n+1)+a \left(a^2-1\right) R_n(a)\right)}\\
&-\frac{n \left(\left(a^2-z^2\right)^2 (2 \alpha +2 n+1)+a \left(a^2-1\right) R_n(a) \left(a^2-3 z^2\right)\right)}{\left(z^2-1\right) \left(z^2-a^2\right) \left(\left(a-a^3\right) R_n(a)+\left(z^2-a^2\right) (2 \alpha +2 n+1)\right)}\\
&+\frac{(n^2+2\alpha n)(a^2-z^2)-a(1-a^2)\sum_{j=0}^{n-1}R_j(a)}{(z^2-a^2)(z^2-1)}
\end{align*}
with  $r_n(a)$, $\sum_{j=0}^{n-1}R_j(a)$, $\beta_n(a)$ given by
\begin{align}
 r_n(a)&=\frac{a \left(-\left(a^2-1\right) R_n'(a)+\left(a^2-1\right) R_n(a){}^2+2 a (\alpha +n) R_n(a)\right)}{2 \left(\left(a^2-1\right) R_n(a)+a (2 \alpha +2 n+1)\right)},\label{MCr}\\
\sum_{j=0}^{n-1}R_j(a)&=\frac{2 \left(a^2-1\right) (\alpha +n) r_n(a)+\left(4 (\alpha +n)^2-1\right) \beta _n(a)-n (2 \alpha +n)}{a \left(a^2-1\right)}\label{MCSR}
\end{align}
and
\begin{equation}\label{MCb}
\beta_n(a)=\frac{1}{2n+2 \alpha -1}\left[\frac{\left(r_n(a)+n\right) \left(r_n(a)+2 \alpha +n\right)}{a R_n(a)+2 \alpha +2 n+1}-\frac{a r_n(a){}^2}{R_n(a)}\right].
\end{equation}
Hence, the coefficients of \eqref{MC1}  depend only on $R_n(a)$ and $R_n'(a)$.

\begin{proposition}\label{MCc}
For large $n$ and $a>0$ we have
\begin{align}\label{MCR}
R_n(a)=&\frac{2 a n}{1-a^2}+\frac{2 \alpha  a+a+1}{1-a^2}+\frac{4 \alpha ^2 a^2-a^2+1}{8 a^2 n^2}\notag\\
&-\frac{a (2 \alpha +1) \left[(2 \alpha -1) a^2 (a+2 \alpha +1)+1\right]+1}{8 a^3 n^3}+\mathcal{O}\left(\frac{1}{n^{4}}\right),\quad n\rightarrow \infty.
\end{align}
\end{proposition}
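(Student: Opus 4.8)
The plan is to follow the same route used in Sections 2 and 4: start from a closed second-order nonlinear differential equation for $R_n(a)$ alone, extract the leading algebraic balance by discarding the derivative terms, and then bootstrap a full asymptotic series by coefficient matching. First I would recall from Min and Chen \cite{MC2018} the second-order nonlinear ODE satisfied by $R_n(a)$ for the weight $(1-x^2)^\alpha(1-\chi_{(-a,a)}(x))$; this is the analogue of \eqref{nonlinear eq Rn section 2.1}, \eqref{R2} and \eqref{1PL3}, obtained by combining \eqref{MCr}--\eqref{MCb} with the supplementary conditions $(S_1)$, $(S_2)$, $(S_2')$ (equivalently, it is the equation whose $\sigma$-form is the fifth Painlev\'e equation). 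Since $a\in(0,1)$ is held fixed while $n\to\infty$, no double scaling is needed and the expansion proceeds in integer powers of $n$.

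Next I would drop the $R_n'(a)$ and $R_n''(a)$ contributions from this ODE. A power count shows these are subleading: with $R_n=O(n)$ the purely algebraic part of the equation dominates, so setting the derivatives to zero yields a polynomial equation $P(\widetilde{R}_n;a,n)=0$ whose roots give the candidate leading behaviours, in direct analogy with the factored equations for $\widetilde{R}_n$ in the preceding sections. I expect one root to grow linearly, $\widetilde{R}_n\sim\tfrac{2an}{1-a^2}$, and this is the relevant one by the positivity requirement $R_n(a)\ge 0$.

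The main obstacle, which I would flag explicitly, is that the denominator factor $(a^2-1)R_n(a)+a(2\alpha+2n+1)$ appearing throughout \eqref{MCr}--\eqref{MCb} vanishes to leading order precisely at $R_n=\tfrac{2an}{1-a^2}$: indeed $(a^2-1)\tfrac{2an}{1-a^2}+2an=0$. Thus the algebraic balance sits exactly on a near-singularity of the auxiliary relations, and it is the subleading coefficients that restore finiteness, the combination becoming $O(1)$ once the constant term $b_0(a)=\tfrac{2\alpha a+a+1}{1-a^2}$ is included. Care is therefore needed to clear all denominators before expanding, so that no spurious cancellations or divergences are introduced.

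Finally I would posit the ansatz $R_n(a)=\sum_{j=0}^{\infty}b_j(a)\,n^{1-j}$, substitute it into the full nonlinear ODE (now retaining the derivative terms, which contribute only to the subleading coefficients), and match coefficients of like powers of $n$ to solve recursively for the $b_j(a)$. This reproduces \eqref{MCR}; in particular the matching forces the coefficient of $n^{-1}$ to vanish, which is why the expansion jumps from the constant term directly to order $n^{-2}$. The computation is routine but heavy, and is most safely carried out with computer algebra, exactly as in the earlier propositions.
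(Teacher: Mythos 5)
Your proposal is correct and follows essentially the same route as the paper: take the nonlinear second-order ODE for $R_n(a)$ from Min and Chen \cite{MC2018} (the paper derives it from the coupled Riccati system for $R_n'$ and $r_n'$), discard the derivative terms to identify the leading algebraic balance, then substitute the ansatz $R_n(a)=\sum_{j\ge 0}b_j(a)n^{1-j}$ into the full equation and match coefficients. Your explicit flagging of the near-cancellation in the denominator $(a^2-1)R_n(a)+a(2\alpha+2n+1)$, which is only $\mathcal{O}(1)$ rather than $\mathcal{O}(n)$ along the relevant root, is a sensible refinement of the same argument rather than a different method.
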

\begin{proof}
The second order differential equation for $R_n(a)$ can be obtained from the following system  (see \cite{MC2018}):
$$   R_n'(a)=R_n^2(a)+\frac{2a^2(n+\alpha)-2(\alpha^2-1)r_n(a)}{a(a^2-1)}R_n(a)-\frac{2(2n+2\alpha+1)}{a^2-1}r_n(a),
$$
\begin{align*}
r_n'(a)=&\frac{\left[(1-a^2)r_n^2(a)+2(n+\alpha)r_n(a)+n^2+2n\alpha\right]R_n(a)-a(2n+2\alpha+1)r^2_n(a)}
{(2n+2\alpha-1)R_n(a)(aR_n(a)+2n+2\alpha+1)}\\
&-\frac{2(n+\alpha)R_n(a)r_n(a)+n(n+2\alpha)R_n(a)}{(1-a^2)(aR_n(a)+2n+2\alpha+1)}.
\end{align*}
 If we consider the non-derivative part and terms with $n^4$, $n^3$ and $n^2$, then we have
\begin{align*}
      &4 n^2\widetilde{R}_{n}(a)^2 \left(a^2 \left(4 \left(6 a^2-5\right) \alpha ^2+8 \left(a^2-1\right) \alpha  (2 n+3)+\left(a^2-1\right) (4 n (n+2)+5)\right)\right.\\&\left.+\widetilde{R}_{n}(a) \left(2 \left(2 a^5-3 a^3+a\right) (6 \alpha +2 n+3)+\left(6 a^6-12 a^4+7 a^2-1\right) \widetilde{R}_{n}(a)\right)\right)=0.
\end{align*}
with the solution
\begin{align*}
 \widetilde{R}_{n1,2}(a)=& -\frac{2 a n}{a \left(2a\pm\sqrt{2-2 a^2}\right)-1}+\frac{\left(\sqrt{2}\pm6 a \sqrt{1-a^2} \left(1-2 a^2\right)\right) (2 \alpha +1)}{2 \sqrt{1-a^2} \left(6 a^4-6 a^2+1\right)}\\
 &~~~~~~~~~~~~~~~~~~\mp\frac{2 a^2 \left(a^2-4 \alpha ^2-1\right)+(2 \alpha +1)^2}{8 \sqrt{2} a^2 \left(1-a^2\right)^{3/2} n}+\mathcal{O}\left(\frac{1}{n^{2}}\right),\; n\rightarrow\infty.
\end{align*}

Assuming that  $R_n(a)$ is of the following form:
$$R_n(a)=\sum_{j=0}^\infty b_{j}(a)n^{1-j},\quad n\rightarrow\infty,$$
and substituting the  expression above into the second order differential equation for $R_n(a)$, we obtain  \eqref{MCR} as $n\rightarrow\infty$.
\end{proof}

\begin{theorem}
Let $n\rightarrow\infty$.  The differential equation for polynomials $P_n(x)$ orthogonal with respect to $(1-x^2)^\alpha(1-\chi(-a,a)(x))$ over $[-1,1]$ reduces to the geeneral Heun equation
\begin{equation}\label{MC2}
\widehat{P}_n''(u)+\left(-\frac{1}{2u}+\frac{\alpha +1}{u-1}+\frac{1}{u-t}\right)\widehat{P}_n'(u)-\frac{u (2 \alpha +n+1)n+n \sqrt t}{4u\left(u-1\right) \left(u-t\right)}\widehat{P}_n(u)=0,
\end{equation}
where $\widehat{P}_n(u):=P_n(\sqrt u)$.
\end{theorem}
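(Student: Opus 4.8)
The plan is to follow the same template used in the preceding theorems of Sections~2 and~4.1: first render the coefficients of \eqref{MC1} as explicit rational functions of $z$ whose only $n$-dependence enters through $R_n(a)$ and $R_n'(a)$, then insert the large-$n$ asymptotics of $R_n(a)$ supplied by Proposition~\ref{MCc}, retain the dominant order in $n$ in each residue, and finally pass to the even variable $u=z^2$. The reason this weight produces a \emph{general} Heun equation rather than one of the confluent types (as in 4.1) is that the gap endpoints $z=\pm a$ survive the limit as a genuine finite singularity at $u=t=a^2$, so that after the substitution one is left with exactly four regular singular points $u=0,1,t,\infty$.

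First I would substitute \eqref{MCr}, \eqref{MCSR} and \eqref{MCb} into $Q_n(z,a)$ and $S_n(z,a)$ to eliminate $r_n(a)$, $\sum_{j=0}^{n-1}R_j(a)$ and $\beta_n(a)$. This leaves $Q_n$ with the fixed poles at $z=\pm1$, $z=\pm a$ together with a single $n$-dependent apparent pole carried by the factor $(a-a^3)R_n(a)-(a^2-z^2)(2\alpha+2n+1)$, and $S_n$ with the same apparent pole plus the fixed ones. Next I would insert $R_n(a)=\frac{2an}{1-a^2}+\frac{2\alpha a+a+1}{1-a^2}+O(n^{-2})$ from Proposition~\ref{MCc}, differentiating termwise in $a$ for $R_n'(a)$. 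The decisive observation is that the leading coefficient $2an/(1-a^2)$ forces $(a-a^3)R_n(a)=2a^2n+O(1)$, so the apparent-pole denominator reduces to $2nz^2+O(1)$: its drifting root $z^2=a^2+\tfrac{a(a^2-1)R_n}{2\alpha+2n+1}$ migrates to $z=0$ as $n\to\infty$, and the corresponding term in $Q_n$ tends to $-2/z$. One checks that the same mechanism removes the apparent pole from $S_n$, after several $O(n^2)$ and $O(n)$ cancellations (for instance the cancellation of $(a^2-1)R_n^2$ against $2a(\alpha+n)R_n$ in the numerator of $r_n$).

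Retaining the dominant order in $n$ in every residue then yields the intermediate equation
\begin{equation*}
P_n''(z)+\left(\frac{2(\alpha+1)z}{z^2-1}+\frac{2z}{z^2-a^2}-\frac{2}{z}\right)P_n'(z)-\frac{n(2\alpha+n+1)z^2+na}{(z^2-1)(z^2-a^2)}P_n(z)=0.
\end{equation*}
Finally, using the even symmetry of the weight, I would set $u=z^2$, $t=a^2$, $\widehat{P}_n(u)=P_n(\sqrt u)$, so that $P_n'(z)=2z\widehat{P}_n'(u)$ and $P_n''(z)=2\widehat{P}_n'(u)+4u\widehat{P}_n''(u)$. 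Dividing through by $4u$ and collapsing the pairs $z=\pm1\mapsto u=1$ and $z=\pm a\mapsto u=t$ converts the first-derivative coefficient into $-\frac{1}{2u}+\frac{\alpha+1}{u-1}+\frac{1}{u-t}$ and the zeroth-order coefficient into $-\frac{u(2\alpha+n+1)n+n\sqrt t}{4u(u-1)(u-t)}$, which is exactly \eqref{MC2}.

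I expect the main obstacle to be the bookkeeping in the middle step. One must verify that the $n$-dependent apparent singularity of both $Q_n$ and $S_n$ cancels cleanly, so that no spurious fifth singularity survives and the limit is genuinely Fuchsian, and one must confirm that the surviving residues at $z=\pm1$ and $z=\pm a$ assemble into precisely the numerator $n(2\alpha+n+1)z^2+na$ rather than some nearby expression. A convenient independent check is to test the Fuchsian relation \eqref{Fuchr} against the Heun parameters read off from \eqref{MC2}, which guards against sign and order-of-magnitude errors in the truncation.
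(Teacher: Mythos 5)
Your proposal is correct and follows essentially the same route as the paper: substitute \eqref{MCr}--\eqref{MCb} into \eqref{MC1}, insert the asymptotics of $R_n(a)$ from Proposition~\ref{MCc}, keep the dominant order in $n$ to reach exactly the paper's intermediate equation \eqref{MC3}, and then pass to $u=z^2$, $t=a^2$. The extra details you supply (the apparent pole drifting to $z=0$ so that its term contributes $-2/z$, and the Fuchsian-relation consistency check) are sound and merely make explicit what the paper leaves to the reader.
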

\begin{proof}
Substituting \eqref{MCr}--\eqref{MCb} into \eqref{MC1} and sending $n\rightarrow \infty $, we plug in  the asymptotic expression for $R_n(a)$  \eqref{MCR} and find that the polynomials  $P_n(z)$ satisfy
\begin{equation}\label{MC3}
P_n''(z)+\left(\frac{2 z}{z^2-a^2}+\frac{2 (\alpha +1) z}{z^2-1}-\frac{2}{z}\right)P_n'(z)-\frac{n a+z^2 (2 \alpha +n+1)n}{\left(z^2-1\right) \left(z^2-a^2\right)}P_n(z)=0.
\end{equation}
Let
$$z=\sqrt{u},\quad a^2=t.$$
Then $\widehat{P}_n(u):=P_n(\sqrt u)$ satisfy the general Heun equation \eqref{MC2} with parameters
$$\widetilde{\gamma}=-1/2,\quad \widetilde{\delta}=\alpha+1,\quad \widetilde{\epsilon}=1,\quad \widetilde{\alpha}\widetilde{\beta}=-n(n+2\alpha+1)/4,\quad \widetilde{q}=\sqrt t n/4.$$
\end{proof}
\begin{corollary}
Equation \eqref{MC3}   reduces to the Jacobi differential equation when $a=0$:
\begin{equation}\label{MC4}
P_n''(z)+\frac{2 (\alpha +1) z}{z^2-1}P_n'(z)-\frac{(2 \alpha +n+1)n}{z^2-1 }P_n(z)=0.
\end{equation}
\begin{proof}
The weight $(1-x^2)^\alpha$ is the classical Jacobi weight in case $\beta=\alpha$. See the proof of  Corollary  \ref{Zc} for $x^\alpha(1-x)^\beta$ and Chen, Ismail \cite{ChenIsmail2005} for $(1-x)^\alpha(1+x)^\beta$.
\end{proof}
\end{corollary}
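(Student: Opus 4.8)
The plan is to obtain the corollary by specializing the large-$n$ equation \eqref{MC3} at $a=0$, the geometric point being that the gap $\chi_{(-a,a)}$ shrinks to the single point $\{0\}$ of Lebesgue measure zero, so that the weight $(1-x^2)^\alpha(1-\chi_{(-a,a)}(x))$ degenerates to the classical symmetric Jacobi weight $(1-x^2)^\alpha$. First I would treat the coefficient of $P_n'(z)$ in \eqref{MC3}. The only pieces carrying the parameter $a$ are $\frac{2z}{z^2-a^2}$ together with the accompanying $-\frac{2}{z}$; setting $a=0$ turns $\frac{2z}{z^2-a^2}$ into $\frac{2z}{z^2}=\frac{2}{z}$, so these two terms cancel identically and leave exactly $\frac{2(\alpha+1)z}{z^2-1}$. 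This coalescence of the two gap-singularities at $z=\pm a$ with the origin is the one structural fact that matters, and it is what makes the limit regular.

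Next I would treat the coefficient of $P_n(z)$. At $a=0$ the numerator $na+z^2(2\alpha+n+1)n$ loses its $na$ term and reduces to $z^2(2\alpha+n+1)n$, while the denominator $(z^2-1)(z^2-a^2)$ becomes $z^2(z^2-1)$; the common factor $z^2$ cancels, producing $-\frac{(2\alpha+n+1)n}{z^2-1}$. Substituting both simplified coefficients back into \eqref{MC3} reproduces \eqref{MC4} verbatim, which completes the direct verification. As an independent consistency check I would re-derive \eqref{MC4} from the ladder operators exactly as in Corollary~\ref{Zc}: for $w(x)=(1-x^2)^\alpha$ one has $v'(x)=\frac{2\alpha x}{1-x^2}$, and inserting this into \eqref{A}--\eqref{B} and integrating by parts yields $A_n(z)$ and $B_n(z)$ as rational functions with poles only at $z=\pm1$; using the symmetry $\alpha_n=0$ one reads the residues off from $(S_1)$, obtains the analogue of the relation $R_n=2n+1+\alpha+\beta$ of Corollary~\ref{Zc} with $\beta=\alpha$, and then feeds $A_n$, $B_n$ and $\sum_{j=0}^{n-1}A_j$ into the master equation \eqref{H} to recover \eqref{MC4}.

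I do not expect a genuine obstacle here, since the argument is essentially a routine limit. The only point requiring care is the bookkeeping of the singularity structure: one must confirm that the $a\to 0$ collision of the three poles at $0$ and $\pm a$ does not leave a spurious double pole at the origin. The cancellations displayed above in both coefficients show that it does not, and this regularization is precisely the content of the corollary.
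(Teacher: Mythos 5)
Your proposal is correct, and its primary route differs from the paper's. The paper's proof is essentially a citation: it observes that at $a=0$ the weight becomes the classical Jacobi weight $(1-x^2)^\alpha$ (the case $\beta=\alpha$) and refers back to the ladder-operator derivation of Corollary \ref{Zc} (and to Chen--Ismail) to identify \eqref{MC4} as the resulting Jacobi differential equation; it never actually performs the $a=0$ substitution in \eqref{MC3}. You do the opposite: your main argument is the direct algebraic specialization of \eqref{MC3} at $a=0$ --- the cancellation $\frac{2z}{z^2}-\frac{2}{z}=0$ in the $P_n'$ coefficient and the cancellation of the common factor $z^2$ in the $P_n$ coefficient --- which verifies the literal claim of the corollary, and you relegate the ladder-operator computation (with $v'(x)=\frac{2\alpha x}{1-x^2}$, symmetry $\alpha_n=0$, and the master equation \eqref{H}) to a consistency check. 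Each route buys something the other does not: your substitution confirms that the stated equation \eqref{MC4} really is the $a=0$ form of \eqref{MC3}, while the paper's route guarantees that \eqref{MC4} is the genuine ODE for polynomials orthogonal with respect to the limiting weight. The latter point matters more than your phrase ``the limit is regular'' suggests, because \eqref{MC3} was derived for $a>0$ from the expansion \eqref{MCR} of $R_n(a)$, whose coefficients contain negative powers of $a$ and degenerate as $a\to 0$; so the formal specialization alone does not establish that \eqref{MC4} governs the $a=0$ orthogonal polynomials. Since you include the ladder-operator check as well, your argument covers both halves and is, if anything, more complete than the paper's.
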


\begin{rem}
When $n$ is large and $a>0$ we have
\begin{align*}
r_n(a)=&-\frac{a n}{a+1}-\frac{a \alpha }{a+1}+\frac{4 \alpha ^2 a^2-a^2+1}{8 a n}+\mathcal{O}\left(\frac{1}{n^{2}}\right),\\
\beta _n(a)=&\frac{1}{4} (a-1)^2-\frac{(a-1)^2 (a+1) \left(a \left(4 \alpha ^2-1\right)-1\right)}{16 a n^2}+\mathcal{O}\left(\frac{1}{n^{3}}\right),\\
\sum_{j=0}^{n-1}R_j(a)=&\frac{a n^2}{1-a^2}+\frac{2 n a \alpha }{1-a^2}+\frac{1-a}{4 a(a+1)}+\frac{(a-1) \left(a^3 \left(4 \alpha ^2-1\right)-1\right)}{16 a^3 n^2}+\mathcal{O}\left(\frac{1}{n^3}\right).
\end{align*}
\end{rem}

\subsection{$x^\alpha{\rm e}^{-x}(A+B\theta(x-t)),\; x\in[0,\infty),\alpha,t>0,A\geq0,A+B\geq0$}
For the weight  $w(x)=(A+B\theta(x-t))x^\alpha{\rm e}^{-x}$ the second order differential equation for $P_n(z,t)$ reads
\begin{align}\label{LC1}
P_n''(z)+Q_n(z,t)P_n'(z) +S_n(z,t)P_n(z)=0,
\end{align}
where
\begin{align*}
Q_n(z,t)&=\frac{\alpha+1}{z}+\frac{1}{z-t}-\frac{1}{z-t+tR_n(t)}-1,\\
S_n(z,t)&=\frac{n}{z}-\frac{t \left[r_n(t)+n R_n(t)\right]}{z (z-t) \left(t R_n(t)-t+z\right)}+\frac{t }{z (z-t)}\sum_{j=0}^{n-1}R_j(t).
\end{align*}
The auxiliary quantities satisfy
 $\sum_{j=0}^{n-1}R_j(t)$ and $r_n(t)$ satisfy
\begin{align}
  r_n(t)&=\frac{tR'_n(t)-(2n+\alpha-t+tR_n(t))R_n(t)}{2},\label{LCr}\\
\sum_{j=0}^{n-1}R_j(t)&=\frac{\beta_n(t)-tr_n(t)-n(n+\alpha)}{t},\label{LCSR}
\end{align}
and
\begin{equation}\label{LCb}
\beta_n(t)=\frac{1}{1-R_n(t)}\left[(2n+\alpha)r_n(t)+n(n+\alpha)+\frac{r_n(t)^2}{R_n(t)}\right].
\end{equation}
Here
\begin{align*}
  &R_n(t):=B\frac{P_n(t,t)^2 t^\alpha{\rm e}^{-t}}{h_n(t)},  \\
 & r_n(t):=B\frac{P_n(t,t)P_{n-1}(t,t) t^\alpha{\rm e}^{-t}}{h_n(t)}
\end{align*}
and $R_n(t)$ satisfies
\begin{align}\label{BLCR2}
  R_n''=&\frac{1}{2}\left(\frac{1}{R_n-1}+\frac{1}{R_n}\right)R_n'{}^2-\frac{R_n'}{t}-\frac{\alpha^2}{2t^2}\frac{R_n}{R_n-1}\notag\\
  &+(2n+\alpha+1)\frac{R_n(R_n-1)}{t}+\frac{R_n(R_n-1)(2R_n-1)}{2}.
\end{align}
See Basor, Chen \cite{BC2009}.

In this paper we consider two cases, Lyu, Chen \cite{LC2} with $A=0$, $B=1$ and Lyu, Chen \cite{LC1} with $A=1, B=-1$ .

\vspace{0.3cm}

{ \bf The case  $A=0$, $B=1$.}
\vspace{0.3cm}

To study the large $n$ behavior of $R_n(t)$, we first recall some results from  \cite{LC2}.

\begin{proposition}\cite{LC2}
The function
$$R(s):=\lim_{n\rightarrow\infty}R_n\left(\frac{s}{4n}\right)$$
 satisfies the following second order differential equation:
$$R''(s)=\left(\frac{1}{R(s)-1}+\frac{1}{R(s)}\right)\frac{R'(s)^2}{2}-\frac{R'(s)}{s}+\frac{R(s)(R(s)-1)}{2s}-\frac{\alpha^2R(s)}{2s^2(R(s)-1)},$$
and it has the following expansion
\begin{align}\label{LCR}
  R(s)=1-\alpha s^{-\frac{1}{2}}&-\frac{\alpha}{8}s^{-\frac{3}{2}}-\frac{\alpha^2}{4}s^{-2}\notag\\
  &-\left(\frac{3\alpha^3}{8}
  +\frac{27\alpha}{128}\right)s^{-\frac{5}{2}}+\mathcal{O}\left(s^{-3}\right),\quad s\rightarrow \infty.
\end{align}
\end{proposition}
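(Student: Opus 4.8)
The plan is to obtain the limiting ODE by a double-scaling limit of the nonlinear equation \eqref{BLCR2} satisfied by $R_n(t)$, and then to extract the large-$s$ expansion by a Puiseux-series ansatz.

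First I would introduce the scaled variable $t=s/(4n)$ and set $\mathcal{R}_n(s):=R_n(s/(4n))$, so that by the chain rule $R_n'(t)=4n\,\mathcal{R}_n'(s)$ and $R_n''(t)=16n^2\,\mathcal{R}_n''(s)$, while $1/t=4n/s$ and $1/t^2=16n^2/s^2$. Substituting these into \eqref{BLCR2} and collecting powers of $n$, one checks that the left-hand side $R_n''$ and the terms $\tfrac12\bigl(\tfrac{1}{R_n-1}+\tfrac{1}{R_n}\bigr)R_n'^2$, $-R_n'/t$, $-\tfrac{\alpha^2}{2t^2}\tfrac{R_n}{R_n-1}$, and $(2n+\alpha+1)\tfrac{R_n(R_n-1)}{t}$ all scale like $n^2$, whereas the remaining term $\tfrac12 R_n(R_n-1)(2R_n-1)$ is only $O(1)$. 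Dividing through by $16n^2$ and letting $n\to\infty$ (with $\mathcal{R}_n\to R$ and its first two derivatives converging), the coefficient $(2n+\alpha+1)/(4ns)\to 1/(2s)$ and the $O(1)$ term drops out, leaving exactly the stated equation for $R(s)$.

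For the asymptotics I would write $R(s)=1+u(s)$ and posit a Puiseux series $u(s)=\sum_{k\ge1}a_k\,s^{-k/2}$. The key observation is that the singular term $-\tfrac{\alpha^2R}{2s^2(R-1)}=-\tfrac{\alpha^2(1+u)}{2s^2u}$ is only $O(s^{-3/2})$ when $u=O(s^{-1/2})$, so it balances the term $\tfrac{R(R-1)}{2s}=\tfrac{u(1+u)}{2s}$ at leading order. The leading balance is algebraic, $\tfrac{u}{2s}-\tfrac{\alpha^2}{2s^2u}=0$, giving $u^2\sim\alpha^2/s$; the branch $R<1$ forces $u\sim-\alpha\,s^{-1/2}$, i.e. $a_1=-\alpha$. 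Substituting the full series into the ODE and matching coefficients at each successive half-power of $s$ then determines $a_2,a_3,\dots$ recursively and yields \eqref{LCR}; one finds in particular $a_2=0$ (so there is no $s^{-1}$ term), $a_3=-\alpha/8$, $a_4=-\alpha^2/4$, and $a_5=-\bigl(3\alpha^3/8+27\alpha/128\bigr)$.

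I expect the main obstacle to be the rigorous justification of the scaling limit, namely that $\mathcal{R}_n(s)$ together with its first two derivatives converges to $R(s)$, so that the formal passage to the limit in \eqref{BLCR2} is legitimate; this analytic point, rather than the algebra, is the real content, and I would invoke the corresponding estimates established in \cite{LC2}. The order-by-order matching in the expansion is routine but delicate, because the $1/u$ singularity couples consecutive orders, so one must carefully track which powers of $s^{-1/2}$ are generated at each step.
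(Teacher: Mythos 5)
Your proposal is correct, but note that for this particular Proposition the paper supplies no proof at all: it is stated with a bare citation to \cite{LC2}, so there is no in-paper argument to compare against. What you propose is precisely the double-scaling methodology the paper uses for its analogous results in Sections 2.1, 3.2, 4.1 and 4.2: rescale the nonlinear equation \eqref{BLCR2} by $t=s/(4n)$, observe that all terms except $\tfrac12 R_n(R_n-1)(2R_n-1)$ scale like $n^2$, divide by $16n^2$ and use $(2n+\alpha+1)/(4n)\to\tfrac12$; this does yield the stated limiting ODE. I also checked your series matching: with $R=1+u$, $u=\sum_{k\ge1}a_ks^{-k/2}$, the leading balance $\tfrac{u}{2s}-\tfrac{\alpha^2}{2s^2u}=0$ gives $a_1=\pm\alpha$, and the successive orders give $a_2=0$, $a_3=a_1/8$, $a_4=-a_1^2/4$, and
\begin{equation*}
a_5=\frac{15a_3}{4}-\frac{5a_3}{8}+\frac{a_1^3}{8}-\frac{3a_3}{2}-a_1a_4+\frac{a_3^2}{2a_1},
\end{equation*}
which with $a_1=-\alpha$ equals $-\bigl(\tfrac{3\alpha^3}{8}+\tfrac{27\alpha}{128}\bigr)$, in agreement with \eqref{LCR}. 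Two soft spots remain, both of which you flag honestly: (i) the formal passage to the limit requires convergence of $R_n$ and its first two derivatives under the scaling, which must be imported from \cite{LC2} (the statement itself presupposes the limit exists); (ii) the branch selection $a_1=-\alpha$ is not determined by the ODE alone, since both signs give formal solutions, so "$R<1$" needs to be grounded in the positivity/monotonicity properties of $R_n(t)=P_n(t,t)^2t^\alpha{\rm e}^{-t}/h_n(t)$ established in \cite{LC2} rather than assumed. Neither point is a defect relative to the paper, which proves nothing here; your write-up is a faithful reconstruction of the omitted argument.
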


%\begin{rem}
%If we didn't change the variable $t$ to $s$, and solved the differential equation of $R_n(t)$ as usual. Sending $n\rightarrow \infty$,  obtained the similar asymptotic expression as \eqref{LCR}.
%\begin{align}\label{LCRt}
 % R(t)=&1-\frac{\alpha}{ 2(nt)^{1/2}}-\frac{\alpha  \left(4 t^2+8 (\alpha +1) t-1\right)}{64(nt)^{3/2}}-\frac{\alpha ^2 \left(4 t^2+1\right)}{64(nt)^{2}}+\mathcal{O}\left(\frac{1}{n^{5/2}}\right).
%\end{align}
%\end{rem}

\begin{theorem}\label{LCT}
Let $n\rightarrow\infty$ and $s=4nt$ be fixed. Then  for large $s$, the polynomials $P_n(x)$ orthogonal with respect to $\theta(x-t)x^\alpha{\rm e}^{-x}$ over $[0,\infty)$ satisfy the double confluent Heun equation
\begin{align}\label{LC3}
P_n''(z)+\Big(\frac{s-\alpha  \sqrt{s}}{4 n z^2}+\frac{\alpha +1}{z}-1\Big)P_n'(z)+\frac{4 n z+(\sqrt{s}-\alpha)^2}{4 z^2}P_n(z)=0.
\end{align}
\end{theorem}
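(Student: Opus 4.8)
The plan is to follow exactly the scheme used in the preceding theorems of this section. First I would eliminate $r_n(t)$, $\beta_n(t)$ and $\sum_{j=0}^{n-1}R_j(t)$ from the coefficients $Q_n(z,t)$ and $S_n(z,t)$ of \eqref{LC1} by means of \eqref{LCr}--\eqref{LCb}, so that both coefficients become rational functions of $z$ whose residues depend only on $R_n(t)$ and $R_n'(t)$. Then I would pass to the double-scaling regime $s=4nt$, $n\to\infty$, using the limit function $R(s)=\lim_{n\to\infty}R_n(s/(4n))$ and its large-$s$ expansion \eqref{LCR}. Since $R_n(t)=R(s)+o(1)$ and, at leading order, $R_n'(t)\approx 4nR'(s)$ so that $tR_n'(t)\approx sR'(s)$, every occurrence of $R_n$, $tR_n$ and $tR_n'$ can be replaced by its limit; the whole task is to retain enough orders in $n$ (and, because the statement is for large $s$, in $s$) that the surviving terms reproduce \eqref{LC3}.

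I would treat the drift coefficient $Q_n$ first, since it is the cleaner of the two. The two displaced poles combine as $\frac{1}{z-t}-\frac{1}{z-t+tR_n(t)}=\frac{tR_n(t)}{(z-t)(z-t+tR_n(t))}$. Because $t=s/(4n)\to 0$ and $t(1-R_n(t))=\frac{s}{4n}(1-R(s))=O(n^{-1})$, the three regular singular points $z=0,\,t,\,t(1-R_n(t))$ of \eqref{LC1} coalesce into a single point $z=0$ as $n\to\infty$; this coalescence is precisely what converts \eqref{LC1} into a double-confluent Heun equation. The denominator above tends to $z^2$, while by \eqref{LCR} the numerator is $tR_n(t)=\frac{s}{4n}R(s)=\frac{s-\alpha\sqrt s}{4n}+\cdots$, so $Q_n(z,t)\to\frac{s-\alpha\sqrt s}{4nz^2}+\frac{\alpha+1}{z}-1$, which is exactly the drift coefficient of \eqref{LC3}.

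The coefficient $S_n$ is where the real work lies. Its first term $n/z$ passes to the limit unchanged and supplies the $\frac{4nz}{4z^2}=n/z$ part of \eqref{LC3}. Using \eqref{LCSR} I would rewrite the last term as $\frac{t}{z(z-t)}\sum_{j=0}^{n-1}R_j(t)=\frac{\beta_n(t)-tr_n(t)-n(n+\alpha)}{z(z-t)}$, whose denominator tends to $z^2$, so that the entire $z^{-2}$ contribution to $S_n$ comes from the finite limit of $\beta_n(t)-tr_n(t)-n(n+\alpha)$; the middle term of $S_n$, whose numerator $t[r_n(t)+nR_n(t)]$ is $O(n^{-1})$ while its denominator tends to $z^3$, drops out. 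The hard part will be the severe cancellations hidden here: \eqref{LCr} shows the leading $O(n)$ piece of $r_n$ is $-nR(s)$ (so the $O(n)$ parts of $r_n$ and $nR_n$ must cancel), \eqref{LCb} contains competing $O(n^2)$ contributions from $(2n+\alpha)r_n$, $n(n+\alpha)$ and $r_n^2/R_n$, and the prefactor $1/(1-R_n)$ in \eqref{LCb} is large in the large-$s$ regime since $1-R(s)=\alpha s^{-1/2}+\cdots$ is small. Carrying \eqref{LCR} to sufficiently many terms and retaining the $O(n^{-1})$ correction to $R_n(t)$, so that $nR_n(t)$ combines correctly with the $O(n)$ and $O(n^2)$ pieces, is the delicate step; once these cancellations are carried out the surviving finite limit is $(\sqrt s-\alpha)^2/4$, giving $S_n(z,t)\to\frac{n}{z}+\frac{(\sqrt s-\alpha)^2}{4z^2}=\frac{4nz+(\sqrt s-\alpha)^2}{4z^2}$.

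Assembling the two limits yields \eqref{LC3}, and matching it to the double-confluent Heun equation recorded in the Introduction identifies the parameters $\widetilde\gamma=\frac{s-\alpha\sqrt s}{4n}$, $\widetilde\delta=\alpha+1$, $\widetilde\epsilon=-1$, $\widetilde\alpha=n$ and $\widetilde q=-(\sqrt s-\alpha)^2/4$. As in the earlier proofs, the consistent truncation is dictated by $P_n'(z)=n\sum_j b_jP_j(z)$: because $P_n'$ carries an extra factor of $n$ relative to $P_n$, I would keep $Q_n$ through order $n^{-1}$ and $S_n$ through order $n^0$, so that the discarded remainders of the two coefficients contribute at the same, negligible order in the differential equation.
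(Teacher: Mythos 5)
Your proposal is correct and takes essentially the same approach as the paper: substitute \eqref{LCr}--\eqref{LCb} into \eqref{LC1}, pass to the double-scaling limit $s=4nt$, $n\to\infty$ using the large-$s$ expansion \eqref{LCR}, and identify the limiting coefficients with the double confluent Heun parameters $\widetilde{\gamma}=\frac{s-\alpha\sqrt{s}}{4n}$, $\widetilde{\delta}=\alpha+1$, $\widetilde{\epsilon}=-1$, $\widetilde{\alpha}=n$, $\widetilde{q}=-\frac{(\sqrt{s}-\alpha)^2}{4}$. The paper merely states the resulting limits of $Q_n$ and $S_n$, whereas you supply the intermediate work --- the coalescence of the poles $z=0,\,t,\,t(1-R_n)$, the $\mathcal{O}(n^{-1})$ suppression of the middle term of $S_n$, and the cancellations giving $\beta_n(t)-tr_n(t)-n(n+\alpha)\to\frac{(\sqrt{s}-\alpha)^2}{4}$ --- all of which is consistent with the paper's computation.
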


\begin{proof}
Substituting \eqref{LCr}--\eqref{LCb} into \eqref{LC1}, sending $n\rightarrow\infty$ and combining with \eqref{LCR}, for large $s$, we have
\begin{align*}
 \mathbb{ Q}_n(z,s) &=\frac{s-\alpha  \sqrt{s}}{4 n z^2}+\frac{\alpha +1}{z}-1 +\mathcal{O}\left(s^{-1/2}\right),\quad s\rightarrow \infty,\\
  \mathbb{S}_n(z,s) &=\frac{4 n z+(\sqrt{s}-\alpha)^2}{4 z^2}+\mathcal{O}\left(s^{-1/2}\right),\quad s\rightarrow \infty.
\end{align*}
Then \eqref{LC1} is a double confluent Heun equation with parameters
$$\widetilde{\gamma}=\frac{s-\alpha\sqrt s}{4n},\quad \widetilde{\delta}=\alpha+1,\quad \widetilde{\epsilon}=-1,\quad \widetilde{a}=n,\quad \widetilde{q}=-\frac{(\sqrt{s}-\alpha)^2}{4}.$$

\end{proof}

%\begin{corollary}
%The double confluent Heun equation \eqref{LC3} reduced to Laguerre differential equation, since $w(x,0)=x^\alpha{\rm e}^{-x}$.
%\end{corollary}
%\begin{proof}
%See the proof of {\bf Corollary \ref{ECco}}.
%\end{proof}
\begin{rem}For large $s$ we have
\begin{align}\label{}
  r_n(s/(4n))&=-n-\frac{\alpha }{2}+\frac{2\alpha ^2+4n \alpha+\alpha}{4\sqrt s}+\frac{\alpha   (2 \alpha +4 n+3)}{32s^{3/2}}\notag\\&~~~~~~~~~~~~~~~~~~~~~~~~~~~~~~+\frac{\alpha ^2 (\alpha +2 n+2)}{8 s^2}+\mathcal{O}\left(s^{-5/2}\right) ,\label{LCr1}\\
 \beta_n(s/(4n))&=n(n+\alpha )+\frac{\alpha ^2}{4}-\frac{\alpha \sqrt s }{4}+\frac{3\alpha}{32\sqrt s} +\frac{\alpha ^2}{8 s}   \notag\\&~~~~~~~~~~~~~~~~~~~~~~~~~~~~~~~~+\frac{5\alpha(16 \alpha ^2+9) }{512s^{3/2}}+\mathcal{O}\left(s^{-3/2}\right),\label{LCb1}\\
 \sum_{j=0}^{n-1}\mathbb{R}_j(s/(4n))&=n+\frac{\alpha}{2}- \frac{\alpha(4 n+\alpha)}{2\sqrt s}+\frac{\alpha ^2 n}{s}+\frac{\alpha(4n- \alpha)}{16s^{3/2}}\notag\\
 &~~~~~~~~~~~~~~~~~~~~~~~~~~~~~~~~~~~+\frac{\alpha^2(2n-\alpha)}{8s^2}+\mathcal{O}\left(s^{-5/2}\right).\label{LCSR1}
\end{align}
\end{rem}

\vspace{0.3cm}

{ \bf The case   $A=1$, $B=-1$.}
\vspace{0.3cm}

\begin{proposition}
As $n\rightarrow\infty$, the quantity $R_n(t)$ has the following asymptotic expression:
\begin{equation}\label{1LCR}
 R_n(t)=-\frac{2 n}{t}+\frac{t-2 (\alpha +1)}{2 t}+\frac{\alpha ^2}{8 n^2}+\frac{-4 \alpha ^3-2 \alpha ^2 (t+2)+t}{32 n^3}+\mathcal{O}\left(n^{-4}\right).
\end{equation}

\begin{proof}
Neglecting the derivative terms in \eqref{BLCR2} and replacing $R_n(t)$ by $ \widetilde{R}_n(t)$, we obtain
\begin{align*}
 2 t^2 \widetilde{R}_n(t)^3+t (2 \alpha +4 n-5 t+2) \widetilde{R}_n(t)^2&-4 t (2n+\alpha +1-t)\widetilde{R}_n(t)\\&-\alpha ^2+2 t (\alpha +2 n+1)-t^2=0.
\end{align*}
The solution to the equation above when $n\rightarrow \infty$ is
\begin{align*}
\widetilde{R}_{n1}(t)&=-\frac{2 n}{t}+\frac{t-2 (\alpha +1)}{2 t}+\frac{\alpha ^2}{8 n^2}-\frac{\alpha ^2 (2 \alpha +t+2)}{16 n^3}+\mathcal{O}\left(n^{-4}\right),\\
\widetilde{R}_{n2}(t)&=1\pm\frac{\alpha  }{2 \sqrt{nt}}\mp\frac{ \alpha  (2 \alpha +t+2)}{16 \sqrt{t}n^{3/2}}-\frac{\alpha ^2}{16n^2}+\mathcal{O}\left(n^{-{5/2}}\right).
\end{align*}
Since  $R_n(t)=-P_n(t,t)^2 t^\alpha{\rm e}^{-t}/h_n(t)<0 $ we assume that $R_n(t)$ has the following expression:
$$R_n(t)=\sum _{j=0}^{\infty}a_jn^{1-j},\quad n\rightarrow\infty.$$
Substituting the expression above into \eqref{BLCR2} we obtain \eqref{1LCR}.
\end{proof}

\end{proposition}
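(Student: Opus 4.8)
The plan is to read off the large-$n$ asymptotics directly from the nonlinear second-order equation \eqref{BLCR2} for $R_n(t)$ (Basor--Chen), using the same non-derivative reduction that produced the earlier expansions in this paper. First I would locate the leading-order behaviour by discarding the $R_n''$, $R_n'$ and $R_n'^2$ terms in \eqref{BLCR2}; after multiplying through by $2t^2(R_n-1)$ and cancelling the overall factor $R_n$ (which discards a spurious root $R_n=0$), the equation collapses to the cubic
\begin{equation*}
2 t^2 \widetilde{R}_n^3+t (2\alpha +4n-5 t+2) \widetilde{R}_n^2-4 t (2n+\alpha +1-t)\widetilde{R}_n-\alpha ^2+2 t (\alpha +2n +1)-t^2=0.
\end{equation*}
For $n\to\infty$ this cubic has one root growing linearly in $n$ and two roots clustering near $\widetilde R_n=1$; balancing the two highest-degree terms $2t^2\widetilde R_n^3$ and $4tn\,\widetilde R_n^2$ forces the growing root to behave like $-2n/t$.

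Next I would fix the correct branch by a sign argument. The two roots near $1$ expand in \emph{half}-integer powers of $n$ (the branch $\widetilde R_{n2}=1\pm \alpha/(2\sqrt{nt})+\cdots$), whereas the linearly growing root admits an expansion in integer powers $n^{1-j}$. Since the definition $R_n(t)=-P_n(t,t)^2 t^\alpha \mathrm{e}^{-t}/h_n(t)$ forces $R_n(t)<0$ (the minus sign coming from $B=-1$), the two positive branches near $1$ are excluded and the unique negative, linearly growing branch is selected. This both justifies and fixes the ansatz
\begin{equation*}
R_n(t)=\sum_{j=0}^{\infty}a_j(t)\,n^{1-j},\qquad n\to\infty,
\end{equation*}
with $a_0(t)=-2/t$.

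The substantive step is then to insert this ansatz into the \emph{full} equation \eqref{BLCR2}, treating $R_n'$ and $R_n''$ as $t$-derivatives of the series (so $R_n'=\sum_j a_j'(t)\,n^{1-j}$, and likewise for $R_n''$), and to equate coefficients of successive powers of $n$. The dominant balance at order $n^3$ comes solely from the two algebraic terms $(2n+\alpha+1)R_n(R_n-1)/t$ and $R_n(R_n-1)(2R_n-1)/2$, reconfirming $a_0=-2/t$; the next orders recursively determine $a_1(t)=(t-2(\alpha+1))/(2t)$, then $a_2(t)=0$ (explaining the missing $n^{-1}$ term in \eqref{1LCR}), and so on, yielding the stated expansion.

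The step I expect to be the main obstacle — and the reason the purely algebraic reduction is not sufficient — is the bookkeeping of the derivative terms at order $n^{-3}$. The cubic root $\widetilde R_{n1}$ already supplies the correct coefficients through order $n^{-2}$ (in particular $\alpha^2/(8n^2)$), but its $n^{-3}$ coefficient equals $-\alpha^2(2\alpha+t+2)/16=(-4\alpha^3-2\alpha^2(t+2))/32$, which differs from the true value in \eqref{1LCR} exactly by the extra $+t/32$. That discrepancy is produced by the $-R_n'/t$, $R_n''$ and $R_n'^2$ contributions in \eqref{BLCR2}, which first enter the recursion at this order. Carefully tracking those derivative contributions, rather than trusting the algebraic approximation beyond $n^{-2}$, is the only delicate point; the remaining coefficient matching is routine.
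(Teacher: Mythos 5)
Your proposal is correct and follows essentially the same route as the paper: neglect the derivative terms in \eqref{BLCR2} to get the cubic, identify the linearly growing root versus the two roots near $1$ (which expand in half-integer powers), use the sign condition $R_n(t)=-P_n(t,t)^2t^\alpha{\rm e}^{-t}/h_n(t)<0$ to select the branch and justify the ansatz $R_n(t)=\sum_j a_j(t)n^{1-j}$, then substitute this ansatz into the full equation and match powers of $n$. Your closing observation — that the cubic root is trustworthy only through order $n^{-2}$ and that the extra $t/32$ in the $n^{-3}$ coefficient comes from the $R_n''$, $R_n'/t$ and $R_n'^2$ terms (whose contributions in fact cancel at the orders determining $a_2$ and $a_3$, first surviving at the order determining the $n^{-3}$ coefficient) — is accurate and makes explicit a subtlety the paper's proof leaves implicit.
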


\begin{theorem}
Sending $n$ to infinity, the polynomials $\widetilde{P}_n(u):=P_n(tu)$  satisfy the confluent Heun equation
\begin{align}\label{2LC3}
\widetilde{P}_n''(u)+\Big(\frac{\alpha +1}{u}+\frac{1}{u-1}-t\Big)\widetilde{P}_n'(u)+\frac{ntu- n( n+\alpha+1+t/2 )}{u (u-1) }\widetilde{P}_n(u)=0.
\end{align}
Here $P_n(x)$ are orthogonal with respect to $(1-\theta(x-t))x^\alpha{\rm e}^{-x}$ over $[0,\infty)$.
\end{theorem}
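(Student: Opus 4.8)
The plan is to follow exactly the template used for the three earlier confluent‑Heun theorems: start from the finite‑$n$ Fuchsian equation \eqref{LC1}, express its coefficients $Q_n(z,t)$ and $S_n(z,t)$ solely through $R_n(t)$ and $R_n'(t)$ by substituting \eqref{LCr}, \eqref{LCSR} and \eqref{LCb}, then insert the large‑$n$ expansion \eqref{1LCR} for $R_n(t)$ together with its termwise derivative, and finally let $n\to\infty$. First I would record the elementary but essential consequence of \eqref{1LCR}, namely $tR_n(t)=-2n+\tfrac{t}{2}-\alpha-1+\mathcal{O}(n^{-2})$, so that the ``deformation'' singularity of \eqref{LC1} sits at $z=t-tR_n(t)\sim 2n$ and recedes to $+\infty$. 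Consequently the extra pole term $-\tfrac{1}{z-t+tR_n(t)}$ in $Q_n$ equals $\tfrac{1}{2n}+\mathcal{O}(n^{-2})\to0$, leaving
\[
Q_n(z,t)\longrightarrow \frac{\alpha+1}{z}+\frac{1}{z-t}-1 .
\]

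The harder step is the non‑derivative coefficient $S_n$. Here I would propagate \eqref{1LCR} through \eqref{LCr} to obtain $r_n(t)=-\tfrac{n}{2}+\tfrac{t}{8}-\tfrac{\alpha}{4}+\mathcal{O}(n^{-1})$, where the crucial point is the cancellation of the $\mathcal{O}(n)$ contributions $2n/t$ arising from $tR_n'$ and from $(2n+\alpha-t+tR_n)R_n$. Feeding $r_n$ and $R_n$ into \eqref{LCb}, the leading $\mathcal{O}(n^2)$ pieces $(2n+\alpha)r_n$ and $n(n+\alpha)$ cancel, so the bracket is only $\mathcal{O}(n)$ and $\beta_n(t)\to t^2/16$; this reflects that $x^\alpha e^{-x}(1-\theta(x-t))$ is effectively supported on $[0,t]$, an interval of length $t$, whence $\beta_n\sim(t/4)^2$. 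Then \eqref{LCSR} yields $t\sum_{j=0}^{n-1}R_j(t)=-n^2-n(\alpha-t/2)+\mathcal{O}(1)$. Separately I would expand the remaining rational term of $S_n$, using $\tfrac{1}{z-t+tR_n}=-\tfrac{1}{2n}-\tfrac{z-t/2-\alpha-1}{4n^2}+\mathcal{O}(n^{-3})$ together with $t[r_n+nR_n]=-2n^2-n(\alpha+1)+\mathcal{O}(1)$, to get
\[
-\frac{t\,[r_n(t)+nR_n(t)]}{z-t+tR_n(t)}=-n-\frac{z}{2}+\frac{t}{4}+\mathcal{O}(n^{-1}).
\]

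Combining these, and writing $\tfrac{n}{z}=\tfrac{n(z-t)}{z(z-t)}$ over the common denominator $z(z-t)$, the $\mathcal{O}(n^2)$ and $\mathcal{O}(n)$ parts assemble into
\[
S_n(z,t)=\frac{nz-n\bigl(n+\alpha+1+t/2\bigr)}{z(z-t)}+\frac{-z/2+\mathrm{const}}{z(z-t)}+\mathcal{O}(n^{-1}).
\]
As already noted in the proof of the first theorem, since $P_n'(z)=n\sum_{j=0}^{n-1}b_jP_j(z)$ carries an extra factor $n$, the consistent truncation keeps $Q_n$ to $\mathcal{O}(1)$ and $S_n$ to $\mathcal{O}(n)$; discarding the $\mathcal{O}(1)$ remainder $(-z/2+\mathrm{const})/(z(z-t))$ leaves the limiting equation
\[
P_n''(z)+\Bigl(\frac{\alpha+1}{z}+\frac{1}{z-t}-1\Bigr)P_n'(z)+\frac{nz-n(n+\alpha+1+t/2)}{z(z-t)}P_n(z)=0 .
\]

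Finally I would apply the affine change of variables $z=tu$, $\widetilde{P}_n(u):=P_n(tu)$, so that $P_n'(z)=t^{-1}\widetilde{P}_n'(u)$ and $P_n''(z)=t^{-2}\widetilde{P}_n''(u)$; multiplying the limiting equation by $t^2$ sends the drift coefficient to $\tfrac{\alpha+1}{u}+\tfrac{1}{u-1}-t$ and the potential to $\tfrac{ntu-n(n+\alpha+1+t/2)}{u(u-1)}$, which is exactly \eqref{2LC3}, with the confluent‑Heun parameters $\widetilde{\gamma}=\alpha+1$, $\widetilde{\delta}=1$, $\widetilde{\epsilon}=-t$, $\widetilde{\alpha}=nt$ and $\widetilde{q}=n(n+\alpha+1+t/2)$. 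The main obstacle is the second paragraph: one must carry the expansions of $r_n$, $\beta_n$ and $\sum R_j$ to sufficiently high order and confirm the twofold cancellation of the leading $\mathcal{O}(n^2)$ and $\mathcal{O}(n)$ terms, since it is only after these cancellations that the surviving coefficients acquire the clean Heun form and that $\beta_n$ settles to the bounded‑support value $t^2/16$.
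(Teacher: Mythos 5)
Your proposal is correct and takes essentially the same route as the paper: the paper's proof is precisely the one-line recipe of substituting \eqref{LCr}--\eqref{LCb} into \eqref{LC1}, inserting the expansion \eqref{1LCR}, sending $n\to\infty$, and setting $\widetilde{P}_n(u)=P_n(tu)$, which you have carried out in full detail. Your intermediate expansions $r_n(t)=-\tfrac{n}{2}+\tfrac{t-2\alpha}{8}+\mathcal{O}(n^{-1})$, $\beta_n(t)\to t^2/16$ and $t\sum_{j=0}^{n-1}R_j(t)=-n^2-n(\alpha-t/2)+\mathcal{O}(1)$ agree exactly with the remark the paper places after the theorem, so you have simply made explicit the cancellations the paper leaves implicit.
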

\begin{proof}
Substituting \eqref{LCr}--\eqref{LCb} into \eqref{LC1}, sending $n\rightarrow\infty$, combining with \eqref{1LCR}  and setting  $\widetilde{P}_n(u)=P_n(tu)$ we obtain the confluent Heun equation with parameters
$$\widetilde{\gamma}=\alpha+1,\quad \widetilde{\delta}=1,\quad \widetilde{\epsilon}=-t,\quad \widetilde{a}=nt,\quad \widetilde{q}=n( n+\alpha+1+t/2 ).$$
\end{proof}

\begin{rem}For large $n$ we have
\begin{align*}
  r_n(t)&=-\frac{n}{2}+\frac{t-2 \alpha}{8}+\frac{\alpha ^2}{8 n}+\frac{t-2 \alpha ^2(\alpha+t)}{32 n^2}+\mathcal{O}\left(n^{-3}\right) ,\\
 \beta_n(t)&=\frac{t^2}{16}+\frac{t^2(1-2 \alpha ^2)}{64 n^2}+\mathcal{O}\left(n^{-3}\right),\\
 \sum_{j=0}^{n-1}\mathbb{R}_j(t)&=-\frac{n^2}{t}+n \left(\frac{1}{2}-\frac{\alpha }{t}\right)+\frac{\alpha }{4}-\frac{t}{16}-\frac{\alpha ^2}{8 n}+\frac{4 \alpha ^3+2 \alpha ^2 t-t}{64 n^2}+\mathcal{O}\left(n^{-3}\right).
\end{align*}
\end{rem}

\section{Conclusion}
In this paper we considered the eight kinds of weight functions for monic orthogonal  polynomials $P_n(x)$. These polynomials satisfy linear second order differential equations and we showed that they reduce to Heun equations as $n\to\infty$. In this way we obtained six confluent Heun equations, three double confluent Heun equations and a general Heun equation.

\begin{rem}
For the deformed Freud  weight we will obtain the biconfluent Heun equation
\begin{equation}
\frac{d^2u}{dz^2}+\left(\frac{\gamma}{z}+\delta+\epsilon z\right)\frac{du}{dz}+\left(\frac{\alpha z-q}{z}\right)u=0.
\end{equation}
See the work of Clarkson and Jordaan \cite{Cl2018} for the deformed Freud weight $$|x|^{2\lambda+1}{\rm e}^{-x^4+tx^2},\; \lambda>-1,\;x\in\mathbb{R}.$$ The  biconfluent Heun equation was obtained in \cite[p. 165]{Cl2018} with parameters
$$ \gamma=1+\lambda,\quad \delta=\frac{\sqrt 2 t}{2},\quad \epsilon=-1,\quad \alpha=0,\quad q=-\frac{\sqrt6 n^{3/2}}{9}.$$
Also see Zhu and Chen \cite{ZC2019} for $$|x|^\alpha{\rm e}^{-N[x^2+s(x^4-x^2)]},\quad x\in\mathbb{R},$$ where the  biconfluent Heun equation \cite[Eq 6.22]{ZC2019} was obtained    with parameters
$$ \gamma=-\frac{\alpha+1}{2},\quad \delta=\frac{\sqrt 2 N(1-s)}{2},\quad \epsilon=1,\quad \alpha=0,\quad q=-\frac{\sqrt6 k^{3/2}}{9}.$$

At present we do not know the examples of weights that would lead to the  triconfluent Heun equation.
\end{rem}

\section{ Appendix}
\subsection{Integral identities}

Using the Coulomb fluid method requires numerous integral formulas. Here we list some integrals used in main text, which  can be found in \cite{CM2012}. For case of  $0<a<b$  we have
\begin{align}
  \int_a^b\frac{{\rm d}x}{\sqrt{(b-x)(x-a)}} &=\pi, \label{int1}\\
 \int_a^b\frac{x {\rm d}x}{\sqrt{(b-x)(x-a)}} &=\frac{a+b}{2}\pi, \label{int2}\\
 \int_a^b\frac{{\rm d}x}{x\sqrt{(b-x)(x-a)}} &=\frac{\pi}{\sqrt{ab}}, \label{int3}\\
 \int_a^b\frac{{\rm d}x}{x^2\sqrt{(b-x)(x-a)}}x &=\frac{a+b}{2(ab)^{3/2}}\pi,\label{int4}\\
 \int_a^b\frac{{\rm d}x}{(x+t)\sqrt{(b-x)(x-a)}} &=\frac{\pi}{\sqrt{(b+t)(t+a)}}.\label{int5}
\end{align}

\section{Acknowledgements}

\noindent
 L. Zhan, Y. Chen would like to thank the Science and Technology Development Fund of the Macau SAR for generous support in providing FDCT 130/2014/A3 and FDCT 023/2017/A1. We would also like to thank the University of Macau for generous support via MYRG 2014-00011 FST, MYRG 2014-00004 FST and MYRG 2018-00125 FST.

 G. Filipuk  acknowledges the support of Alexander von Humboldt Foundation.   The support of National Science Center (Poland) via NCN OPUS grant $2017/25/B/BST1/00931$ is acknowledged.

\end{document}